\documentclass{amsart} 

\usepackage[english]{babel}
\usepackage[latin1]{inputenc}

\usepackage{amsmath}
\usepackage{amsthm}
\usepackage{amssymb}
\usepackage{tikz}

\usepackage{imakeidx} 
\usepackage{appendix}
\usepackage{tikz-cd}
\usepackage{verbatim}
\usepackage{enumitem}
\usepackage{multicol}

\usepackage[backref=page]{hyperref}
\usepackage{cleveref}

\hypersetup{colorlinks=true,linkcolor=blue,anchorcolor=blue,citecolor=blue}

\usepackage{adjustbox}

\usepackage{mathtools}

\newtheorem{thm}{Theorem}[section]
\newtheorem{prop}[thm]{Proposition}
\newtheorem{lemma}[thm]{Lemma}
\newtheorem{cor}[thm]{Corollary}
\newtheorem{conj}[thm]{Conjecture}
\newtheorem{question}[thm]{Question}

\theoremstyle{definition}

\newtheorem{example}[thm]{Example}
\theoremstyle{remark}
\newtheorem{rmk}[thm]{Remark}
\numberwithin{equation}{section}

\newcommand{\Q}{\mathbb Q}
\newcommand{\F}{\mathbb F}

\newcommand{\Z}{\mathbb Z}
\newcommand{\G}{\mathbb G}

\renewcommand{\P}{\mathbb P}
\newcommand{\Spec}{\operatorname{Spec}}

\newcommand{\A}{\mathbb A}

\newcommand{\cl}{\overline}
\newcommand{\set}[1]{\left\{#1\right\}}
\renewcommand{\phi}{\varphi}

\newcommand{\on}[1]{\operatorname{#1}}
\newcommand{\ang}[1]{\left \langle{#1}\right \rangle}

\newcommand\smallbullet{%
    \raisebox{-0.25ex}{\scalebox{1.2}{$\cdot$}}%
}

\title[Degenerate fourfold Massey products over arbitrary fields]{Degenerate fourfold Massey products over arbitrary fields}

\address{Department of Mathematics\\
	University of California\\
	Los Angeles, CA 90095 \\United States of America}

\author{Alexander Merkurjev}
\email{merkurjev@math.ucla.edu}

\author{Federico Scavia}
\email{scavia@math.ucla.edu}

\thanks{The first author
was supported by the NSF grant DMS \#1801530.}

\makeatletter
\@namedef{subjclassname@2020}{\textup{2020} Mathematics Subject Classification}
\makeatother

\date{October 2023}

\subjclass[2020]{12G05; 55S30, 11E04}

\begin{document}

	\begin{abstract}
We prove that, for all fields $F$ of characteristic different from $2$ and all $a,b,c\in F^\times$, the mod $2$ Massey product $\ang{a,b,c,a}$ vanishes as soon as it is defined. For every field $F_0$, we construct a field $F$ containing $F_0$ and $a,b,c,d\in F^\times$ such that $\ang{a,b,c}$ and $\ang{b,c,d}$ vanish but $\ang{a,b,c,d}$ is not defined. As a consequence, we answer a question of Positselski by constructing the first examples of fields containing all roots of unity and such that the mod $2$ cochain DGA of the absolute Galois group is not formal.
	\end{abstract}
	
		\maketitle
	
	\section{Introduction}
	
	A fundamental and difficult problem in Galois theory is to characterize those profinite groups which are realizable as absolute Galois groups of fields. The first result in this direction is due to Artin and Schreier, who proved that the only non-trivial finite subgroups of an absolute Galois group are cyclic of order $2$. The structure of absolute Galois groups of general fields has been investigated by a large number of authors. The highlights of this theory are the study of projective profinite groups, pseudo-algebraically closed fields, the $p$-descending series and the $p$-Zassenhaus series, where $p$ is a prime number.
	
	Another approach to this problem is to find constraints on the cohomology of absolute Galois groups. The most spectacular development in this direction is the proof of the Bloch--Kato Conjecture by Rost and Voevodsky. This gives very strong restrictions on the mod $p$ cohomology of the absolute Galois group of a field of characteristic not $p$ and containing a primitive $p$-th root of unity: it admits a presentation with generators in degree $1$ and relations in degree $2$. 
 
    \smallskip
	
	Let $p$ be a prime number, let $F$ be a field, and let $\Gamma_F$ be the absolute Galois group of $F$. For all $n\geq 3$ and all $\chi_1,\dots,\chi_n\in H^1(F,\Z/p\Z )$, we denote by $\ang{\chi_1,\dots,\chi_n}\subset H^2(F,\Z/p\Z )$ the Massey product of $\chi_1,\dots,\chi_n$; see \Cref{preliminaries} for the definition. We say that $\ang{\chi_1,\dots,\chi_n}$ is defined if it is non-empty, and that it vanishes if it contains $0$. In \cite{hopkins2015splitting}, Hopkins and Wickelgren proved that all defined Massey products vanish when $n=3$, $p=2$ and $F$ is a number field. In \cite{minac2017triple}, Min\'{a}\v{c} and T\^{a}n generalized this result to the case when the field $F$ is arbitrary, and formulated the following conjecture.
 
	\begin{conj}[Min\'{a}\v{c}, T\^{a}n]\label{massey-conj}
		For every field $F$, every prime $p$, every $n\geq 3$ and all $\chi_1,\dots,\chi_n\in H^1(F,\Z/p\Z )$, if the Massey product $\ang{\chi_1,\dots,\chi_n}\subset H^2(F,\Z/p\Z )$ is defined, then it vanishes.
	\end{conj}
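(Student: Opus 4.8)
The plan is to translate the conjecture into a Galois-theoretic embedding problem via Dwyer's theorem and then to attack that embedding problem by a d\'evissage along a tower of central extensions, using the Bloch--Kato theorem to control the obstructions. Write $U_{n+1}$ for the group of upper unitriangular $(n+1)\times(n+1)$ matrices over $\F_p$, let $Z\cong\F_p$ be its center (the matrices differing from the identity only in the upper-right corner), and set $\bar U_{n+1}=U_{n+1}/Z$. Dwyer's theorem identifies ``$\ang{\chi_1,\dots,\chi_n}$ is defined'' with the existence of a continuous homomorphism $\bar\rho\colon\Gamma_F\to\bar U_{n+1}$ whose superdiagonal entries are $\chi_1,\dots,\chi_n$, and ``$\ang{\chi_1,\dots,\chi_n}$ vanishes'' with the existence of a lift of $\bar\rho$ along the central extension
\[
1\to\F_p\to U_{n+1}\to\bar U_{n+1}\to 1.
\]
The conjecture thus becomes the assertion that every such $\bar\rho$ lifts; the obstruction lives in $H^2(F,\F_p)$ and is a representative of the Massey product.

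First I would set up the inductive step. The group $U_{n+1}$ is filtered by its upper central series, giving a chain of central extensions with kernel $\F_p$, and I would try to build the lift $\rho$ one layer at a time, starting from the representation recording the cup products $\chi_i\cup\chi_{i+1}$. At each layer the obstruction to extending a partial homomorphism is a class in $H^2(F,\F_p)$ assembled from cup products of the $\chi_i$; the hypothesis that the product is defined forces the bottom obstructions $\chi_i\cup\chi_{i+1}=0$ to vanish, and the goal is to show that the freedom to twist a partial lift by cocycles valued in the lower central terms can always annihilate the next obstruction. The base case $n=3$, valid for all fields and all $p$ as recalled above, is the mechanism I would hope to iterate: there, defined-ness means $\chi_1\cup\chi_2=\chi_2\cup\chi_3=0$, and by the norm residue isomorphism one writes these vanishing classes as sums of symbols, splits them over explicit Kummer extensions, and produces the defining system by a norm argument.

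The Bloch--Kato theorem is what makes the obstructions tractable in principle: every relevant degree-$2$ class is a sum of symbols $(a)\cup(b)$, hence split by the cyclic extensions $F(\sqrt[p]{a})/F$. The strategy is to split the obstruction after a suitable finite extension, solve the embedding problem upstairs, and descend while controlling the descent by a corestriction argument, thereby reducing each inductive step to the solvability of a system of norm equations read off from the symbols appearing in the obstruction.

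\emph{The main obstacle is the inductive step itself.} For $n=3$ a single symbol condition governs the obstruction and is met by elementary field arithmetic; over number fields, Harpaz and Wittenberg treat all $n$ by combining Poitou--Tate duality with a rational-points argument that guarantees the norm equations are solvable. Over an arbitrary field neither ingredient is available: the higher lifting obstruction is a genuine secondary operation whose vanishing does not follow formally from the defined-ness hypothesis, and there is no local-global principle to fall back on. Showing that the tower built from $U_{n+1}$ can always be climbed once the bottom cup products vanish, uniformly in $F$, $p$ and $n$, is the crux --- and is precisely why the full conjecture remains open beyond the known partial cases, such as the degenerate fourfold products established in the present paper.
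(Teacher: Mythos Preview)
The statement you are attempting is \Cref{massey-conj}, which is an \emph{open conjecture}; the paper does not claim to prove it and contains no proof to compare against. What the paper actually establishes are the special cases in Theorems~\ref{massey-deg} and~\ref{massey-double-deg} (the degenerate fourfold products $\ang{a,b,c,a}$ and $\ang{bc,b,c,bc}$ for $p=2$), and you yourself acknowledge in the final paragraph that your outline does not close: the ``inductive step'' is precisely the open problem.

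Beyond the honest admission that the argument is incomplete, there are two concrete issues with the strategy as written. First, you frame the problem as climbing the upper central series of $U_{n+1}$ ``one layer at a time, starting from the representation recording the cup products.'' But the hypothesis of the conjecture already hands you a lift $\bar\rho\colon\Gamma_F\to\bar U_{n+1}$; only the \emph{last} central step remains. The content of the conjecture is not that the tower can be climbed from scratch once the adjacent cup products vanish --- indeed, \Cref{counter-quadrelli} in this paper shows that this stronger statement is \emph{false} already for $n=4$ and $p=2$, even over fields containing an algebraically closed subfield. The content is rather that the given $\bar\rho$ can be modified (equivalently, the defining system can be changed) so that the single remaining obstruction in $H^2(F,\F_p)$ vanishes. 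Your d\'evissage picture blurs this distinction.

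Second, the proposed descent mechanism --- split the obstruction over a finite extension, solve there, and corestrict --- does not work as stated: corestriction is a map on cohomology, not a construction on homomorphisms $\Gamma_E\to U_{n+1}$, so ``solving upstairs'' does not produce a lift downstairs. The paper's proofs of the degenerate cases proceed quite differently: they translate both ``defined'' and ``vanishes'' into explicit norm conditions on auxiliary scalars (\Cref{lambda-criterion}) and then prove the equivalence of those conditions via identities in $\on{Br}(F)$ (\Cref{q-t-s}), with the Albert form of a biquaternion algebra supplying the key existence step (\Cref{albert}). None of this is visible in your outline, and there is no known way to promote your sketch to a proof of the general conjecture.
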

    One says that $F$ has the Strong Massey Vanishing property if \Cref{massey-conj} has a positive answer for $F$; see \cite[Definition 1.2]{pal2018strong}.
 
	It was observed by Min\'{a}\v{c} and T\^{a}n that all mod $p$ Massey products are defined and vanish when $F$ has characteristic $p$; see \cite[Remark 4.1]{minac2017triple}. Therefore, for the purpose of proving \Cref{massey-conj} one may assume that $\on{char}(F)\neq p$. Moreover, if $F$ contains a primitive $p$-th root of unity (this is automatic if $p=2$), then by Kummer theory homomorphisms $\Gamma_F\to \Z/p\Z $ correspond to elements of $F^\times/F^{\times p}$, and hence we may talk about Massey products of elements of $F^\times$. In this case, \Cref{massey-conj} is equivalent to the prediction that for all $a_1,\dots,a_n\in F^\times$, the Massey product $\ang{a_1,\dots,a_n}$ vanishes as soon as it is defined.
	
	\Cref{massey-conj} has stimulated a large number of works in recent years. It is now known to be true when $n=3$, for all fields $F$ and primes $p$, by work of Efrat and Matzri and of Min\'{a}\v{c} and T\^{a}n \cite{matzri2014triple, efrat2017triple, minac2016triple}. \Cref{massey-conj} is also known when $F$ is a number field, by Guillot, Min\'{a}\v{c}, Topaz and Wittenberg \cite{guillot2018fourfold} in the case $p=2$ and $n=4$, and Harpaz and Wittenberg \cite{harpaz2019massey} in the case of arbitrary $p$ and $n\geq 3$. 
	
	If $n\geq 3$ and $\ang{\chi_1,\dots,\chi_n}\subset H^2(F,\Z/p\Z)$ is defined, then $\chi_i\cup\chi_{i+1}=0$ for all $i=1,\dots,n-1$; see \Cref{cup-products-zero}. In \cite[Question 4.2]{minac2017counting}, Min\'{a}\v{c} and T\^{a}n asked whether the converse is also true.
	
	\begin{question}[Min\'{a}\v{c}, T\^{a}n]\label{strong-massey}
		Let $F$ be a field, let $p$ be a prime number and let $n\geq 3$ be an integer. Is it true that for all $\chi_1,\dots,\chi_n\in H^1(F,\Z/p\Z )$ such that $\chi_i\cup\chi_{i+1}=0$ in $H^2(F,\Z/p\Z )$ for all $i=1,\dots,n-1$, the Massey product $\ang{\chi_1,\dots,\chi_n}$ is defined?
	\end{question}
	
	Again, if $F$ contains a primitive $p$-th root of unity, by Kummer theory in \Cref{strong-massey} we may replace elements of $H^1(F,\Z/p\Z)$ by elements of $F^\times$. \Cref{strong-massey} has affirmative answer when $n=3$. Harpaz and Wittenberg produced a counterexample to \Cref{strong-massey}, for $n=4$, $p=2$ and $F=\Q$; see \cite[Example A.15]{guillot2018fourfold}. More precisely, if $b=2$, $c=17$ and $a=d=bc=34$, then $(a,b)=(b,c)=(c,d)=0$ in $\on{Br}(\Q)$ but $\ang{a,b,c,d}$ is not defined over $\Q$. We will refer to this example as the Harpaz--Wittenberg example. In contrast, \cite[Theorem 6.2]{guillot2018fourfold} shows that over any number field $F$ and $a,b,c,d$ are independent in $F^\times /F^{\times 2}$, the identity $(a,b)=(b,c)=(c,d)=0$ in $\on{Br}(F)$ implies that $\ang{a,b,c,d}$ vanishes. 
 
 The aforementioned examples and results suggest studying \Cref{massey-conj} and \Cref{strong-massey} for mod $2$ Massey products of the form $\ang{a,b,c,a}$, or even $\ang{bc,b,c,bc}$, over an arbitrary field $F$ of characteristic different from $2$. 
 Our first theorem is a proof of \Cref{massey-conj} for all mod $2$ Massey products of the form $\ang{a,b,c,a}$.
 
	\begin{thm}\label{massey-deg}
		Let $p=2$, let $F$ be a field of characteristic different from $2$, and let $a,b,c,d\in F^{\times}$ be such that $ad$ is a square in $F$. Then the Massey product $\ang{a,b,c,d}$ vanishes if and only if it is defined.
	\end{thm}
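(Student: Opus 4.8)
The ``only if'' implication is immediate, since a vanishing Massey product is in particular non-empty. Moreover, as $ad\in F^{\times 2}$ we have $\chi_d=\chi_a$ in $H^1(F,\Z/2\Z)$, so $\ang{a,b,c,d}=\ang{a,b,c,a}$ and we may assume $d=a$. Writing $\chi_a,\chi_b,\chi_c$ for the classes attached to $a,b,c$, the plan is to read a defining system as a homomorphism $\Gamma_F\to U_5(\F_2)$ into the $5\times 5$ upper unitriangular group truncated modulo its centre, with superdiagonal $(\chi_a,\chi_b,\chi_c,\chi_a)$: concretely a defining system is a family of cochains $x_{ij}$ ($1\le i<j\le 5$, $(i,j)\neq(1,5)$) with $x_{12}=\chi_a$, $x_{23}=\chi_b$, $x_{34}=\chi_c$, $x_{45}=\chi_a$ and $dx_{ij}=\sum_{i<k<j}x_{ik}\cup x_{kj}$, and a representative of the corresponding value of $\ang{a,b,c,a}$ is the cocycle
\[
\omega=\chi_a\cup x_{25}+x_{13}\cup x_{35}+x_{14}\cup\chi_a .
\]
Since modifying $x_{14}$ and $x_{25}$ by $1$-cocycles changes $\omega$ by an arbitrary element of $\chi_a\cup H^1(F,\Z/2\Z)=(a)\cup H^1(F,\Z/2\Z)$, this subgroup lies in the indeterminacy of $\ang{a,b,c,a}$. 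Hence it suffices to produce one defining system for which $\omega\in (a)\cup H^1(F,\Z/2\Z)$.

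By hypothesis $\ang{a,b,c,a}$ is defined, so $(a,b)=(b,c)=(c,a)=0$ in $\on{Br}(F)$ and the cochains $x_{13},x_{24},x_{35}$ exist; moreover the existence of $x_{14}$ and $x_{25}$ forces the triple Massey products $\ang{a,b,c}$ and $\ang{b,c,a}$ to be defined, hence to vanish by the theorem on triple products recalled above. Set $E=F(\sqrt a)$. I would then invoke the fundamental exact sequence for the quadratic extension $E/F$, which identifies $\ker\bigl(\on{res}_{E/F}\colon H^2(F,\Z/2\Z)\to H^2(E,\Z/2\Z)\bigr)$ with $(a)\cup H^1(F,\Z/2\Z)$. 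Thus the reduction of the previous paragraph becomes: \emph{it suffices to exhibit a defining system with $\omega|_E=0$.}

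Over $E$ the character $\chi_a$ vanishes, so $\omega|_E=[\,x_{13}|_E\cup x_{35}|_E\,]$. Because $dx_{13}=\chi_a\cup\chi_b$ and $dx_{35}=\chi_c\cup\chi_a$, the restrictions $x_{13}|_E$ and $x_{35}|_E$ are $1$-cocycles over $E$; writing $z,w\in E^\times$ for elements representing their classes in $H^1(E,\Z/2\Z)=E^\times/E^{\times 2}$, the vanishing of $(a,b)$ and $(c,a)$ is reflected in $N_{E/F}(z)=b$ and $N_{E/F}(w)=c$ (up to squares), and $\omega|_E=(z,w)_E$. Restricting the equations defining $x_{14}$ and $x_{25}$ to $E$ and using $\chi_a|_E=0$ gives $(z,c)_E=0$ and $(b,w)_E=0$; writing $c=N_{E/F}(w)=w\,\sigma(w)$ and $b=z\,\sigma(z)$ for the nontrivial $\sigma\in\on{Gal}(E/F)$, these say $(z,w)_E=(z,\sigma w)_E=(\sigma z,w)_E$, so that the class $(z,w)_E$ is $\on{Gal}(E/F)$-invariant.

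It remains to prove that $(z,w)_E=0$, and this is the heart of the matter. The constraints isolated so far are symmetric in $z\leftrightarrow\sigma z$ and $w\leftrightarrow\sigma w$ and do not by themselves force the symbol to vanish; what must be exploited is the \emph{compatibility} of the two triple Massey products, namely that $x_{13}$ and $x_{35}$ enter through the \emph{single} shared cochain $x_{24}$ (with $dx_{24}=\chi_b\cup\chi_c$) via the relations $\chi_a\cup x_{24}\equiv x_{13}\cup\chi_c$ and $x_{24}\cup\chi_a\equiv\chi_b\cup x_{35}$ modulo coboundaries. This coupling lives genuinely over $F$ and degenerates upon restriction to $E$, so the main obstacle is to feed it into the $E$-level symbol $(z,w)_E$. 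I expect to resolve this by a linkage argument: using that $b$ and $c$ are norms from $E$ and that $(b,c)_F=0$, together with the freedom to replace $x_{13},x_{35}$ by $x_{13}+(f_1),x_{35}+(f_2)$ for $f_1\in N_{F(\sqrt c)/F}F(\sqrt c)^\times$ and $f_2\in N_{F(\sqrt b)/F}F(\sqrt b)^\times$, to bring $z$ and $w$ into a common quadratic slot over $E$ and thereby split $(z,w)_E$. Carrying out this common-slot normalization is the step I anticipate to be the most delicate, and it is where the equality of the first and last entries of the Massey product is indispensable.
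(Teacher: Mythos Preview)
Your reduction is sound: setting $d=a$, observing that $(a)\cup H^1(F,\Z/2\Z)$ lies in the indeterminacy and coincides with $\ker\bigl(\on{res}\colon H^2(F,\Z/2\Z)\to H^2(E,\Z/2\Z)\bigr)$ for $E=F_a$, and identifying $\omega|_E$ with a symbol $(\alpha,\delta)_E$ where $N_a(\alpha)\equiv b$ and $N_a(\delta)\equiv c$ modulo squares, recovers by a direct cochain calculation essentially what the paper establishes as \Cref{u5-cor}. The further constraints $(\alpha,c)_E=0$ and $(b,\delta)_E=0$ that you extract from the existence of $x_{14}$ and $x_{25}$ are also correct.

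The gap is that your argument stops precisely at the hard step. You yourself note that these constraints only make $(\alpha,\delta)_E$ Galois-invariant and do not force it to vanish, and that the missing input is the compatibility through the shared cochain $x_{24}$---but you do not actually use it. The proposed ``common-slot normalization'' is not carried out, and nothing suggests a purely formal linkage suffices: the paper requires genuine arithmetic input here. Concretely, the paper (i) normalizes to $b+c=1$ and chooses explicit $\alpha,\delta$ via solutions of $v_1^2-bv_2^2=u_1^2-cu_2^2=a$; (ii) converts \emph{definedness} of $\ang{a,b,c,a}$ into the norm condition $r\in N_aN_{ab}N_{ac}$ through an analysis of Galois $\cl{U}_5$-algebras (\Cref{u5-e}, \Cref{lambda-criterion}(a)); (iii) converts \emph{vanishing} into $t\in N_cN_{ac}N_{bc}$ by combining \Cref{u5-cor} with an Albert-form/biquaternion argument (\Cref{albert}, \Cref{albert-cor}); and (iv) passes from one to the other via the Brauer identity $(r,a)+(s,b)+(t,c)=0$ (\Cref{sc+tb=pa}, \Cref{formal}). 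Step (iii) is exactly what upgrades ``$N_a(\alpha x,\delta y)=0$ in $\on{Br}(F)$'' to ``$(\alpha x,\delta y)=0$ in $\on{Br}(E)$,'' and this transfer is the substantive idea your outline lacks; moreover the freedom you allow in $x_{13},x_{35}$ (only $f_1\in N_c$, $f_2\in N_b$) is too restrictive even to reach the setup in which that Albert-form step applies.
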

	
	We can be more explicit when $a=d=bc$, that is, for Massey products of the form $\ang{bc,b,c,bc}$. 
	
	\begin{thm}\label{massey-double-deg}
		Let $p=2$, let $F$ be a field of characteristic different from $2$, and let $a,b,c,d\in F^{\times}$ be such that $ad$ and $abc$ are squares in $F$. Then the following are equivalent:
		\begin{enumerate}
			\item the Massey product $\ang{a,b,c,d}$ is defined,
			\item the Massey product $\ang{a,b,c,d}$ vanishes,
			\item $(b,c)=0$ in $\on{Br}(F)$ and $-1\in N_{b,c}$.
		\end{enumerate}
	\end{thm}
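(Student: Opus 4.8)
The plan is to first reduce to the inner symbol and dispatch the easy implications, then concentrate on the one substantive direction. Since $ad$ and $abc$ are squares, in $F^\times/F^{\times 2}$ we have $a\equiv d\equiv bc$, so $\ang{a,b,c,d}=\ang{bc,b,c,bc}$ and it suffices to argue with this symbol. The implication $(2)\Rightarrow(1)$ is immediate, and $(1)\Rightarrow(2)$ is exactly \Cref{massey-deg}, applicable because $ad$ is a square; thus $(1)\Leftrightarrow(2)$. For $(1)\Rightarrow(3)$ I would invoke \Cref{cup-products-zero}: if the product is defined, then the consecutive cup products vanish. A direct computation in $H^2(F,\Z/2)$, using $(b,b)=(b,-1)$ and bilinearity, gives $(bc)\cup b=(b,-c)$, $b\cup c=(b,c)$ and $c\cup(bc)=(c,-b)$; setting these to $0$ and combining yields $(b,c)=0$ together with $(b,-1)=(c,-1)=0$, i.e. $(b,c)=0$ and $-1\in N_{b,c}$. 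This leaves $(3)\Rightarrow(1)$, which is the heart of the matter.

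For $(3)\Rightarrow(1)$ I would build a defining system explicitly, that is, construct a homomorphism $\Gamma_F\to \bar U_5(\Z/2)$ lifting the four characters, where $\bar U_5$ is the group of upper unitriangular $5\times 5$ matrices with the upper-right corner removed. Writing the superdiagonal entries as cocycles for $bc,b,c,bc$, the vanishings $(bc,b)=(b,-1)+(b,c)=0$, $(b,c)=0$ and $(c,bc)=(c,-1)+(b,c)=0$, all consequences of $(3)$, let me choose level-two cochains $x_{13},x_{24},x_{35}$ with $dx_{13}=(bc)\cup b$, $dx_{24}=b\cup c$ and $dx_{35}=c\cup(bc)$. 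The remaining obstruction to definedness is that the two secondary classes $P=[(bc)\cup x_{24}+x_{13}\cup c]$ and $Q=[b\cup x_{35}+x_{24}\cup(bc)]$, which are values of the triple Massey products $\ang{bc,b,c}$ and $\ang{b,c,bc}$, must vanish \emph{simultaneously}. Each vanishes on its own by the $n=3$ case of \Cref{massey-conj} \cite{matzri2014triple,efrat2017triple,minac2016triple}, so $P\in\on{im}(\cup c)+\on{im}(\cup bc)$ and $Q\in\on{im}(\cup b)+\on{im}(\cup bc)$; but the admissible modifications $x_{13}\mapsto x_{13}+z_{13}$, $x_{35}\mapsto x_{35}+z_{35}$, $x_{24}\mapsto x_{24}+z_{24}$ with $z_{ij}$ $1$-cocycles change $P$ by $(z_{13})\cup c+(bc)\cup(z_{24})$ and $Q$ by $b\cup(z_{35})+(bc)\cup(z_{24})$, so the contribution coming from $x_{24}$ is common to both.

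Setting $E_b=F(\sqrt b)$ and $E_c=F(\sqrt c)$ and using the standard identification $\on{im}(\cup c)=\ker(\on{res}\colon H^2(F)\to H^2(E_c))$ (and symmetrically for $b$), the task becomes to find $y\in F^\times$, the class of $z_{24}$, with $\on{res}_{E_c}(y\cup b)=\on{res}_{E_c}(P)$ and $\on{res}_{E_b}(y\cup c)=\on{res}_{E_b}(Q)$ simultaneously, after which $z_{13}$ and $z_{35}$ absorb the remaining parts. The main obstacle is precisely this simultaneity: since $x_{24}$ is shared by the two triple products, $P$ and $Q$ cannot be killed independently, and a common shift $y$ need not exist for arbitrary $b,c$. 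This is where $(3)$ is indispensable: from $(b,c)=0$ and $(b,-1)=(c,-1)=0$ one deduces $(bc,b)=(bc,c)=(bc,-1)=0$, so $b,c,-1$ are all norms from $F(\sqrt{bc})$, which rigidly constrains $\on{res}_{E_c}(P)$ and $\on{res}_{E_b}(Q)$, while the hypothesis $-1\in N_{b,c}$ supplies the common slot making the two restriction equations compatible. Verifying this compatibility, most plausibly through an explicit norm computation in the split algebra $(b,c)$ witnessing $c=N_{E_b/F}(\lambda)$ together with a norm expressing $-1\in N_{b,c}$, is the technical core; once a common $y$ is produced the defining system closes up, giving $(1)$ and hence, via $(1)\Leftrightarrow(2)$, also $(2)$.
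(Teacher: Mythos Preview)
Your argument for $(1)\Rightarrow(3)$ contains a genuine error. From the vanishing of the consecutive cup products you correctly deduce $(b,c)=0$ and $(b,-1)=(c,-1)=0$, but then you write ``i.e.\ $(b,c)=0$ and $-1\in N_{b,c}$''. This ``i.e.'' is false: $(b,-1)=(c,-1)=0$ says that $-1\in N_b\cap N_c$, whereas $N_{b,c}$ is the image of the norm from the degree-$4$ \'etale algebra $F_{b,c}$, and $N_{b,c}\subsetneq N_b\cap N_c$ in general. The Harpaz--Wittenberg example in the paper (\Cref{harpaz}) is precisely a witness to this: with $F=\Q$, $b=2$, $c=17$ one has $(2,-1)=(17,-1)=(2,17)=0$, yet $-1\notin N_{2,17}$ (\Cref{-1-not-norm}), and $\ang{34,2,17,34}$ is \emph{not} defined. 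So the vanishing of consecutive cup products cannot by itself imply (3); the implication $(1)\Rightarrow(3)$ needs a finer input than cup products, and your proof of this direction does not go through.

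Your $(3)\Rightarrow(1)$ is only a programme, not a proof. You set up the cochain bookkeeping correctly and isolate the real issue, the shared shift of $x_{24}$ governing both secondary classes $P$ and $Q$, but the sentence ``the hypothesis $-1\in N_{b,c}$ supplies the common slot making the two restriction equations compatible'' is an assertion, not an argument. You would need to exhibit, from a concrete norm equation $-1=N_{b,c}(\xi)$, an element $y\in F^\times$ with $\on{res}_{E_c}(y\cup b)=\on{res}_{E_c}(P)$ and $\on{res}_{E_b}(y\cup c)=\on{res}_{E_b}(Q)$; nothing in your sketch produces such a $y$, and the failure of $(1)$ in the Harpaz--Wittenberg example (where everything in your setup except $-1\in N_{b,c}$ is satisfied) shows that this step genuinely requires work.

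For comparison, the paper does not attempt either of these directions as you do. It first proves $(1)\Leftrightarrow(2)$ via \Cref{massey-deg}, and then proves $(2)\Leftrightarrow(3)$ using the explicit criterion of \Cref{u5-cor} for vanishing: $\ang{bc,b,c,bc}$ vanishes iff one can solve $(\alpha x,\delta y)=(\alpha x,c)=0$ in $\on{Br}(F_{bc})$. Via \Cref{albert-cor} and \Cref{alpha-cup-c} this becomes $f\in N_bN_cN_{bc}$ for a specific scalar $f$, which by \Cref{biquadratic-triple-norm}(2) is $f^2\in N_{b,c}$; a direct norm computation (\Cref{-d2}) then shows $-f^2\in N_{b,c}$ always, so the condition is exactly $-1\in N_{b,c}$. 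Thus the paper extracts $(3)$ not from cup products but from the structure of the vanishing criterion itself, which is why the strictly stronger condition $-1\in N_{b,c}$ (rather than $-1\in N_b\cap N_c$) emerges.
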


 Here $N_{b,c}$ denotes the image of the norm map $F_{b,c}^{\times}\to F^{\times}$, where $F_{b,c}\coloneqq F[x_b,x_c]/(x_b^2-b,x_c^2-c)$; see the Notation section below. In particular, \Cref{strong-massey} has a positive answer for mod $2$ Massey products of the form $\ang{bc,b,c,bc}$ if $F$ contains a primitive $8$-th root of unity. Condition (3) of \Cref{massey-double-deg}  
 allows us to recover the Harpaz--Wittenberg example; see \Cref{harpaz}.

\smallskip 
We say that a differential graded ring $A$ is \emph{formal} if it is quasi-isomorphic to its cohomology ring $H^{\smallbullet}(A)$, viewed as a differential graded ring with zero differential. We write $C^{\smallbullet}(F,\Z/p\Z)$ for the differential graded algebra (DGA) of mod $p$ continuous cochains of $\Gamma_F$. If $C^{\smallbullet}(F,\Z/p\Z)$ is formal, then \Cref{massey-conj} holds for $F$ and \Cref{strong-massey} has a positive answer for $F$; see \cite[Theorem 3.8]{pal2022real}. 

In \cite[Question 1.4]{hopkins2015splitting}, Hopkins and Wickelgren asked whether $C^{\smallbullet}(F,\Z/p\Z)$ is formal for every field $F$. Positselski \cite[Example 6.3]{positselski2017koszulity} showed that the answer this question is negative. More precisely, Positselski proved that $C^{\smallbullet}(F,\Z/p\Z)$ is not formal when $F$ is a local field of residue characteristic different from $p$ and containing a primitive $p$-th root of unity if $p$ is odd, or a square root of $-1$ if $p=2$. (In contrast, \Cref{massey-conj} is known and \Cref{strong-massey} has affirmative answer for local fields.) The Harpaz--Wittenberg example implies that $C^{\smallbullet}(F,\Z/p\Z)$ is not formal for $F=\Q$. 

P\'al and Quick showed that $C^{\smallbullet}(F,\Z/2\Z)$ is formal for every field $F$ of cohomological dimension $\leq 1$; see \cite[Theorem 1.8]{pal2022real}. (P\'al and Szab\'o \cite{pal2018strong} had previously shown that \Cref{strong-massey} has a positive answer for such $F$.) 

The following refinement of the question of Hopkins and Wickelgren, due to Positselski \cite[p. 226]{positselski2017koszulity}, is well known among experts.

\begin{question}[Positselski]\label{formal-question}
    Do there exist a prime number $p$ and a field $F$ of characteristic different from $p$ and containing all the $p$-power roots of unity such that $C^{\smallbullet}(F,\Z/p\Z)$ is not formal?
\end{question}

We settle \Cref{formal-question} in the affirmative. 

\begin{thm}[\Cref{counter-quadrelli}]\label{mainthm-positselski}
For every field $F_0$ of characteristic different from $2$, there exists a field extension $F/F_0$ such that \Cref{strong-massey} has a negative answer for $F$, $p=2$ and $n=4$. In particular, \Cref{formal-question} has a positive answer.
\end{thm}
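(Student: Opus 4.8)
\emph{Reduction.} The plan is to reduce \Cref{mainthm-positselski} to a single existence statement and then to build the required field by a function-field tower. It suffices to prove: for every field $F_0$ with $\on{char}(F_0)\neq 2$ there is an extension $F/F_0$, containing all $2$-power roots of unity, together with elements $a,b,c,d\in F^\times$ satisfying $(a,b)=(b,c)=(c,d)=0$ in $\on{Br}(F)$ and such that $\ang{a,b,c,d}$ is not defined. Indeed, the triple products $\ang{a,b,c}$ and $\ang{b,c,d}$ are then defined (the consecutive cup products vanish), and by the known case $n=3$ of \Cref{massey-conj} they vanish; thus $(a,b,c,d)$ is exactly a negative answer to \Cref{strong-massey} for $p=2$, $n=4$. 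Since formality of $C^{\smallbullet}(F,\Z/2\Z)$ would force a positive answer to \Cref{strong-massey}, the cochain DGA is not formal, and because $F$ contains all $2$-power roots of unity this settles \Cref{formal-question}. Applying the statement to $F_0(\mu_{2^{\infty}})$ delivers the arbitrary base field and the roots-of-unity clause simultaneously.

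\emph{Why the example must be non-degenerate.} One cannot extract this from \Cref{massey-double-deg}: once $\zeta_8\in F$ and $[F_{b,c}:F]=4$, one has $-1=\zeta_8^4=N_{F_{b,c}/F}(\zeta_8)\in N_{b,c}$, so condition (3) there reduces to $(b,c)=0$ and strong Massey vanishing holds for all $\ang{bc,b,c,bc}$; likewise \Cref{massey-deg} gives vanishing throughout the degenerate range $ad\in F^{\times 2}$. Hence the sought $(a,b,c,d)$ must satisfy $ad\notin F^{\times 2}$, and I must work with the general obstruction to definedness. I would encode it through unipotent representations: $\ang{a,b,c,d}$ is defined precisely when the homomorphism $\Gamma_F\to(\Z/2\Z)^4$ determined by the characters $\chi_a,\chi_b,\chi_c,\chi_d$ lifts to a homomorphism $\Gamma_F\to\cl{U}_5(\F_2)$ with these characters on the superdiagonal, where $\cl{U}_5$ is the quotient of the group of $5\times 5$ unipotent upper-triangular matrices by its centre. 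Once the three cup products vanish one may fix cochains in positions $(1,3),(2,4),(3,5)$, and the only remaining obstruction is to filling the entries $(1,4)$ and $(2,5)$ using a \emph{common} choice of the shared cochain in position $(2,4)$. Unwinding this, definedness is equivalent to the vanishing of an explicit class $\omega=\omega(a,b,c,d)$ in the quotient of $H^1(F,\Z/2\Z)$ by the subgroup $\{\,v:\chi_a\cup v\in H^1(F,\Z/2\Z)\cup\chi_c\,\}+\{\,v:v\cup\chi_d\in\chi_b\cup H^1(F,\Z/2\Z)\,\}$.

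\emph{The construction.} Start with $k=F_0(\mu_{2^{\infty}})$ and four algebraically independent elements $a,b,c,d$ over $k$, so that over $k(a,b,c,d)$ the symbols $(a,b),(b,c),(c,d)$ are nonzero and $\omega\neq 0$. I would then take $F$ to be the colimit of the tower obtained by repeatedly passing to the function fields of the conics $C_{a,b},C_{b,c},C_{c,d}$ attached to these three quaternion symbols, iterating so that in the colimit all three symbols die. Each such conic is geometrically integral and geometrically rational, so $k$ — and hence all $2$-power roots of unity — remains relatively algebraically closed in $F$. Over $F$ we then have $(a,b)=(b,c)=(c,d)=0$, so $\ang{a,b,c}$ and $\ang{b,c,d}$ are defined and vanish, and it remains only to verify that $\omega\neq 0$ in $F$, i.e.\ that $\ang{a,b,c,d}$ is still not defined.

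\emph{The main obstacle.} The crux is precisely this persistence: the very function fields used to kill the three symbols are the ones that might also trivialize $\omega$. By Amitsur's theorem the kernel of $H^2(-,\Z/2\Z)$ along $F\to F(C_{x,y})$ is exactly the line spanned by $(x,y)$, so the relations are killed one at a time in a controlled way; the difficulty is to propagate this control to $\omega$, which lives one level up. I would do this by fixing a discrete valuation on $k(a,b,c,d)$ adapted to the generic configuration, computing a residue of $\omega$ — a lower-degree cup- or Massey-type invariant over the residue field — and showing that this residue is unaffected by each conic function field in the tower, each such extension being either unramified at the valuation or acting trivially on the residue; a colimit argument then yields $\omega\neq 0$ over $F$. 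Establishing this non-triviality of the residue under passage to the conic function fields — equivalently, that the splitting varieties of the three symbols cannot manufacture a common defining system for the quadruple product — is the heart of the argument and the step I expect to require the most care.
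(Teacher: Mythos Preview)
Your argument has a genuine gap and a conceptual misstep that sends you down an unnecessarily hard route.

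\textbf{The misstep.} You claim that ``\Cref{massey-deg} gives vanishing throughout the degenerate range $ad\in F^{\times 2}$'' and conclude that the counterexample must have $ad\notin F^{\times 2}$. But \Cref{massey-deg} asserts \emph{defined $\Rightarrow$ vanishes}; it says nothing about whether the vanishing of the three cup products forces definedness. A counterexample to \Cref{strong-massey} needs $(a,b)=(b,c)=(c,d)=0$ with $\ang{a,b,c,d}$ \emph{not defined}, and \Cref{massey-deg} does not exclude this when $a=d$. Only the doubly degenerate case $a=d=bc$ is ruled out (by \Cref{massey-double-deg}, once $\zeta_8\in F$). In fact the paper constructs its counterexample precisely in the range $a=d$: the explicit criterion of \Cref{lambda-criterion}(a), that $\ang{a,b,c,a}$ is defined iff the concrete scalar $r$ lies in $N_aN_{ab}N_{ac}$, is what makes the non-definedness checkable.

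\textbf{The gap.} Your proposed construction --- four generic parameters, then an infinite tower of conic function fields killing $(a,b),(b,c),(c,d)$ --- leaves the entire content of the theorem in the sentence ``establishing this non-triviality of the residue under passage to the conic function fields \dots\ is the heart of the argument.'' You do not define the valuation, do not compute the residue of your $\omega$, and do not prove it is unaffected by each step of the tower. Amitsur controls $H^2$ along a conic function field, but your obstruction lives in a quotient of $H^1$ by a sum of two subgroups that themselves change at each stage; propagating non-triviality through an infinite colimit of such steps is a genuine problem, not a formality.

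\textbf{What the paper does instead.} It avoids the tower entirely. Taking $c=1-b$ makes $(b,c)=0$ for free; one passes to the function field of the conic for $(a,c)$ and then to a $2$-closure $E$; finally $F=E(X)$ where $X$ is the conic for $(a,b)$. Over $F$ one has $(a,b)=(b,c)=(a,c)=0$, and non-definedness of $\ang{a,b,c,a}$ is checked by showing $r\notin N_aN_{ab}N_{ac}$ via a short parity argument on divisors on $X$ (this is where $2$-speciality of $E$ is used). A non-degenerate example is then obtained by a one-line specialization trick: set $d=ua$ over $F(u)$ and apply \Cref{specialize}. The roots-of-unity clause comes for free by starting from $F_0(\mu_{2^\infty})$. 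The explicit norm criterion is doing all the work; your abstract $\omega$ has no comparable handle.
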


It follows that \Cref{strong-massey} has a negative answer in general, even if $F$ contains an algebraically closed subfield. (The more precise \Cref{counter-quadrelli}(a) shows that we may even find counterexamples of the form $\ang{a,b,c,a}$. By \Cref{massey-double-deg}, there are no such examples if we further assume that $a=bc$.) In view of recent work of Quadrelli \cite{quadrelli2022massey}, it seems natural to amend \Cref{strong-massey} by requiring that $F$ contains a square root of $-1$ when $p=2$. By \Cref{mainthm-positselski}, even this weakening of \Cref{strong-massey} has a negative answer.

The main ingredient in the proof of \Cref{mainthm-positselski} is a new criterion for when a fourfold Massey product is defined; see Propositions \ref{u5-e} and \ref{lambda-criterion}(a). As we explain below, this criterion also plays an important role in the proof of \Cref{massey-deg}.

    \smallskip

    We now explain the main ideas of the proof of \Cref{massey-deg}. For every $n\geq 2$, let $U_{n+1}\subset \on{GL}_{n+1}(\F_2)$ be the group of unipotent upper triangular matrices, and let $\cl{U}_{n+1}$ be the group of unipotent upper triangular matrices with entry $(1,n+1)$ erased; see (\ref{central-ses}). Given $a,b,c,d\in F^\times$, consider the Galois $(\Z/2\Z)^4$-algebra \[F_{a,b,c,d}\coloneqq F[x_a,x_b,x_c,x_d]/(x_a^2-a,x_b^2-b,x_c^2-c,x_d^2-d),\]
    where the first (resp. second, third, fourth) factor of $(\Z/2\Z)^4$ sends $x_a$ (resp. $x_b$, $x_c$, $x_d$) to its opposite and fixes the other three variables. By a theorem of Dwyer \cite{dwyer1975homology}, the Massey product $\ang{a,b,c,d}$ is defined (resp. vanishes) if and only $F_{a,b,c,d}/F$ can be embedded into a Galois $\cl{U}_5$-algebra (resp. $U_5$-algebra) over $F$; see \Cref{dwyer-cor} for the precise statement. 
    
    If $u\in F^\times$, we let $N_u$ be the image of the norm map $F_u^\times\to F^\times$, where $F_u\coloneqq F[x_u]/(x_u^2-u)$. Suppose that $(a,b)=(b,c)=(c,d)=0$ in $\on{Br}(F)$. (As we mentioned before \Cref{strong-massey}, this condition is satisfied if $\ang{a,b,c,d}$ is defined.) By a careful study of Galois $U_3$-algebras, $U_4$-algebras and $\cl{U}_5$-algebras over $F$, in \Cref{u5-e} we associate to $a,b,c,d$ a scalar $e\in F^\times$, uniquely determined in $F^\times/N_aN_{ac}N_dN_{bd}$, such that $\ang{a,b,c,d}$ is defined if and only if $e\in N_aN_{ac}N_dN_{bd}$. The scalar $e$ is defined in terms of a pair $(\epsilon,\nu)\in F_{a,c}^\times\times F_{b,d}^\times$ satisfying certain properties: roughly speaking, $\epsilon$ and $\nu$ correspond to Galois $U_4$-algebras $K_1/F$ and $K_2/F$ inducing $F_{a,b,c}/F$ and $F_{b,c,d}/F$ (whose existence essentially amounts to the validity of \Cref{massey-conj} for $n=3$ and $p=2$), and $e$ measures the failure of $K_1$ and $K_2$ of being induced by a common Galois $\cl{U}_5$-algebra $K/F$. 
    
    Let $\alpha\in F_a^\times$ and $\delta\in F_d^\times$ be such that $N_a(\alpha)=b$ and $N_d(\delta)=c$ in $F^\times$.   (Such $\alpha$ and $\delta$ exist because $(a,b)=(c,d)=0$.) It was shown in \cite[Theorem A]{guillot2018fourfold} that $\ang{a,b,c,d}$ vanishes if and only if there exist $x,y\in F^\times$ such that $(\alpha x,\delta y)=0$ in $\on{Br}(F_{a,d})$; see \Cref{u5} and \Cref{u5-rephrase} for a short proof. In order to prove \Cref{massey-conj} for $n=4$ and $p=2$, it suffices to show that our condition is equivalent to that of \cite[Theorem A]{guillot2018fourfold}. We are able to show the equivalence when $a=d$, thus proving \Cref{massey-deg}, by the following strategy. \Cref{massey-conj} is true when $F$ is finite (see \Cref{dwyer-pro-free}), hence we may suppose that $F$ is infinite. Since $(b,c)=0$, multiplying $b$ and $c$ by non-zero squares if necessary, we may suppose that $b+c=1$, and since $(a,b)=(c,d)=0$ there exist $v_1,v_2,u_1,u_2\in F^\times$ such that $v_1^2-bv_2^2=a$ and $u_1^2-cu_2^2=d$. If $v_1,v_2,u_1,u_2$ are general enough, we may use them to define three scalars $r,s,t\in F^\times$; see below (\ref{vi-ui-def}). The conclusion follows from the diagram of equivalences below:
    \[
    \begin{tikzcd}
    r\in N_aN_{ab}N_{ac}\arrow[r, Leftrightarrow,"\ref{q-t-s}"] \arrow[d, Leftrightarrow,"\ref{lambda-criterion}(a)"] & s\in N_bN_{ab}N_{bc} \arrow[r, Leftrightarrow,"\ref{q-t-s}"]& t\in N_cN_{ac}N_{bc} \arrow[d, Leftrightarrow,"\ref{lambda-criterion}(b)"]\\
    \text{$\ang{a,b,c,a}$ defined} && \text{$\ang{a,b,c,a}$ vanishes}.
    \end{tikzcd}
    \] 
    We will prove the vertical equivalence on the left by showing that $e=r$ in the group $F^\times/N_aN_{ab}N_{ac}$ via an explicit computation; see \Cref{lambda-formula}. (Note that, when $a=d$, the subgroup $N_aN_{ac}N_dN_{bd}$ is equal to $N_aN_{ab}N_{ac}$.) The equivalences in the top row are formal consequences of the identity $(r,a)+(s,b)+(t,c)=0$ in $\on{Br}(F)$, which is also proved by a computation; see \Cref{sc+tb=pa} and \Cref{formal}.
    
    For the vertical equivalence on the right, we show that $t\in N_cN_{ac}N_{bc}$ is equivalent to the condition of \cite[Theorem A]{guillot2018fourfold}. The key point is that, under the assumption $a=d$, the condition of \cite[Theorem A]{guillot2018fourfold} may be replaced by the system of two equations $(\alpha x,\delta y)=0$ and $(\alpha x,c)=0$ in $\on{Br}(F_a)$; see \Cref{u5-cor}. The first equation has a solution $(x,y)\in F^\times\times F^\times$ if and only if $x\in N_cN_{bc}$: we prove this in \Cref{albert-cor} using the theory of Albert forms attached to biquaternion algebras. As we prove in \Cref{alpha-cup-c}, a scalar $x\in F^\times$ solves the second equation if and only if $x\in t N_cN_{ac}$. All in all, the condition of \cite[Theorem A]{guillot2018fourfold} is satisfied if and only if $t N_cN_{ac}\cap N_cN_{bc}\neq\emptyset$, that is, $t\in N_cN_{ac}N_{bc}$. This proves the vertical equivalence on the right and completes our sketch of proof of \Cref{massey-deg}. 

    \smallskip

    We conclude this Introduction by describing the content of each section. In \Cref{preliminaries}, we collect the definitions and basic properties of Galois algebras and Massey products in Galois cohomology, and we recall Dwyer's theorem, which connects the two notions. We then establish some specialization lemmas for Massey products, which will be used in the proof of \Cref{mainthm-positselski}. \Cref{u5-bar-section} is devoted to the proof of Propositions \ref{uu5} and \ref{u5-e}, which give the aforementioned equivalent condition for a fourfold Massey product to be defined. Some of our results may be interpreted in terms of splitting varieties; see \Cref{splitting-varieties-sec}. In \Cref{u5-rephrase} we give a proof of \cite[Theorem A]{guillot2018fourfold} using our methods, and in \Cref{u5-cor} we specialize to the case $a=d$.
    \Cref{massey-deg-section} is devoted to the proof of \Cref{lambda-criterion} and \Cref{q-t-s}, from which \Cref{massey-deg} follows. In \Cref{massey-double-deg-section} we prove \Cref{massey-double-deg} and use it to recover the Harpaz--Wittenberg example. \Cref{mainthm-positselski} is a direct consequence of \Cref{counter-quadrelli}, which is proved in \Cref{positselski-sec}. 
    Finally, \Cref{appendix} contains some known results about biquadratic extensions which are used throughout the paper.

	\subsection*{Notation}
	In this paper, we let $F$ be a field, $F_{\on{sep}}$ be a separable closure of $F$, and $\Gamma_F\coloneqq \on{Gal}(F_{\on{sep}}/F)$ be the absolute Galois group of $F$. We often use additive notation while working with elements of $\Gamma_F$:  for all $\sigma,\tau\in \Gamma_F$ and $x\in F$, we have $(\sigma+\tau)(x)=\sigma(x)\tau(x)$ and $(\sigma\tau)(x)=\sigma(\tau(x))$. 
 
 If $E$ is an $F$-algebra, we write $H^i(E,-)$ for the \'etale cohomology of $\Spec(E)$ (possibly non-abelian if $i\leq 1$). If $E$ is a field, $H^i(E,-)$ may be identified with the continuous cohomology of the absolute Galois group of $E$.  

Suppose that $\on{char}(F)\neq 2$. If  $E$ is an $F$-algebra and $a_1,\dots,a_n\in E^{\times}$, we define the \'etale $E$-algebra $E_{a_1,\dots,a_n}$ by \[E_{a_1,\dots,a_n}\coloneqq E[x_1,\dots,x_n]/(x_1^2-a_1,\dots,x_n^2-a_n)\]
and we set $\sqrt{a_i}\coloneqq x_i$. Thus, for all $a\in F^\times$, the elements $1,\sqrt{a}$ form an $F$-basis of $F_a$. We denote by $R_{a_1,\dots,a_n}(-)$ the functor of Weil restriction along $F_{a_1,\dots,a_n}/F$. If $u\in F_{a_1,\dots,a_n}$, we write $N_{a_1,\dots,a_n}(u)$ and $\on{Tr}_{a_1,\dots,a_n}(u)$ for the norm and trace of $u$ with respect to $F_{a_1,\dots,a_n}/F$, respectively. 

We write $\on{Br}(F)$ for the Brauer group of $F$. The group operation on $\on{Br}(F)$ is denoted additively. If $\on{char}(F)\neq 2$ and $a,b\in F^\times$, we write $(a,b)$ for the corresponding quaternion algebra over $F$ and for its class in $\on{Br}(F)$. We write $N_{a_1,\dots,a_n}\colon\on{Br}(F_{a_1,\dots,a_n})\to \on{Br}(F)$ for the corestriction map along $F_{a_1,\dots,a_n}/F$.

	An $F$-variety is a separated integral $F$-scheme of finite type. If $X$ is an $F$-variety, we denote by $F(X)$ the function field of $X$. If $x$ is a point of $X$, we denote by $O_{X,x}$ the local ring of $X$ at $x$ and by $F(x)$ the residue field of $x$.
	
	\section{Preliminaries}\label{preliminaries}

 \subsection{Galois algebras}\label{kummer-sub} 

Let $G$ be a finite (abstract) group. By definition, a \emph{$G$-algebra} $L/F$ is an \'etale $F$-algebra on which $G$ acts via $F$-algebra automorphisms. We say that the $G$-algebra $L$ is \emph{Galois} if $|G|=\dim_FL$ and $L^G=F$; see \cite[Definitions (18.15)]{knus1998book}. A $G$-algebra $L/F$ is Galois if and only if the morphism of schemes $\on{Spec}(L)\to \on{Spec}(F)$ is an \'etale $G$-torsor. By  \cite[Example (28.15)]{knus1998book}, we have a canonical bijection 
\begin{equation}\label{galois-alg}
H^1(F,G)\xrightarrow{\sim}\set{\text{Isomorphism classes of Galois $G$-algebras over $F$}}
\end{equation}
which is functorial in $F$ and $G$.

Suppose now that $G=N\rtimes S$, where $N$ is a normal subgroup of $G$ and $S$ is a subgroup of $G$. Let $E$ be a Galois $S$-algebra over $F$, and let $\pi\colon\Gamma_F\to S$ be a continuous group homomorphism whose class in $H^1(F,S)$ coincides with the class of $E/F$. The group $S$ acts on $N$ by conjugation. We view $N$ as a $\Gamma_F$-module via $\pi$, and we write $N_{\pi}$ for the twist; see \cite[28.C]{knus1998book}. We have a canonical bijection
\begin{align}\label{twist-galois-algebras}
    H^1(F,N_{\pi})\xrightarrow{\sim}\{\text{Isom. classes of pairs $(K,\varphi)$, where $K/F$ is a Galois $G$-algebra}\\ \text{and $\varphi \colon K^N\to E$ is an isomorphism of Galois $S$-algebras}\},\nonumber
\end{align}
which is functorial in $F$.
Here, an isomorphism of pairs $(K,\varphi)\to(K',\varphi')$ is defined as an isomorphism of Galois $G$-algebras $\sigma\colon K\to K'$ over $F$ such that $\varphi=\varphi'\circ \sigma$ on $K^N$. Under the identification of (\ref{twist-galois-algebras}), the surjective twisting map 
\begin{equation}\label{twisting-map}
    H^1(F,N_{\pi})\to \on{Fiber}_{\pi}[H^1(F,G)\to H^1(F,S)]
\end{equation}
of \cite[Proposition 28.11]{knus1998book} sends the class of a pair $(K,\varphi)$ to the class of $K$.

\subsection{Kummer Theory} Suppose that $\on{char}(F)\neq 2$. Then $\Z/2\Z \simeq \mu_2$ over $F$, and so the Kummer sequence yields an isomorphism
	\begin{equation}\label{kummer}\on{Hom}(\Gamma_F,\Z/2\Z )=H^1(F,\Z/2\Z )\simeq H^1(F,\mu_2)\simeq F^{\times}/F^{\times 2}.\end{equation}

For every $a\in F^{\times}$, we let $\chi_a\colon \Gamma_F\to \Z/2\Z $ be the homomorphism corresponding to the coset $a F^{\times 2}$ under (\ref{kummer}). Explicitly, letting $a'\in F_{\on{sep}}^\times$ be such that $(a')^2=a$, we have $g(a')=(-1)^{\chi_a(g)}a'$ for all $g\in \Gamma_F$. The definition of $\chi_a$ does not depend on the choice of $a'$.

Now let $n\geq 1$. For all $i=1,\dots,n$, let $\sigma_i$ be the canonical generator of the $i$-th factor $\Z/2\Z$ of $(\Z/2\Z)^n$. It follows from (\ref{kummer}) that all Galois $(\Z/2\Z)^n$-algebras over $F$ are of the form $F_{a_1,\dots,a_n}$, where $a_1,\dots,a_n\in F^\times$ and the Galois $(\Z/2\Z)^n$-algebra structure is defined by $\sigma_i(\sqrt{a_i})=-\sqrt{a_i}$ for all $i$ and $\sigma_i(\sqrt{a_j})=\sqrt{a_j}$ for all $j\neq i$. The elements $a_1,\dots,a_n$ are uniquely determined modulo $F^{\times 2}$.
	
	The Kummer sequence provides a group isomorphism \[\iota\colon H^2(F,\Z/2\Z)\xrightarrow{\sim}\on{Br}(F)[2].\] For all $a,b\in F^{\times}$, $\iota(\chi_a\cup \chi_b)$ is equal to $(a,b)$; see \cite[Chapter XIV, Proposition 5]{serre1979local}. 
 The next lemma is well known and will be used several times in what follows.
	
	\begin{lemma}\label{cup-norm}
		Suppose that $\on{char}(F)\neq 2$ and let $a,b\in F^{\times}$. The following are equivalent:
		
		(i) $(a,b)=0$ in $\on{Br}(F)$;
		
		(ii) $b\in N_a$;
		
		(iii) $a\in N_b$;
  
		(iv) the smooth projective $F$-conic of equation $aX^2+bY^2=Z^2$ has an $F$-point (equivalently, it is isomorphic to $\P^1_F$).
	\end{lemma}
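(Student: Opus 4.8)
The plan is to take condition (iv), the existence of an $F$-point on the conic $aX^2+bY^2=Z^2$, as the central hinge and relate the other three conditions to it. First I would prove the equivalence of (ii) and (iv) by a direct manipulation, then obtain the equivalence of (iii) and (iv) for free from the manifest symmetry of the conic in the roles of $a$ and $b$, and finally connect (i) to this chain via the classical norm criterion for quaternion algebras. The genuine content lies in two places: the elementary but case-sensitive passage between norms and conic points, and the classical input that a quaternion algebra is split precisely when the relevant norm condition holds.

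For the equivalence (ii) $\Leftrightarrow$ (iv), I would split into two cases. If $a\in F^{\times 2}$, say $a=c^2$, then $F_a\simeq F\times F$, the norm $N_a$ is surjective onto $F^\times$ (so (ii) holds trivially), and the conic carries the $F$-point $[1:0:c]$ (so (iv) holds trivially); thus both conditions hold. If $a\notin F^{\times 2}$, then $F_a=F(\sqrt a)$ is a quadratic field extension and $N_a(u+v\sqrt a)=u^2-av^2$. In this case $b\in N_a$ means there exist $u,v\in F$ with $u^2-av^2=b$, and I would match such a solution with the conic point $[v:1:u]$, checking $a v^2+b\cdot 1=u^2$. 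Conversely, given an $F$-point $[X_0:Y_0:Z_0]$, the coordinate $Y_0$ cannot vanish: if $Y_0=0$ then $aX_0^2=Z_0^2$ with $(X_0,Z_0)\neq(0,0)$ forces $a$ to be a square, contradicting the case assumption. Hence I may scale to $Y_0=1$ and read off $b=Z_0^2-aX_0^2=N_a(Z_0+X_0\sqrt a)\in N_a$. The equivalence (iii) $\Leftrightarrow$ (iv) then follows by interchanging $a\leftrightarrow b$ and $X\leftrightarrow Y$, since the conic $aX^2+bY^2=Z^2$ is symmetric in this pair.

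For the link to (i), I would invoke the identity $\iota(\chi_a\cup\chi_b)=(a,b)$ recorded just before the statement, so that $(a,b)=0$ in $\on{Br}(F)$ is equivalent to the quaternion algebra $(a,b)$ being split. The classical norm theorem for quaternion algebras then gives that $(a,b)$ is split if and only if $b\in N_a$, i.e. (i) $\Leftrightarrow$ (ii); equivalently, $(a,b)$ is split if and only if its Severi--Brauer conic $aX^2+bY^2=Z^2$ has an $F$-point, i.e. (i) $\Leftrightarrow$ (iv). For this I would cite a standard reference such as Gille--Szamuely or Lam rather than reproving it. The main obstacle is not conceptual but bookkeeping: one must treat the degenerate case where $a$ (or $b$) is a square separately and must rule out the point at infinity $Y_0=0$ on the conic in the nondegenerate case, since these are exactly the spots where a careless argument would break down. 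Once the square case is dispatched and the point-at-infinity subtlety is handled, the remaining equivalences are formal or classical.
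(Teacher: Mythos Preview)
Your proposal is correct and essentially matches the paper's approach: the paper simply cites \cite[Propositions 1.1.7 and 1.3.2]{gille2017central} for the entire lemma, while you unpack the elementary equivalences (ii)$\Leftrightarrow$(iv)$\Leftrightarrow$(iii) by hand and then cite a standard reference for (i)$\Leftrightarrow$(ii). Both treatments ultimately defer the substantive content (the norm criterion for splitting of quaternion algebras) to the literature, so there is no meaningful difference in strategy.
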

	
	\begin{proof}
  See \cite[Propositions 1.1.7 and 1.3.2]{gille2017central}.
	\end{proof}

	\subsection{Massey products}\label{massey-sub}
	Let $(A,\partial)$ be a DGA over $\F_2$, that is, a graded associative $\F_2$-algebra $A=\oplus_{i\geq 0}A^i$ with a homomorphism $\partial\colon A\to A$ of degree $1$ (called the differential) such that $\partial\circ \partial=0$ and $\partial(ab)=\partial(a)b+a\partial(b)$ for all $a,b\in A$. We denote by $H^{\smallbullet}(A)\coloneqq \on{Ker}\partial/\on{Im}\partial$ the cohomology of $(A,\partial)$, and we write $\cup$ for the multiplication (cup product) on $H^{\smallbullet}(A)$.
	
	Let $n\geq 2$ be an integer and $a_1,\dots,a_n\in H^1(A)$. Consider a collection $M=(a_{ij})$ of elements of $A^1$, where $1\leq i<j\leq n+1$, $(i,j)\neq (1,n+1)$. We say that $M$ is a \emph{defining system} for the $n$-th order Massey product $\ang{a_1,\dots,a_n}$ if 

 \begin{enumerate}
     \item $\partial(a_{i,i+1})=0$ and $a_{i,i+1}$ represents $a_i$ in $H^1(A)$.
     \item $\partial(a_{ij})=\sum_{l=i+1}^{j-1}a_{il}a_{lj}$ for all $i<j-1$.
 \end{enumerate}
	
	It follows from (2) that  $\sum_{l=2}^na_{1l}a_{l,n+1}$ is a $2$-cocycle. We write $\ang{a_1,\dots,a_n}_M$ for its cohomology class in $H^2(A)$. By definition, the \emph{Massey product} of $a_1,\dots,a_n$ is the subset $\ang{a_1,\dots,a_n}\subset H^2(A)$ of elements of the form $\ang{a_1,\dots,a_n}_M$ for some defining system $M$.

	We say that the Massey product $\ang{a_1,\dots,a_n}$ is \emph{defined} if it is non-empty, that is, if there exists a defining system for $\ang{a_1,\dots,a_n}$. We say that $\ang{a_1,\dots,a_n}$ \emph{vanishes} if $0\in \ang{a_1,\dots,a_n}$. If $\ang{a_1,\dots,a_n}$ vanishes, then it is defined.
	
	\begin{rmk}\label{cup-products-zero}
	Suppose that $n\geq 3$, and let $a_1,\dots,a_n\in H^1(A)$ and $M=(a_{ij})$ be a defining system for the Massey product $\ang{a_1,\dots,a_n}$. As a special case of (2), $\partial(a_{i,i+2})=a_{i,i+1} a_{i+1,i+2}$ represents $a_i\cup a_{i+1}$ in $H^2(A)$. Thus, if $\ang{a_1,\dots,a_n}$ is defined, then $a_1\cup a_2=a_2\cup a_3=\dots= a_{n-1}\cup a_n=0$ in $H^2(A)$.
	\end{rmk}
	
	\begin{example}\label{massey-galois-ex}
		(a) Let $\Gamma$ be a profinite group. The complex $(C^{\smallbullet}(\Gamma,\Z/2\Z),\partial)$ of mod $2$ non-homogeneous continuous cochains  of $\Gamma$ with the standard cup product is a DGA over $\F_2$. We write $H^{\smallbullet}(\Gamma,\Z/2\Z)$ for the cohomology of $(C^{\smallbullet}(\Gamma,\Z/2\Z),\partial)$. By the previous discussion, it makes sense to talk about (mod $2$) Massey products of elements of $H^1(\Gamma,\Z/2\Z)$.
		
		\smallskip
		
		(b) As a special case of (a) when $\Gamma=\Gamma_F$, we may talk about (mod $2$) Massey products of elements of $H^1(F,\Z/2\Z)$. Suppose that $\on{char}(F)\neq 2$. By \Cref{kummer-sub} elements of $H^1(F,\Z/2\Z)$ correspond to elements of $F^\times/F^{\times 2}$, hence it makes sense to talk about Massey products of elements of $F^\times$. If $a_1,\dots,a_n\in F^\times$, we denote by $\ang{a_1,\dots,a_n}\subset H^2(F,\Z/2\Z)$ their (mod $2$) Massey product. By definition, $\ang{a_1,\dots,a_n}$ is defined (resp. vanishes) if and only if  so does $\ang{\chi_{a_1},\dots,\chi_{a_n}}$.
	\end{example} 
	  
 Let $U_{n+1}\subset \on{GL}_{n+1}(\F_2)$ be the subgroup of all upper-triangular unipotent matrices. For all $1\leq i<j\leq n+1$, we denote by $u_{i,j}\colon U_{n+1}\to \Z/2\Z$ the $(i,j)$-th coordinate function on $U_{n+1}$. The functions $u_{i,j}$ are group homomorphisms only when $j=i+1$. The center $Z_{n+1}$ of $U_{n+1}$ is the subgroup of all matrices such that $u_{i,j}=0$ when $(i,j)\neq (1,n+1)$; in particular $Z_{n+1}\simeq \Z/2\Z$. We have a commutative diagram
	\begin{equation}\label{central-ses}
	\begin{tikzcd}
		1\arrow[r] & Z_{n+1}\arrow[r] &  U_{n+1}\arrow[r]  \arrow[dr]& \cl{U}_{n+1}\arrow[r] \arrow[d]  & 1\\
		&&& (\Z/2\Z)^n
	\end{tikzcd}	
	\end{equation}
	where the row is exact and the homomorphism $U_{n+1}\to (\Z/2\Z)^n$ is given by $(u_{1,2},u_{2,3},\dots, u_{n,n+1})$. The group $\cl{U}_{n+1}$ may be identified with the group of all upper triangular unipotent matrices of size $(n+1)\times (n+1)$ with the entry $(1,n+1)$ omitted.  
	
 The following result is due to Dwyer \cite{dwyer1975homology}.
 
	\begin{thm}\label{dwyer}
  Let $\Gamma$ be a profinite group, $\chi= (\chi_1,\dots,\chi_n)\colon \Gamma\to (\Z/2\Z)^n$ be a continuous group homomorphism, and consider the mod $2$ Massey product $\ang{\chi_1,\dots,\chi_n}$.
  
  (i) If $\tilde{\chi} : \Gamma\to \cl{U}_{n+1}$ is a continuous homomorphism lifting $\chi$, then the collection of 1-cochains $M_{\tilde{\chi}}\coloneqq \set{u_{i,j}\circ \tilde{\chi}}$ forms a defining system for $\ang{\chi_1,\dots,\chi_n}$.
    
(ii) There is a bijective correspondence between continuous lifts  of $\chi$ and defining systems for $\ang{\chi_1,\dots,\chi_n}$, given by sending a lift $\tilde{\chi} : \Gamma\to \cl{U}_{n+1}$ to $M_{\tilde{\chi}}$.
     
(iii) Let $\partial\colon H^1(\Gamma,\cl{U}_{n+1})\to H^2(\Gamma,\Z/2\Z)$ be the connecting homomorphism induced by (\ref{central-ses}). Then \[\ang{\chi_1,\dots,\chi_n}=\set{\partial([\tilde{\chi}]): \tilde{\chi}\colon \Gamma\to \cl{U}_{n+1} \text{ is a continuous lift of $\chi$}}.\]
  
(iv) In particular, $\ang{\chi_1,\dots,\chi_n}$ is defined (resp. vanishes) if and only if $\chi$ lifts to a continuous homomorphism $\Gamma\to \cl{U}_{n+1}$ (resp. $\Gamma\to U_{n+1}$). 
	\end{thm}
	
	\begin{proof}
	See \cite{dwyer1975homology} or \cite[Proposition 2.2]{harpaz2019massey}. Although the theorem had originally been stated by Dwyer for discrete groups $\Gamma$, it can be readily extended to the case of profinite groups.
	\end{proof}

\begin{cor}\label{dwyer-pro-free}
    Let $\Gamma$ be a profinite group such that the group $H^2(\Gamma,\Z/2)$ is trivial. Then, for all $n\geq 1$ and all continuous homomorphisms $\chi_1,\dots,\chi_n\colon\Gamma\to \Z/2\Z$, the Massey product $\ang{\chi_1,\dots,\chi_n}$ is either empty or equal to $\set{0}$.
\end{cor}
    The assumptions of \Cref{dwyer-pro-free} are satisfied, for example, when $\Gamma$ is the absolute Galois group of a field of characteristic $2$ or of a finite field; see \cite[II.\S 3.3(a) and II.\S 2 Proposition 3]{serre2002galois}.
\begin{proof}
   This is an immediate consequence of \Cref{dwyer}(iii).
\end{proof}

We also let 
    \[Q_{n+1}\coloneqq \on{Ker}(U_{n+1}\to (\Z/2\Z)^n),\quad \cl{Q}_{n+1}\coloneqq \on{Ker}(\cl{U}_{n+1}\to (\Z/2\Z)^n)=Q_{n+1}/Z_{n+1}.\]
The induced maps
 \[H^1(F,U_{n+1})\to H^1(F,(\Z/2\Z)^n),\qquad H^1(F,\cl{U}_{n+1})\to H^1(F,(\Z/2\Z)^n)\]
 send a Galois $U_{n+1}$-algebra $L/F$ (resp. a Galois $\cl{U}_{n+1}$-algebra $K/F$) to the $(\Z/2\Z)^n$-algebra $L^{Q_{n+1}}/F$ (resp. $K^{\cl{Q}_{n+1}}/F$).

 \begin{cor}\label{dwyer-cor}
Suppose that $\on{char}(F)\neq 2$, and let $a_1,\dots,a_n\in F^\times$. Then the Massey product $\ang{a_1,\dots,a_n}$ is defined (resp. vanishes) if and only if there exists a Galois $\cl{U}_{n+1}$-algebra $K/F$ (resp. Galois $U_{n+1}$-algebra $L/F$)  such that $K^{\cl{Q}_{n+1}}=F_{a_1,\dots,a_n}$ (resp. $L^{{Q}_{n+1}}=F_{a_1,\dots,a_n}$).
 \end{cor}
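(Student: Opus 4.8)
The plan is to combine Dwyer's theorem (\Cref{dwyer}) with the dictionary between cohomology classes and Galois algebras furnished by the bijection (\ref{galois-alg}). First I would reduce the statement about $\ang{a_1,\dots,a_n}$ to the corresponding statement about the characters $\chi_{a_i}$. By the last sentence of \Cref{massey-galois-ex}(b), the product $\ang{a_1,\dots,a_n}$ is defined (resp. vanishes) if and only if $\ang{\chi_{a_1},\dots,\chi_{a_n}}$ is. Writing $\chi=(\chi_{a_1},\dots,\chi_{a_n})\colon\Gamma_F\to(\Z/2\Z)^n$ and letting $\pi$ denote the projection $\cl{U}_{n+1}\to(\Z/2\Z)^n$ (resp. $U_{n+1}\to(\Z/2\Z)^n$) from (\ref{central-ses}), \Cref{dwyer} then says that this holds if and only if $\chi$ lifts to a homomorphism $\Gamma_F\to\cl{U}_{n+1}$ (resp. $\Gamma_F\to U_{n+1}$) along $\pi$.

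Next I would reinterpret the existence of such a lift cohomologically, i.e.\ as a membership in the image of $\pi_*$. Since $(\Z/2\Z)^n$ is abelian with trivial $\Gamma_F$-action, $H^1(F,(\Z/2\Z)^n)=\on{Hom}(\Gamma_F,(\Z/2\Z)^n)$, so the class of $\chi$ is $\chi$ itself. Moreover, conjugation acts trivially on the abelian target, so for any $[\tilde\chi]\in H^1(F,\cl{U}_{n+1})$ the relation $\pi_*[\tilde\chi]=\chi$ forces some representative $\tilde\chi$ to satisfy $\pi\circ\tilde\chi=\chi$ on the nose. Hence $\chi$ lifts along $\pi$ if and only if $\chi$ lies in the image of $\pi_*\colon H^1(F,\cl{U}_{n+1})\to H^1(F,(\Z/2\Z)^n)$ (resp. of $H^1(F,U_{n+1})\to H^1(F,(\Z/2\Z)^n)$).

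Finally I would translate both sides through (\ref{galois-alg}). By the Kummer-theoretic description in \Cref{kummer-sub}, the Galois $(\Z/2\Z)^n$-algebra corresponding to $\chi$ is precisely $F_{a_1,\dots,a_n}$. By the explicit description of the induced maps recalled just before \Cref{dwyer}, $\pi_*$ sends the class of a Galois $\cl{U}_{n+1}$-algebra $K/F$ (resp. $U_{n+1}$-algebra $L/F$) to the class of $K^{\cl{Q}_{n+1}}/F$ (resp. $L^{Q_{n+1}}/F$). Therefore $\chi$ lies in the image of $\pi_*$ if and only if there is a Galois $\cl{U}_{n+1}$-algebra $K/F$ (resp. $U_{n+1}$-algebra $L/F$) with $K^{\cl{Q}_{n+1}}\cong F_{a_1,\dots,a_n}$ (resp. $L^{Q_{n+1}}\cong F_{a_1,\dots,a_n}$); after transport of structure this isomorphism may be taken to be an equality, matching the statement. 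I expect no serious obstacle: the argument is a functorial translation, and the only points requiring care are the triviality of conjugation on the abelian quotient (used to pass between honest lifts and $H^1$-classes) and the identification of the distinguished class with $F_{a_1,\dots,a_n}$, both of which are immediate from the setup of \Cref{preliminaries}.
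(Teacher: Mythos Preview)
Your proposal is correct and follows exactly the approach the paper intends: the paper's proof is the one-line ``Immediate consequence of \Cref{dwyer}, \Cref{massey-galois-ex}(b) and (\ref{galois-alg}),'' and you have simply unpacked these three ingredients in the expected order. The only extra care you take---checking that conjugation on the abelian quotient is trivial so that $H^1$-lifts correspond to honest homomorphism lifts---is a reasonable detail to make explicit but is not a different idea.
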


\begin{proof}
    Immediate consequence of \Cref{dwyer}(iv), \Cref{massey-galois-ex}(b) and (\ref{galois-alg}).
\end{proof}

		We conclude \Cref{massey-sub} with some observations. We have a cartesian square of groups
	\begin{equation}\label{phi-phi'}
	\begin{tikzcd}
		\cl{U}_{n+1} \arrow[d,"\varphi'_{n+1}"] \arrow[r,"\varphi_{n+1}"] & U_n \arrow[d,"\varphi'_n"] \\
		U_n \arrow[r,"\varphi_n"] & U_{n-1}	
	\end{tikzcd}
	\end{equation}
	where $\phi_n$ and $\phi_{n+1}$ (resp.  $\phi'_n$ and $\phi'_{n+1}$) are given by forgetting the right-most column (resp. the top row).  
	\begin{prop}\label{glueing}
		The map
    \[H^1(F,\cl{U}_{n+1})\to H^1(F,U_n)\times_{(\phi_n)_*,H^1(F,U_{n-1}),(\phi'_n)_*} H^1(F,U_n)\]
    induced by $(\phi'_{n+1}, \phi'_n)$ is surjective.
	\end{prop}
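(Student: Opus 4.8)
The plan is to reduce the statement to a general fact about nonabelian $H^1$ of a fibered product of finite groups one of whose legs is surjective. The first step is to make precise the sense in which (\ref{phi-phi'}) is a \emph{cartesian} square of groups. Reading off the matrix descriptions, $\varphi_{n+1}$ records the top-left $n\times n$ block of a matrix in $\cl{U}_{n+1}$ and $\varphi'_{n+1}$ records the bottom-right $n\times n$ block; these two blocks overlap in the central $(n-1)\times(n-1)$ block, on which they must agree, and together they determine all entries $u_{i,j}$ with $(i,j)\neq(1,n+1)$. Because the entry $(1,n+1)$ has been erased, there is no further data, so
\[\cl{U}_{n+1}\xrightarrow{\ \sim\ } U_n\times_{\varphi'_n,\,U_{n-1},\,\varphi_n}U_n,\qquad g\mapsto(\varphi_{n+1}(g),\varphi'_{n+1}(g)),\]
is an isomorphism of groups. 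I would also record that $\varphi_n$ and $\varphi'_n$ are surjective, which is clear since restricting a unipotent upper-triangular matrix to a principal block is a split surjection.

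Since $\cl{U}_{n+1}$, $U_n$, $U_{n-1}$ are finite constant group schemes over $F$, by (\ref{galois-alg}) the pointed sets $H^1(F,-)$ are identified with the sets of continuous homomorphisms from $\Gamma_F$ modulo conjugation (with trivial Galois action, a $1$-cocycle is a homomorphism and the coboundary relation is conjugation). I would therefore argue entirely at the level of homomorphisms. Given a class in the target fibered product, represent it by homomorphisms $\rho_1,\rho_2\colon\Gamma_F\to U_n$ such that $\varphi_n\circ\rho_1$ and $\varphi'_n\circ\rho_2$ are \emph{conjugate} in $U_{n-1}$, say $\varphi'_n(\rho_2(\sigma))=g^{-1}\,\varphi_n(\rho_1(\sigma))\,g$ for a fixed $g\in U_{n-1}$ and all $\sigma\in\Gamma_F$. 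Using surjectivity of $\varphi'_n$, lift $g$ to $\tilde g\in U_n$ and replace $\rho_2$ by the conjugate homomorphism $\tilde g\,\rho_2(-)\,\tilde g^{-1}$, which represents the same class; after this replacement $\varphi'_n\circ\rho_2=\varphi_n\circ\rho_1$ as honest homomorphisms. Then $\sigma\mapsto(\rho_1(\sigma),\rho_2(\sigma))$ lands in $U_n\times_{U_{n-1}}U_n\cong\cl{U}_{n+1}$ and defines a homomorphism $\rho\colon\Gamma_F\to\cl{U}_{n+1}$ whose images under $\varphi_{n+1}$ and $\varphi'_{n+1}$ are $\rho_1$ and $\rho_2$. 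Its class in $H^1(F,\cl{U}_{n+1})$ maps to the prescribed pair, which proves surjectivity.

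The only real subtlety, and the single place where I expect to have to be careful, is that we are working with pointed sets rather than groups, so I cannot invoke an exact sequence or diagram chase; the whole argument must be run at the homomorphism (cochain) level. The crucial input is the lifting of the conjugating element $g$ through one of the surjections $\varphi_n,\varphi'_n$: this is exactly what upgrades ``conjugate over $U_{n-1}$'' to ``equal'', and it is the reason the natural map into the fibered product is surjective here, even though such maps need not be surjective for a general cartesian square of nonabelian groups. For the same reason it is essential to work with $\cl{U}_{n+1}$, for which the square is cartesian, rather than with $U_{n+1}$, where the extra $(1,n+1)$ entry is data not determined by the two blocks. Finally I would note that the homomorphism $\rho$ is generally not unique up to conjugacy, so the map is typically far from injective; only surjectivity is asserted and needed.
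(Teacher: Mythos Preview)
Your proposal is correct and follows essentially the same argument as the paper's own proof: both identify $\cl{U}_{n+1}$ with the fibered product $U_n\times_{U_{n-1}}U_n$, pass to $\on{Hom}(\Gamma_F,-)$ where the cartesian property persists, and then use surjectivity of $\varphi'_n$ to lift the conjugating element so that the two homomorphisms agree on the nose over $U_{n-1}$ and can be glued. Your additional remarks on why surjectivity of $\varphi'_n$ is the crux, and why the map need not be injective, are accurate but not needed for the statement.
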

	
	\begin{proof}
		Since (\ref{phi-phi'}) is cartesian, to give a continuous group homomorphism $\Gamma_F\to \cl{U}_{n+1}$ is the same as giving two continuous homomorphisms $f,f'\colon \Gamma_F\to U_n$ such that $\varphi_n\circ f=\varphi_n'\circ f'$. It follows that the square of pointed sets
		\begin{equation}\label{hom-cartesian}
		\begin{tikzcd}
			\on{Hom}(\Gamma_F,\cl{U}_{n+1}) \arrow[d,"(\varphi'_{n+1})_*"] \arrow[r,"(\varphi_{n+1})_*"] & \on{Hom}(\Gamma_F,U_n) \arrow[d,"(\varphi'_n)_*"] \\
			\on{Hom}(\Gamma_F,U_n) \arrow[r,"(\varphi_n)_*"] & \on{Hom}(\Gamma_F,U_{n-1})	
		\end{tikzcd}
		\end{equation}
		is also cartesian. 
		
		Recall that, for every finite group $G$, we have $H^1(F,G)=\on{Hom}(\Gamma_F,G)/\sim$, where if $\psi,\psi'\colon\Gamma_F\to G$ are continuous group homomorphisms, we write $\psi\sim \psi'$ if and only if there exists $g\in G$ such that $g\psi' g^{-1}=\psi$.  Suppose given $\psi,\psi'\colon\Gamma_F\to U_n$ and $g\in U_{n-1}$ such that \[g((\phi'_n)_*(\psi'))g^{-1}=(\phi_n)_*(\psi)\quad \text{in $\on{Hom}(\Gamma_F,U_{n-1})$.}\] 
  Let $\tilde{g}\in U_n$ be such that $\phi'_n(\tilde{g})=g$. Then $(\phi'_n)_*(\tilde{g}\psi'\tilde{g}^{-1})=(\phi_n)_*(\psi)$. Since (\ref{hom-cartesian}) is cartesian, there exists $\Psi\in \on{Hom}(\Gamma_F,\cl{U}_{n+1})$ lifting $\psi$ and $\tilde{g}\psi'\tilde{g}^{-1}$. This completes the proof.
	\end{proof}
	
\begin{lemma}\label{differ-by-scalar}
		Suppose that $\on{char}(F)\neq 2$. Let $n\geq 2$ be an integer and $K$ be a Galois $\cl{U}_{n+1}$-algebra over $F$. Suppose that there exists a Galois $U_{n+1}$-algebra $L$ such that $L^{Z_{n+1}}\simeq K$, and write $L=K_\pi$ for some $\pi\in K^\times$. Then:
  
  (a) for all $t\in F^\times$ the $F$-algebra $K_{t\pi}$ has the structure of a Galois $U_{n+1}$-algebra such that $K_{t\pi}^{Z_{n+1}}=K$, and
  
  (b) all Galois $U_{n+1}$-algebras $E$ such that $E^{Z_{n+1}}\simeq K$ arise in this way.
	\end{lemma}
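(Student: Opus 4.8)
The plan is to first pin down the precise form of the $U_{n+1}$-action on a Galois $U_{n+1}$-algebra of the shape $K_\pi$ with $K_\pi^{Z_{n+1}}=K$, and then to use this normal form to build the twists in (a) and to classify all lifts in (b). Write $z$ for the nontrivial element of $Z_{n+1}\simeq\Z/2\Z$, and for $\sigma\in U_{n+1}$ let $\bar\sigma\in\cl{U}_{n+1}$ be its image. Since $z$ lies in the kernel of $U_{n+1}\to\cl{U}_{n+1}$, it fixes $K=K_\pi^{Z_{n+1}}$ pointwise and hence acts $K$-linearly on $L=K_\pi=K\oplus K\sqrt\pi$. The key structural claim is that for every $\sigma\in U_{n+1}$ one has $\sigma(\sqrt\pi)=b_\sigma\sqrt\pi$ for a unique $b_\sigma\in K^\times$ with $b_\sigma^2=\bar\sigma(\pi)/\pi$, and that $b_z=-1$. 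Writing $\sigma(\sqrt\pi)=a+b\sqrt\pi$ with $a,b\in K$ and expanding $\sigma(\sqrt\pi)^2=\bar\sigma(\pi)\in K$ forces $ab=0$ (here $\on{char}(F)\neq2$ is used). That $z$ is a $K$-linear involution with $z(\sqrt\pi)^2=\pi$ and fixed algebra exactly $K$ gives $z(\sqrt\pi)=-\sqrt\pi$; combined with the centrality of $z$ this rules out $a\neq0$, leaving $\sigma(\sqrt\pi)=b_\sigma\sqrt\pi$ with $b_\sigma\in K^\times$. Finally $\sigma\tau(\sqrt\pi)=\sigma(\tau(\sqrt\pi))$ records the cocycle identity $b_{\sigma\tau}=\bar\sigma(b_\tau)\,b_\sigma$.

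For part (a), I reuse the same scalars $b_\sigma$ to define an action of $U_{n+1}$ on $K_{t\pi}=K\oplus K\sqrt{t\pi}$: let $\sigma$ act on $K$ through $\bar\sigma$ and set $\sigma(\sqrt{t\pi})\coloneqq b_\sigma\sqrt{t\pi}$. Since $t\in F^\times$ is fixed, the relation $b_\sigma^2=\bar\sigma(\pi)/\pi$ yields $\sigma(\sqrt{t\pi})^2=\sigma(t\pi)$, so each $\sigma$ is an $F$-algebra automorphism, and the cocycle identity for $b_\sigma$ shows these automorphisms compose correctly, giving a genuine $U_{n+1}$-action. As $b_z=-1$, the element $z$ negates $\sqrt{t\pi}$ and fixes $K$, whence $K_{t\pi}^{Z_{n+1}}=K$. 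Finally $\dim_F K_{t\pi}=2\dim_F K=|U_{n+1}|$ and $K_{t\pi}^{U_{n+1}}=K^{\cl{U}_{n+1}}=F$, so $K_{t\pi}$ is a Galois $U_{n+1}$-algebra.

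For part (b), let $E$ be a Galois $U_{n+1}$-algebra with $E^{Z_{n+1}}\simeq K$. Identifying $E^{Z_{n+1}}$ with $K$, the algebra $E$ is quadratic étale over $K$, hence $E\simeq K_\rho$ for some $\rho\in K^\times$, and the structural step supplies $c_\sigma\in K^\times$ with $\sigma(\sqrt\rho)=c_\sigma\sqrt\rho$, $c_\sigma^2=\bar\sigma(\rho)/\rho$ and $c_z=-1$. Set $d_\sigma\coloneqq c_\sigma/b_\sigma$. Then $d$ is again a $1$-cocycle with values in $K^\times$, and $d_z=1$ together with $d_{\sigma z}=d_\sigma$ shows $d_\sigma$ depends only on $\bar\sigma$, so $d$ descends to a $1$-cocycle $\cl{U}_{n+1}\to K^\times$. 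By Hilbert's Theorem $90$ for the Galois $\cl{U}_{n+1}$-algebra $K$, this cocycle is a coboundary, i.e.\ $d_\sigma=\bar\sigma(w)/w$ for some $w\in K^\times$. Squaring and comparing with $c_\sigma^2/b_\sigma^2=\bar\sigma(\rho/\pi)/(\rho/\pi)$ gives $\sigma\bigl((\rho/\pi)w^{-2}\bigr)=(\rho/\pi)w^{-2}$ for all $\sigma$, so $t\coloneqq(\rho/\pi)w^{-2}$ lies in $K^{\cl{U}_{n+1}}=F$, in fact in $F^\times$, and $\rho=tw^2\pi$. The $K$-algebra isomorphism $K_{t\pi}\to E$ sending $\sqrt{t\pi}\mapsto w^{-1}\sqrt\rho$ is then $U_{n+1}$-equivariant, precisely because $c_\sigma=b_\sigma\,\bar\sigma(w)/w$, giving $E\simeq K_{t\pi}$ as Galois $U_{n+1}$-algebras.

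The main obstacle is the structural step that diagonalizes the action as $\sigma(\sqrt\pi)=b_\sigma\sqrt\pi$; once the action is in this normal form, both (a) and (b) reduce to cocycle bookkeeping, the only genuinely nonformal input being the vanishing of $H^1(\cl{U}_{n+1},K^\times)$ for the Galois algebra $K$ used in (b).
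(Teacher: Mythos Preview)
Your proof is correct, but takes a different and more hands-on route than the paper.  The paper's argument is a one-liner from non-abelian Galois cohomology: passing the central extension $1\to Z_{n+1}\to U_{n+1}\to\cl{U}_{n+1}\to 1$ to cohomology gives an exact sequence of pointed sets
\[F^\times/F^{\times 2}\to H^1(F,U_{n+1})\to H^1(F,\cl{U}_{n+1}),\]
and the standard twisting action of $F^\times/F^{\times 2}$ is transitive on each fiber; a cocycle computation identifies this action with $t\cdot[K_\pi]=[K_{t\pi}]$, settling (a) and (b) simultaneously.

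Your argument instead unwinds this machinery explicitly: you first put the $U_{n+1}$-action into the normal form $\sigma(\sqrt\pi)=b_\sigma\sqrt\pi$ (using centrality of $z$ to exclude the off-diagonal possibility), then build the twisted action on $K_{t\pi}$ by hand, and finally reduce an arbitrary lift to a twist via Hilbert~90 for the Galois $\cl{U}_{n+1}$-algebra $K$.  This is longer but entirely self-contained, avoiding any appeal to the general formalism of fibers and twisting in non-abelian $H^1$; in effect you are reproving, in this special case, exactly the cohomological facts the paper invokes.  The Hilbert~90 you use (for a Galois algebra that need not be a field) is valid, reducing by Shapiro's lemma to the classical case.
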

	
	\begin{proof}
  Passing to Galois cohomology in (\ref{central-ses}) yields an exact sequence of pointed sets
		\[1\to F^\times/F^{\times 2}\to H^1(F,U_{n+1})\to H^1(F,\cl{U}_{n+1}).\]
		The group $F^\times/F^{\times 2}$ acts transitively on the fibers of $H^1(F,U_{n+1})\to H^1(F,\cl{U}_{n+1})$. A simple cocycle calculation shows that $t\in F^\times/F^{\times 2}$ sends the class of $K_\pi/F$  to the class of $K_{t\pi}/F$. This proves (a) and (b) at once. 
	\end{proof}

	\subsection{Specialization}\label{specialize-sub}
    In this section, we prove some specialization properties of Massey products. They can be useful to avoid case-by-case analysis, for example when certain quantities become zero or undefined.
    
	\begin{lemma}\label{specialize}
		Let $R$ be a discrete valuation ring with fraction field $K$ and residue field $k$, and suppose that $\on{char}(k)\neq 2$. Let $a_1,\dots,a_n\in R^\times$, and let $\cl{a}_1,\dots,\cl{a}_n\in k^{\times}$ be their reductions. 
		
		(a) If the Massey product $\ang{a_1,\dots,a_n}$ is defined over $K$, then $\ang{\cl{a}_1,\dots,\cl{a}_n}$ is defined over $k$.
		
		(b) If the Massey product $\ang{a_1,\dots,a_n}$ vanishes over $K$, then $\ang{\cl{a}_1,\dots,\cl{a}_n}$ vanishes over $k$.
	\end{lemma}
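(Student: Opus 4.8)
The plan is to recast both statements as lifting problems for Galois characters via Dwyer's theorem and then to transport a lift from the generic point to the special point along a section of the reduction map on Galois groups. Set $\chi\coloneqq(\chi_{a_1},\dots,\chi_{a_n})\colon\Gamma_K\to(\Z/2\Z)^n$ and $\cl{\chi}\coloneqq(\chi_{\cl{a}_1},\dots,\chi_{\cl{a}_n})\colon\Gamma_k\to(\Z/2\Z)^n$. By \Cref{massey-galois-ex}(b) and \Cref{dwyer}, applied over $K$ and over $k$, part (a) amounts to the implication: if $\chi$ lifts to a homomorphism $\Gamma_K\to\cl{U}_{n+1}$, then $\cl{\chi}$ lifts to a homomorphism $\Gamma_k\to\cl{U}_{n+1}$; part (b) is the identical statement with $U_{n+1}$ in place of $\cl{U}_{n+1}$. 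As the two cases differ only in the target, I would argue uniformly, writing $V$ for either $U_{n+1}$ or $\cl{U}_{n+1}$, and recalling that $V$ is a finite $2$-group. I would first reduce to the case that $R$ is henselian, by replacing $R$ with its henselisation $R^h$: its fraction field $K^h$ contains $K$, its residue field is again $k$, and the $a_i$ remain units with unchanged reductions $\cl{a}_i$. Restriction along $\Gamma_{K^h}\to\Gamma_K$ carries a lift $\Gamma_K\to V$ of $\chi$ to a lift $\Gamma_{K^h}\to V$ of the analogous character of $\Gamma_{K^h}$, so it is enough to prove the implication over $K^h$. From now on $R$ is henselian, and there is a canonical surjection $\pi\colon\Gamma_{K^h}\twoheadrightarrow\Gamma_k$ with kernel the inertia subgroup $I$.

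Next I would exploit that $\chi$ is unramified. Since $\on{char}(k)\neq 2$ we have $2\in R^\times$, so each $R[x]/(x^2-a_i)$ is finite \'etale over $R$; equivalently $K^h(\sqrt{a_i})/K^h$ is unramified, with special fibre $k(\sqrt{\cl{a}_i})/k$. Hence $\chi$ is trivial on $I$ and factors as $\chi=\cl{\chi}\circ\pi$.

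The crux of the argument is that $\pi$ admits a continuous section after passage to the tame quotient. Any homomorphism into the $2$-group $V$ annihilates the wild inertia (a pro-$p$ group for $p=\on{char}(k)$, trivial if $p=0$, and $p\neq 2$ by hypothesis), hence factors through the maximal tame quotient $\Gamma^t$ of $\Gamma_{K^h}$; the same is true for $\chi$, which kills all of $I$. Now the tame sequence $1\to I^t\to\Gamma^t\xrightarrow{\pi^t}\Gamma_k\to 1$ is split: choosing a uniformiser $\varpi\in R^h$, the subgroup of $\Gamma^t$ fixing $\bigcup_n K^h(\varpi^{1/n})$ maps isomorphically to $\Gamma_k$, because this union is totally ramified over $K^h$ while $\Gamma_k$ is realised by the maximal unramified subextension. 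This yields a continuous section $s\colon\Gamma_k\to\Gamma^t$ of $\pi^t$.

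Finally I would assemble the lift. Given a lift $\tilde\chi\colon\Gamma_{K^h}\to V$ of $\chi$, let $\hat\chi\colon\Gamma^t\to V$ be the induced homomorphism on the tame quotient. Writing $\rho\colon V\to(\Z/2\Z)^n$ for the projection, the relation $\rho\circ\hat\chi=\cl{\chi}\circ\pi^t$ holds, as one checks after precomposing with the surjection $\Gamma_{K^h}\to\Gamma^t$, where it reduces to $\rho\circ\tilde\chi=\chi=\cl{\chi}\circ\pi$. Composing with $s$ and using $\pi^t\circ s=\on{id}$ gives $\rho\circ(\hat\chi\circ s)=\cl{\chi}$, so $\hat\chi\circ s\colon\Gamma_k\to V$ is a lift of $\cl{\chi}$; applying \Cref{dwyer} over $k$ then proves (a) (taking $V=\cl{U}_{n+1}$) and (b) (taking $V=U_{n+1}$). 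I expect the main obstacle to be the splitting of the tame inertia sequence over a henselian field with arbitrary residue field, together with the justification that one may work tamely; this is precisely where $\on{char}(k)\neq 2$ is used, guaranteeing that the $2$-group targets see only the (split) tame part of the ramification.
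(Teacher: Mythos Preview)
Your argument is correct and follows essentially the same strategy as the paper: translate via Dwyer's theorem into a lifting problem for homomorphisms into a finite $2$-group, then transport a lift from $\Gamma_K$ to $\Gamma_k$ using a section coming from the totally tamely ramified tower $\bigcup_n K(\varpi^{1/n})$. The paper passes to the completion rather than the henselisation and phrases the section as restriction to the subgroup $\Gamma_{K_\infty}\subset\Gamma_K$ (noting that $\Gamma_{K_\infty}\twoheadrightarrow\Gamma_k$ has pro-$q$ kernel, hence induces a bijection on $\on{Hom}(-,G)$ for finite $2$-groups $G$), whereas you first quotient by wild inertia and then split the tame sequence; these are two descriptions of the same construction.
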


	\begin{proof}
		The completion of $R$ has residue field equal to $k$ and fraction field containing $K$. We may thus replace $R$ by its completion and assume that $R$ is complete.  
		
		Let $q\geq 1$ be the characteristic exponent of $K$, that is, $q=\on{char}(K)$ if $\on{char}(K)>0$ and $q=1$ if $\on{char}(K)=0$. Let $\pi\in R$ be a uniformizer. For every $d\geq 1$ prime to $q$, choose a $d$-th root $\pi^{1/d}$ of $\pi$ such that for all $d_1,d_2\geq 1$ prime to $q$ we have $(\pi^{1/d_1d_2})^{d_1}=\pi^{1/d_2}$. Define $K_\infty\coloneqq \cup_d K(\pi^{1/d})$, where $d\geq 1$ ranges over all integers prime to $q$.
		Let $\Delta \coloneqq  \Gamma_{K_{\infty}}$, and define $L \coloneqq  K_{\on{nr}} K_{\infty}$. Then $\Gamma_L$ is a pro-$q$-group (trivial if $q=1$) and 
		$\on{Gal}(L/K_{\infty}) = \on{Gal}(K_{\on{nr}}/K) = \Gamma_k$.
		It follows that for every finite $2$-group $G$ the natural map
		\[\on{Hom}(\Gamma_k, G) = \on{Hom}(\on{Gal}(L/K_{\infty}), G) \to \on{Hom}(\Delta, G)\]
		is an isomorphism. The map
		\[\on{Hom}(\Gamma_K, G) \to \on{Hom}(\Delta, G) = \on{Hom}(\Gamma_k, G),\] 
		induces a specialization map
		\[s\colon H^1(K,G)\to H^1(k,G).\]
		which is covariant in $G$. 
		
        For all $a\in R^\times$, we denote by $\cl{a}\in k^\times$ the reduction of $a$ modulo the maximal ideal of $R$. Under the identification of (\ref{kummer}), $s\colon H^1(K,\Z/2\Z)\to H^1(k,\Z/2\Z)$ sends $a K^{\times 2}$ to $\cl{a} k^{\times 2}$. Therefore for all $n\geq 1$:
		\begin{equation}\label{true-specialization}\text{The map $s\colon H^1(K,(\Z/2\Z)^n)\to H^1(k,(\Z/2\Z)^n)$ sends $(a_1,\dots,a_n)$ to $(\cl{a}_1,\dots,\cl{a}_n)$.}
		\end{equation}
		
		(a) We have a commutative square of pointed sets
		\begin{equation}\label{special}
			\begin{tikzcd}
				H^1(K,\cl{U}_{n+1})  \arrow[d] \arrow[r,"s"] &  H^1(k,\cl{U}_{n+1}) \arrow[d]  \\  
				H^1(K, (\Z/2\Z)^n) \arrow[r,"s"]  &  H^1(k, (\Z/2\Z)^n).
			\end{tikzcd}
		\end{equation}
		By \Cref{dwyer-cor}, $(a_1,\dots,a_n)$ lifts to $H^1(K,\cl{U}_{n+1})$, hence (\ref{true-specialization}) and (\ref{special}) imply that  $(\cl{a}_1,\dots,\cl{a}_n)\in H^1(k, (\Z/2\Z)^n)$ lifts to $H^1(k,\cl{U}_{n+1})$. The conclusion follows from \Cref{dwyer}(iv).
		
		(b) We may argue as in the proof of (a), replacing $\cl{U}_{n+1}$ by $U_{n+1}$ everywhere.
	\end{proof}
	\begin{prop}\label{specialize-variety}
		Let $a_1,\dots,a_n\in F^{\times}$, let $X$ be an $F$-variety with a regular $F$-point. Then we have the following.
		
		(a) The Massey product $\ang{a_1,\dots,a_n}$ is defined over $F$ if and only if it is defined over $F(X)$.
		
		(b) The Massey product $\ang{a_1,\dots,a_n}$ vanishes over $F$ if and only if it vanishes over $F(X)$.
	\end{prop}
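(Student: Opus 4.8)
The plan is to combine two ingredients: the functoriality of Massey products, which settles the easy implications, and the specialization \Cref{specialize} together with the regularity of the chosen $F$-point, which settles the hard ones. Throughout I take $\on{char}(F)\neq 2$ (implicit in writing $\ang{a_1,\dots,a_n}$ for $a_i\in F^\times$), and since every field occurring below contains $F$ it too has characteristic $\neq 2$, so \Cref{specialize} always applies.

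For the ``only if'' directions of (a) and (b), I would argue by functoriality. The restriction homomorphism $\Gamma_{F(X)}\to \Gamma_F$ induces a morphism of DGAs $C^{\smallbullet}(\Gamma_F,\Z/2\Z)\to C^{\smallbullet}(\Gamma_{F(X)},\Z/2\Z)$ carrying each class $\chi_{a_i}$ over $F$ to the class $\chi_{a_i}$ over $F(X)$; it therefore sends any defining system for $\ang{a_1,\dots,a_n}$ over $F$ to one over $F(X)$ and preserves the value of the product. Hence if $\ang{a_1,\dots,a_n}$ is defined (resp. vanishes) over $F$, the same holds over $F(X)$. (Equivalently, by \Cref{dwyer} a lift of $\chi$ to $\cl{U}_{n+1}$, resp. $U_{n+1}$, over $F$ restricts to one over $F(X)$.)

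The content lies in the converse implications. Let $x\in X(F)$ be the given regular point and set $R\coloneqq O_{X,x}$; since $X$ is integral and $x$ is a regular $F$-point, $R$ is a regular local $F$-algebra whose fraction field is $F(X)$ and whose residue field is $F$, the structure map $F\to R$ being a section of the reduction $R\to F$, so each $a_i\in F^\times$ is a unit of $R$ reducing to itself. I would then reduce (a) and (b) to the following claim, proved by induction on $d\coloneqq \dim R$: \emph{for every regular local $F$-algebra $R$ with residue field $F$ and fraction field $K$, and all $a_1,\dots,a_n\in F^\times$, if $\ang{a_1,\dots,a_n}$ is defined (resp. vanishes) over $K$, then it is defined (resp. vanishes) over $F$.} If $d=0$ then $K=F$ and there is nothing to prove. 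For $d\geq 1$ I would choose a regular system of parameters $t_1,\dots,t_d$ of $R$. Then $\mathfrak{p}\coloneqq (t_1)$ is a height-one prime (as $R/(t_1)$ is a regular local ring, hence a domain), the localization $R_{\mathfrak{p}}$ is a discrete valuation ring with fraction field $K$ and residue field $\kappa(\mathfrak{p})=\on{Frac}(R/\mathfrak{p})$, and each $a_i\in F^\times$ is a unit of $R_{\mathfrak{p}}$ reducing to $a_i\in \kappa(\mathfrak{p})^\times$. Applying \Cref{specialize} to $R_{\mathfrak{p}}$ shows that if $\ang{a_1,\dots,a_n}$ is defined (resp. vanishes) over $K$, then it is defined (resp. vanishes) over $\kappa(\mathfrak{p})$. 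Finally $R/\mathfrak{p}$ is again a regular local $F$-algebra with residue field $F$, of dimension $d-1$ and fraction field $\kappa(\mathfrak{p})$, so the induction hypothesis lets the property descend from $\kappa(\mathfrak{p})$ to $F$, completing the step.

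I expect the only genuine obstacle to be that \Cref{specialize} handles a single discrete valuation ring, whereas $O_{X,x}$ may have large dimension. The device of peeling off one parameter at a time---localizing at the height-one prime it cuts out to obtain a DVR, then passing to the quotient to drop the dimension by one---is precisely what bridges this gap; the rest is bookkeeping, the scalars $a_i$ remaining the fixed elements of $F^\times$ at every stage.
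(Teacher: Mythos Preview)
Your proposal is correct and follows essentially the same approach as the paper: both peel off one regular parameter at a time, obtaining at each stage a discrete valuation ring to which \Cref{specialize} applies, and chain these together to descend from $F(X)$ to $F$. The only cosmetic difference is that you phrase this as an induction on $\dim O_{X,x}$, whereas the paper writes out the chain of DVRs $O_1,\dots,O_d$ explicitly.
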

	
	\begin{proof}
		It is clear that if $\ang{a_1,\dots,a_n}$ is defined (resp. it vanishes) over $F$, then it is defined (resp. it vanishes) over $F(X)$.
		
		For the converse, let $x\in X(F)$ be a regular $F$-point, that is, the local ring $O_{X,x}$ is regular.  Let $d$ be the dimension of $X$ and $t_1,\dots,t_d\in O_{X,x}$ be a regular system of parameters. For every $i=1,\dots,d$, let $O_i$ be the localization of $O_{X,x}/(t_1,\dots,t_{i-1})$ at the prime ideal generated by the image of $t_i$. Since $O_{X,x}$ is regular, the quotient of a regular local ring by a non-zero divisor is regular, and the localization of a regular local ring is regular, every $O_i$ is a regular local ring of dimension $1$, that is, a discrete valuation ring. Moreover, the fraction field of $O_1$ is $F(X)$, the residue field of $O_d$ is $F$, and for all $i=1,\dots,d-1$ the residue field of $O_i$ coincides with the fraction field of $O_{i+1}$. Now (a) (resp. (b)) follows from \Cref{specialize}(a) (resp. (b)), applied to $R=O_i$ for $i=1,\dots,d$.
	\end{proof}

	\begin{rmk}
	Let $p$ be a prime number, and suppose that $\on{char}(F)\neq p$ and that $F$ contains a primitive $p$-th root of unity. Then the definition of mod $p$ Massey product given in \Cref{massey-galois-ex}(b) for $p=2$ extends to all $p$. The constructions and the results of \Cref{massey-sub} and \Cref{specialize-sub} 
 extend to arbitrary $p$, with the same proofs.
	\end{rmk}

	\section{Massey products and Galois algebras}\label{u5-bar-section}

    From now on in this paper, we suppose that $\on{char}(F)\neq 2$. 
	
	\subsection{Galois \texorpdfstring{$U_3$}{U3}-algebras}
	
	Let $a,b\in F^\times$, and suppose that $(a,b)=0$ in $\on{Br}(F)$. We write $(\Z/2\Z)^2=\ang{\sigma_a,\sigma_b}$, and we view $F_{a,b}$ as a Galois $(\Z/2\Z)^2$-algebra as in \Cref{kummer-sub}.
	Let $\alpha\in F_a^\times$ satisfy $N_a(\alpha)=bx^2$ for some $x\in F^\times$, and consider the  \'etale $F$-algebra $(F_{a,b})_\alpha$. We have
	\begin{align*}U_3=\ang{\sigma_a,\sigma_b: \sigma_a^2=\sigma_b^2=[\sigma_a,\sigma_b]^2=1}.	
	\end{align*}	
	Moreover, $\cl{U}_3=(\Z/2\Z)^2$ and the surjective homomorphism $U_3\to \cl{U}_3$ is given by $\sigma_a\mapsto \sigma_a$ and $\sigma_b\mapsto \sigma_b$.   
 Observe that $\sigma_a(\alpha)=bx^2/\alpha$ and $\sigma_b(\alpha)=\alpha$. We may thus define a Galois $U_3$-algebra structure on $(F_{a,b})_\alpha$ by letting $U_3$ act on $F_{a,b}$ via $\cl{U}_3$ and by setting	
	\begin{equation}\label{u3-algebra}
		\sigma_a(\sqrt{\alpha})=x\sqrt{b}/\sqrt{\alpha},\qquad \sigma_b(\sqrt{\alpha})=\sqrt{\alpha}.
	\end{equation} 
    
We leave to the reader the verification that $\sigma_a^2=\sigma_b^2=[\sigma_a,\sigma_b]^2=1$ on $(F_{a,b})_\alpha$, that $(F_{a,b})_{\alpha}$ is a Galois $U_3$-algebra and that the subalgebra of $Q_3$-invariants is $F_{a,b}$.
 
	\begin{lemma}\label{minus-x}
	Let $K^+\coloneqq (F_{a,b})_\alpha$ with the Galois $U_3$-algebra structure of (\ref{u3-algebra}) and $K^-\coloneqq(F_{a,b})_\alpha$, with the Galois $U_3$-algebra structure given by 
 \begin{equation}\label{u3-algebra-minus}
		\sigma_a(\sqrt{\alpha})=-x\sqrt{b}/\sqrt{\alpha},\qquad \sigma_b(\sqrt{\alpha})=\sqrt{\alpha}.
	\end{equation} Then $\sigma_b\colon K^+\to K^{-}$ is an isomorphism of Galois $U_3$-algebras.
	\end{lemma}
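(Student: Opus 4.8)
The plan is to exhibit the claimed isomorphism explicitly and to check, on algebra generators, that it intertwines the two $U_3$-actions. First I would record that $K^+$ and $K^-$ have the \emph{same} underlying \'etale $F$-algebra $(F_{a,b})_\alpha$, and that the two Galois structures (\ref{u3-algebra}) and (\ref{u3-algebra-minus}) assign \emph{the same} $F$-algebra automorphism to $\sigma_b$: in both cases $\sigma_b$ acts on $F_{a,b}$ via $\cl{U}_3=(\Z/2\Z)^2$ (fixing $\sqrt a$, negating $\sqrt b$) and fixes $\sqrt\alpha$. Call this common automorphism $\phi$. Being an $F$-algebra automorphism, $\phi$ is in particular an $F$-algebra isomorphism $(F_{a,b})_\alpha\to (F_{a,b})_\alpha$, so the only thing left to verify is that $\phi$, regarded as a map with source $K^+$ and target $K^-$, is $U_3$-equivariant.

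The set of $g\in U_3$ for which $\phi(g\cdot_{K^+}v)=g\cdot_{K^-}\phi(v)$ holds for all $v$ is closed under multiplication and contains the identity, hence (as $U_3$ is finite) is a subgroup; since $U_3=\ang{\sigma_a,\sigma_b}$ it suffices to check the two generators, and it suffices to check them on the $F$-algebra generators $\sqrt a,\sqrt b,\sqrt\alpha$. For $g=\sigma_b$ equivariance is automatic: $\sigma_b$ acts as $\phi$ in \emph{both} structures, so both sides equal $\phi^2(v)$.

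The only substantive case is $g=\sigma_a$, i.e. the identity $\phi\circ\sigma_a^{+}=\sigma_a^{-}\circ\phi$, where $\sigma_a^{\pm}$ denotes the action of $\sigma_a$ in $K^{\pm}$. On $\sqrt a$ and $\sqrt b$ both composites agree (each negates $\sqrt a$, resp. $\sqrt b$), since $\phi$ commutes with the $\cl{U}_3$-action on $F_{a,b}$. The sign is precisely what makes the case $\sqrt\alpha$ work: applying $\sigma_a^{+}$ and then $\phi$ gives $\phi(x\sqrt b/\sqrt\alpha)=-x\sqrt b/\sqrt\alpha$, because $\phi$ fixes $x\in F$ and $\sqrt\alpha$ while negating $\sqrt b$; applying $\phi$ and then $\sigma_a^{-}$ gives $\sigma_a^{-}(\sqrt\alpha)=-x\sqrt b/\sqrt\alpha$ by (\ref{u3-algebra-minus}). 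The two coincide, so $\phi$ is a $U_3$-equivariant $F$-algebra isomorphism, i.e. an isomorphism of Galois $U_3$-algebras $K^+\to K^-$.

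I expect no serious obstacle here: the entire statement reduces to the observation that conjugating the $\sigma_a$-action by $\sigma_b$ flips the sign of $\sqrt b$, which is exactly the discrepancy between (\ref{u3-algebra}) and (\ref{u3-algebra-minus}). Equivalently, one can phrase the matter structurally: the central generator $[\sigma_a,\sigma_b]\in Z_3$ acts on $(F_{a,b})_\alpha$ by $\sqrt\alpha\mapsto-\sqrt\alpha$, and $K^-$ is obtained from $K^+$ by precomposing the $\sigma_a$-action with this central element, so that conjugation by $\phi=\sigma_b$ furnishes the isomorphism. The one point demanding care is bookkeeping the direction of the map (source $K^+$, target $K^-$) together with the facts that $\phi$ fixes $x$ and $\sqrt\alpha$ and negates $\sqrt b$.
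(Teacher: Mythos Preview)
Your proof is correct and follows essentially the same approach as the paper: both take $\sigma_b$ as the underlying $F$-algebra automorphism and verify $U_3$-equivariance by checking on $\sqrt{\alpha}$ that $\sigma_b$ carries $\sigma_a^{+}(\sqrt{\alpha})=x\sqrt{b}/\sqrt{\alpha}$ to $-x\sqrt{b}/\sqrt{\alpha}=\sigma_a^{-}(\sqrt{\alpha})$, with the compatibility on $F_{a,b}$ reducing to the commutativity of $(\Z/2\Z)^2$. Your write-up is somewhat more explicit (you also spell out the $\sigma_b$-equivariance and the structural interpretation via the central element $[\sigma_a,\sigma_b]$), but the argument is the same.
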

	
    Note that $\sigma_b\colon K^+\to K^{-}$ is not $F_{a,b}$-linear.
 
	\begin{proof}
	For all $\eta \in (F_{a,b})_{\alpha}$, we write $\eta^+$ and $\eta^-$ for the element $\eta$, viewed as an element of $K^+$ and $K^-$, respectively. Then $\sigma_b\colon K^+\to K^{-}$ sends $\eta^+$ to $\sigma_b(\eta^+)=\sigma_b(\eta)^-$. Since $\sigma_b$ is an isomorphism of \'etale algebras, it suffices to check that it is compatible with the Galois $U_3$-algebra structures. This follows from the fact that $\sigma_a\sigma_b=\sigma_b\sigma_a$ on $F_{a,b}$ and
 \[\sigma_b(\sigma_a(\sqrt{\alpha}^+))=\sigma_b(x\sqrt{b}^+/\sqrt{\alpha}^+)=-x\sqrt{b}^-/\sqrt{\alpha}^-=\sigma_a(\sqrt{\alpha}^-)=\sigma_a(\sigma_b(\sqrt{\alpha}^+)).\qedhere\]
	\end{proof}
	
	Symmetrically, if 
	$\beta\in F_b^\times$ satisfies $N_b(\beta)=ay^2$ for some $y\in F^\times$, 
	the \'etale $F$-algebra $(F_{a,b})_\beta$ has structure of a Galois $U_3$-algebra defined by
	\begin{equation}\label{u3-algebra-2}\sigma_a(\sqrt{\beta})=\sqrt{\beta},\qquad \sigma_b(\sqrt{\beta})=y\sqrt{a}/\sqrt{\beta}.\end{equation}
	
	\begin{prop}\label{uu3}
		Let $a,b\in F^\times$.
  
		(a) Every Galois $U_3$-algebra $K$ over $F$ such that $K^{Q_3}=F_{a,b}$ is of the form $(F_{a,b})_\alpha$ for some $\alpha\in F_a^\times$ with the property
		$N_a(\alpha)=b$ in $F^\times/F^{\times 2}$ and $U_3$-algebra structure as in (\ref{u3-algebra}).
  
        (b) Every Galois $U_3$-algebra $K$ over $F$ such that $K^{Q_3}=F_{a,b}$ is of the form $(F_{a,b})_\beta$ for some $\beta\in F_b^\times$ with the property
		$N_b(\beta)=a$ in $F^\times/F^{\times 2}$ and $U_3$-algebra structure as in (\ref{u3-algebra-2}).
		
		(c) Let $\alpha\in F_a^\times$, $\beta \in F_b^\times$ be such that $N_a(\alpha)=b$ in $F^\times/F^{\times 2}$ and $N_b(\beta)=a$ in $F^\times/F^{\times 2}$. The two Galois $U_3$-algebras $(F_{a,b})_\alpha$ and $(F_{a,b})_\beta$ (with $U_3$-algebra structure as in (\ref{u3-algebra}) and (\ref{u3-algebra-2}), respectively) are isomorphic if and only there exists $\omega\in F_{a,b}^\times$ such that
    \[\alpha\beta=\omega^2,\qquad (\sigma_a-1)(\sigma_b-1)\omega=-1.\]
	\end{prop}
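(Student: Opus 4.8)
The plan is to classify the Galois $U_3$-algebras inducing $F_{a,b}$ by descending along the central subgroup $Z_3=Q_3\simeq\mu_2$, and then to compare two such descriptions through an explicit, essentially $F_{a,b}$-linear, isomorphism.

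For (a), I would start from a Galois $U_3$-algebra $L$ with $L^{Z_3}=F_{a,b}$. Since $Z_3$ is central of order $2$ and acts trivially on $F_{a,b}$, Kummer theory for $Z_3\simeq\mu_2$ gives $L=(F_{a,b})_\pi$ for some $\pi\in F_{a,b}^\times$, with the generator $z$ of $Z_3$ acting by $\sqrt{\pi}\mapsto-\sqrt{\pi}$. To move $\pi$ into $F_a$, I record that $\sigma_b(\sqrt{\pi})^2=\sigma_b(\pi)\in F_{a,b}$; writing $\sigma_b(\sqrt{\pi})=c+d\sqrt{\pi}$ and squaring forces $c=0$ (the alternative $d=0$ would put $\sqrt{\pi}\in F_{a,b}$), so $\sigma_b(\sqrt{\pi})=d\sqrt{\pi}$ with $d\in F_{a,b}^\times$. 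The relation $\sigma_b^2=1$ then yields $d\,\sigma_b(d)=1$, so Hilbert's Theorem~90 for the order-$2$ automorphism $\sigma_b$ produces $e$ with $d=e/\sigma_b(e)$; replacing $\sqrt{\pi}$ by $e\sqrt{\pi}$ (i.e. $\pi$ by $\pi e^2$, which does not change $L$) I may assume $\sigma_b(\sqrt{\pi})=\sqrt{\pi}$, so $\pi=\alpha\in F_{a,b}^{\sigma_b}=F_a^\times$. Writing next $\sigma_a(\sqrt{\alpha})=w\sqrt{\alpha}$ (same squaring argument), the identity $\sigma_a(\alpha)=N_a(\alpha)/\alpha$ gives $N_a(\alpha)=(\alpha w)^2$, so $N_a(\alpha)$ is a square in $F_{a,b}$; being an element of $F^\times$, it lies in $\langle a,b\rangle F^{\times2}$. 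Imposing $\sigma_a^2=1$ rules out $N_a(\alpha)\equiv a,ab$, and the requirement that $[\sigma_a,\sigma_b]=z$ act nontrivially rules out $N_a(\alpha)\equiv1$; hence $N_a(\alpha)\equiv b$ and the action is exactly (\ref{u3-algebra}). Such an $\alpha$ exists precisely when $b\in N_a$, i.e. $(a,b)=0$, by \Cref{cup-norm}. Part (b) follows by interchanging the roles of $a$ and $b$ and normalizing via $\sigma_a$ instead of $\sigma_b$.

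For (c), the natural candidate is the $F_{a,b}$-linear map $\Phi\colon(F_{a,b})_\alpha\to(F_{a,b})_\beta$ determined by $\Phi(\sqrt{\alpha})=\omega/\sqrt{\beta}$. Since $\Phi$ is an $F_{a,b}$-algebra homomorphism, $\Phi(\sqrt{\alpha})^2=\Phi(\alpha)=\alpha$ must hold, which is exactly $\alpha\beta=\omega^2$; conversely any such $\omega$ makes $\Phi$ a well-defined isomorphism of $F_{a,b}$-algebras. In the other direction, a $U_3$-algebra isomorphism restricts to a $(\Z/2\Z)^2$-equivariant automorphism of $F_{a,b}$, and after absorbing the sign ambiguities of $x$ and $y$ by means of \Cref{minus-x} I may reduce to the case that this restriction is the identity, so that the isomorphism is of the form above. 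It then remains to decide when $\Phi$ is $U_3$-equivariant: evaluating $\Phi\circ\sigma_a=\sigma_a\circ\Phi$ and $\Phi\circ\sigma_b=\sigma_b\circ\Phi$ on $\sqrt{\alpha}$ through (\ref{u3-algebra}) and (\ref{u3-algebra-2}), and using $\sigma_b(\alpha)=\alpha$, $\sigma_a(\beta)=\beta$, I expect the two resulting identities to combine into the single relation $(\sigma_a-1)(\sigma_b-1)\omega=-1$, where $\sigma-1$ denotes the operator $\xi\mapsto\sigma(\xi)/\xi$ on $F_{a,b}^\times$.

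I expect the main obstacles to be the two normalization steps. In (a), one must verify that the sign in $d\,\sigma_b(d)=\pm1$ is $+1$ (so that Hilbert~90 applies) and carefully separate the four possible values of $N_a(\alpha)\bmod F^{\times2}$ using the relations of $U_3$. In (c), the delicate point is to organize the equivariance computation so that compatibility with $\sigma_a$ and with $\sigma_b$ collapses to the single commutator relation, tracking correctly how $\sigma_a$ and $\sigma_b$ act on the nested square roots $\sqrt{\alpha},\sqrt{\beta}$ together with the auxiliary scalars $x,y$; here \Cref{minus-x} is precisely what lets the two individual generator-conditions be reduced to their joint, sign-free form.
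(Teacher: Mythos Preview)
Your argument for (c) is essentially the paper's own: the paper also takes $\omega=\sqrt{\alpha}\cdot\sqrt{\beta'}$ (where $\sqrt{\beta'}$ is the image of $\sqrt{\beta}$ under the given isomorphism), observes that the center fixes $\omega$ so that $\omega\in F_{a,b}^\times$, and checks $(\sigma_a-1)(\sigma_b-1)\omega=-1$ directly; for the converse it uses exactly your map $\sqrt{\alpha}\mapsto\omega/\sqrt{\beta}$, after adjusting the sign of $x$ via \Cref{minus-x}. One small point: rather than first reducing the restriction to $F_{a,b}$ to the identity (which requires tracking the full $(\Z/2\Z)^2$-worth of equivariant automorphisms of $F_{a,b}$), the paper sidesteps this by noting that $\omega$ is central-invariant regardless; you may find that cleaner.

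For (a) and (b), however, your route is genuinely different from the paper's. The paper writes $U_3=N\rtimes S$ with $S\simeq\Z/2\Z$ given by $u_{12}$ and $N$ the normal subgroup $u_{12}=0$, twists $N$ by the cocycle of $a$ to obtain $N_\pi=\on{Ind}_{F_a}^F(\Z/2\Z)$, and then reads off the parametrization from the surjection $H^1(F,N_\pi)=F_a^\times/F_a^{\times2}\twoheadrightarrow\varphi^{-1}([\pi])$ via Faddeev--Shapiro. Your approach instead descends along the central $Z_3$ by Kummer theory and then normalizes via Hilbert~90 and an explicit case analysis on $N_a(\alpha)\bmod F^{\times2}$. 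Both work: the paper's method is shorter and extends effortlessly to $U_4$ (this is exactly how \Cref{uu4} is proved), while your method is more elementary and makes the $U_3$-relations visible, at the cost of the case-by-case verification you flag (including some care when $F_{a,b}$ is not a field). Your sketch is correct; the obstacles you anticipate are real but routine.
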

 
	\begin{proof}       
    	(a) We have $U_3=N\rtimes S$, where
     \[
        N=\begin{bmatrix}
			1 & 0 & *  \\
			& 1 & *  \\
			&    & 1
		\end{bmatrix},\qquad 
		 S=\begin{bmatrix}
			1 & * & 0  \\
			& 1 & 0  \\
			&    & 1
		\end{bmatrix}.
  \]
     We let $S$ act on $N$ by conjugation. Then $N$ has a permutation basis given by
        \[\begin{bmatrix}
			1 & 0 & 0  \\
			& 1 & 1  \\
			&    & 1
		\end{bmatrix},\qquad \begin{bmatrix}
			1 & 0 & 1  \\
			& 1 & 1  \\
			&    & 1
		\end{bmatrix}.\]
        Therefore we have a commutative square of $S$-modules:
        \begin{equation}\label{ind-group-s}
    \begin{tikzcd}
    N \arrow[r,"\sim"] \arrow[d,"u_{23}"] & \on{Ind}_{\set{1}}^{S}(\Z/2\Z) \arrow[d,"\on{Norm}"] \\
    \Z/2\Z \arrow[r,equal] & \Z/2\Z.
    \end{tikzcd}
  \end{equation}

        Let $p\colon U_3\to S$ be the projection map, and let $N_p$ be the twist of $N$ by $p$; see \cite[28.C]{knus1998book}. Then (\ref{ind-group-s}) induces a commutative square of $U_3$-modules:
    \begin{equation}\label{ind-group}
    \begin{tikzcd}
    N_p \arrow[r,"\sim"] \arrow[d,"(u_{23})_p"] & \on{Ind}_{N}^{U_3}(\Z/2\Z) \arrow[d,"\on{Norm}"] \\
    \Z/2\Z \arrow[r,equal] & \Z/2\Z.
    \end{tikzcd}
  \end{equation}

Let $\rho\colon\Gamma_F\to U_3$ be a continuous homomorphism whose class in $H^1(F,U_3)$ coincides with that of $K$, and define $\pi\colon \Gamma_F\to S$ by $\pi\coloneqq p\circ \rho$. Since $K^N=F_a$, the class of $\pi$ in $H^1(F,S)=H^1(F,\Z/2\Z)$ is equal to $\chi_a$.  Pullback of (\ref{ind-group}) along $\rho$ yields the following commutative square:
  \begin{equation}\label{ind-profinite}
    \begin{tikzcd}
    N_{\pi} \arrow[r,"\sim"] \arrow[d,"(u_{23})_{\pi}"] & \on{Ind}_{F_a}^F(\Z/2\Z) \arrow[d,"N_a"] \\
    \Z/2\Z \arrow[r,equal] & \Z/2\Z. 
    \end{tikzcd}
  \end{equation}
  Here $\on{Ind}_{F_a}^F(\Z/2\Z)$ indicates the $\Gamma_F$-module corresponding to the pushforward of the constant \'etale sheaf $\Z/2\Z$ on $\Spec(F_a)$ to $\Spec(F)$; see \cite[Theorem II.1.9]{milne1980etale}. Combining (\ref{ind-profinite}) with Faddeev--Shapiro's lemma, we deduce that the composition
        \[
            \Phi\colon H^1(F,N_{\pi})\xrightarrow{\on{Res}} H^1(F_a,N)\xrightarrow{u_{13}} H^1(F_a,\Z/2\Z)
        \]
    is an isomorphism fitting in the commutative diagram
\begin{equation}\label{shapiro2}
\begin{tikzcd}
    H^1(F,N_{\pi}) \arrow[r,"\Phi"] \arrow[d,"(u_{23})_{\pi}"] & H^1(F_a,\Z/2\Z) \arrow[r,equal]\arrow[d,"N_a"]  & F_a^\times/F_a^{\times 2} \arrow[d,"N_a"] \\
    H^1(F,\Z/2\Z) \arrow[r,equal] & H^1(F,\Z/2\Z) \arrow[r,equal] & F^{\times}/F^{\times 2}.  
\end{tikzcd}
\end{equation}
    Let $E$ be a Galois $U_3$-algebra and $\varphi\colon E^N\to F_a$ be an isomorphism of Galois $U_3$-algebras over $F$. By base change, we obtain an isomorphism of Galois $U_3$-algebras $\varphi_{F_a}\colon (E^N)_a\to (F_a)_a=\prod_{\sigma\in S}F_a$ over $F_a$. Therefore, we may write $E_a=\prod_{\sigma\in S}E_{\varphi,\sigma}$, where $E_{\varphi,\sigma}$ is the subalgebra of $E_a$ lying over the inverse image of the factor in $\prod_{\sigma\in S}F_a$ corresponding to $\sigma$. In terms of the identification (\ref{twist-galois-algebras}), the Faddeev--Shapiro isomorphism $\Phi$ sends the class of the pair $(E,\varphi)$ to the class of the $\Z/2\Z$-Galois algebra $E_{\varphi,0}/F_a$, where $0\in S$ is the identity element.

    Since $K^{Q_3}=F_{a,b}$, the pair $(K,\on{id})$ defines an element in $H^1(F,N_{\pi})$. Let $\alpha\in F_a^\times$ be such that $\Phi$ sends the class of $(K,\on{id})$ to $(F_a)_{\alpha}/F_a$. By (\ref{shapiro2}), we have $N_a(\alpha)=bx^2$ for some $x\in F^\times$. The pair $((F_{a,b})_{\alpha},\on{id})$, where $(F_{a,b})_{\alpha}$ is the Galois $U_3$-algebra of (\ref{u3-algebra}), also defines an element of $H^1(F,N_{\pi})$ which is mapped to $(F_a)_{\alpha}/F_a$ by $\Phi$. The injectivity of $\Phi$ now implies that the Galois $U_3$-algebras $K$ and $(F_{a,b})_{\alpha}$ are isomorphic over $F$, as desired.

        (b) Analogous to (a), replacing $N$ and $S$ by
        \[
		N'=\begin{bmatrix}
			1 & * & *  \\
			& 1 & 0  \\
			&    & 1
		\end{bmatrix}\quad \text{and}
		\quad S'=\begin{bmatrix}
			1 & 0 & 0  \\
			& 1 & *  \\
			&    & 1
		\end{bmatrix},
		\]
        respectively.
  
		(c) Suppose given an isomorphism between two $U_3$-algebras $(F_{a,b})_\alpha$ and $(F_{a,b})_\beta$. 
		Write $\beta'\in (F_{a,b})_\alpha$ for the image of $\beta$ under the isomorphism and set 
		$\omega\coloneqq\sqrt{\alpha}\cdot\sqrt{\beta'}$. We have $[\sigma_a,\sigma_b]\omega=\omega$, that is, $\omega$ is invariant under $Z_3=Q_3$, and hence
		$\omega\in F_{a,b}^\times$. Moreover,
		\[
		(\sigma_a-1)(\sigma_b-1)\omega=(\sigma_a-1)(y\sqrt{a}/\sqrt{\beta'})=-1.
		\]
        We have $\omega^2=\alpha\beta'$. Since $F_b$ coincides with the $N'$-invariant subalgebra of $(F_{a,b})_\beta$ and $(F_{a,b})_\alpha$, the isomorphism $(F_{a,b})_{\alpha}\to (F_{a,b})_\beta$ restricts to an automorphism of $F_b$. If this automorphism is the identity, then $\beta'=\beta$ and hence $\omega^2=\alpha\beta$. If this automorphism is non-trivial, then it must be equal to $\sigma_b$, and hence $\beta' = \sigma_b(\beta)$. Define $\omega'\coloneqq (\omega\beta)/(y\sqrt{a})\in F_{a,b}^\times$. Then \[(\omega')^2 =\frac{\omega^2\beta^2}{y^2a}=\frac{\alpha\sigma_b(\beta)\beta^2}{y^2a}=\alpha\beta.\]
        Since $\beta/(y\sqrt{a})$ belongs to $F_{a,b}^\times$, we have $(\sigma_a-1)(\sigma_b-1)(\beta/y\sqrt{a})=1$ and hence \[(\sigma_a-1)(\sigma_b-1)\omega'=(\sigma_a-1)(\sigma_b-1)\omega=-1.\]
        It thus suffices to replace $\omega$ by $\omega'$.

		Conversely, suppose $\alpha\beta=\omega^2$ for some $\omega\in F_{a,b}^\times$ such that $(\sigma_a-1)(\sigma_b-1)\omega=-1$.
		We have 
		\[
		(\sigma_a-1)\omega^2=(\sigma_a-1)\alpha=x^2 b/\alpha^2=(x\sqrt{b}/\alpha)^2.
		\]
  Let \[\zeta_a\coloneqq (\sigma_a-1)\omega \cdot (\alpha/(x\sqrt{b}))\in F_{a,b}^\times.\] 
  Then $(\sigma_b-1)\eta_a=(-1)\cdot(-1)=1$, and hence $\eta_a\in F_a^\times$. Moreover, we have $\eta_a^2=1$ and $N_a(\eta_a)=N_a(\alpha)/(x^2b)=1$. It follows that $\eta_a=\pm 1$, which is equivalent to
  \[(\sigma_a-1)\omega=\pm x\sqrt{b}/\alpha.\]
		Replacing $x$ by $-x$ if necessary (by \Cref{minus-x}, this does not change the $U_3$-algebra up to isomorphism), 
		we may assume that $(\sigma_a-1)\omega=x\sqrt{b}/\alpha$. Similarly, we have $(\sigma_b-1)\omega=y\sqrt{a}/\beta$.
		A calculation shows that the assignment $\sqrt{\alpha}\mapsto \omega/\sqrt{\beta}$ yields an isomorphism of
		$U_3$-algebras $(F_{a,b})_\alpha$ and $(F_{a,b})_\beta$.
	\end{proof}
	
	\subsection{Galois \texorpdfstring{$U_4$}{U4}-algebras}
	
	Let $a,b,c\in F^\times$, and suppose that $(a,b)=(b,c)=0$ in $\on{Br}(F)$. We write $(\Z/2\Z)^3=\ang{\sigma_a,\sigma_b,\sigma_c}$ and view $F_{a,b,c}$ as a Galois $(\Z/2\Z)^3$-algebra over $F$ as in \Cref{kummer-sub}. Let $\epsilon\in F_{a,c}^\times$ satisfy $N_{a,c}(\epsilon)=bx^2$ for some $x\in F^\times$.
	Set $\alpha=N_c(\epsilon)\in F_a^\times$ and $\gamma=N_a(\epsilon)\in F_c^\times$.

    We have 
     \begin{align*}U_4=\langle\sigma_a,\sigma_b,\sigma_c\colon \quad &\sigma_a^2=\sigma_b^2=\sigma_c^2=1,\\
    &[\sigma_a,\sigma_b]^2=[\sigma_b,\sigma_c]^2=[\sigma_a,\sigma_c]=1,\\
    &[[\sigma_a,\sigma_b],\sigma_c]=[\sigma_a,[\sigma_b,\sigma_c]],\\
    &[[\sigma_a,\sigma_b],\sigma_c]^2=1\rangle.
    \end{align*}
	The quotient map $U_4\to (\Z/2\Z)^3$ is given by $\sigma_a\mapsto \sigma_a$, $\sigma_b\mapsto \sigma_b$ and $\sigma_c\mapsto \sigma_c$. 
 The \'etale $F$-algebra $(F_{a,b,c})_{\alpha,\gamma,\epsilon}$ may be given the structure of a Galois $U_4$-algebra as follows: we let $\sigma_a,\sigma_b,\sigma_c$ act on $F_{a,b,c}$ via the quotient $(\Z/2\Z)^3$, and set
	\[
	\sigma_a(\sqrt{\epsilon})=\sqrt{\gamma}/\sqrt{\epsilon}, \quad \quad\sigma_c(\sqrt{\epsilon})=\sqrt{\alpha}/\sqrt{\epsilon}, 
	\]
	\[
	\sigma_a(\sqrt{\alpha})=x\sqrt{b}/\sqrt{\alpha}, \quad \quad \sigma_c(\sqrt{\gamma})=x\sqrt{b}/\sqrt{\gamma},
	\]
	\[
	(\sigma_b-1)(\sqrt{\alpha})=(\sigma_b-1)(\sqrt{\gamma})=(\sigma_b-1)(\sqrt{\epsilon})=(\sigma_a-1)(\sqrt{\gamma})=(\sigma_c-1)(\sqrt{\alpha})=1.
	\]
	We leave it to the reader to verify that this defines a Galois $U_4$-algebra structure on $(F_{a,b,c})_{\alpha,\gamma,\epsilon}$, that is, that $\sigma_a,\sigma_b,\sigma_c$ act via $F$-algebra automorphisms, that these automorphisms satisfy the above relations, and that the subalgebra of $Q_4$-invariants is $F_{a,b,c}$.

	\begin{prop}\label{uu4}
		Let $a,b,c\in F^\times$. 
		Then every Galois $U_4$-algebra $K$ over $F$ such that $K^{Q_4}=F_{a,b,c}$ is of the form $(F_{a,b,c})_{N_c(\epsilon),N_a(\epsilon),\epsilon}$ for some $\epsilon\in F_{a,c}^\times$ with the property
		$N_{a,c}(\epsilon)=b$ in $F^\times/F^{\times 2}$.
	\end{prop}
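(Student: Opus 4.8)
The plan is to adapt the proof of \Cref{uu3}(a) one level higher. First I would write $U_4$ as a semidirect product $U_4=M\rtimes T$, where $T\subset U_4$ is the subgroup whose only possibly nonzero off-diagonal entries sit in positions $(1,2)$ and $(3,4)$, and $M\coloneqq\on{Ker}((u_{1,2},u_{3,4})\colon U_4\to(\Z/2\Z)^2)$ is the subgroup with $u_{1,2}=u_{3,4}=0$. Since $(u_{1,2},u_{3,4})$ is a group homomorphism, $M$ is normal, and a dimension count gives $M\cap T=1$ and $MT=U_4$; here $T\simeq(\Z/2\Z)^2$ and $M\simeq(\Z/2\Z)^4$ is abelian, with basis the elementary unipotent matrices $e_{2,3},e_{1,3},e_{2,4},e_{1,4}$ having a single nonzero entry in the indicated position. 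The composite $U_4\to T\simeq(\Z/2\Z)^2$ is $(u_{1,2},u_{3,4})$, so the class of $F_{a,c}$ in $H^1(F,T)$ records $a$ and $c$.

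The key structural computation is that $M$ is free of rank one as an $\F_2[T]$-module, generated by $e_{2,3}$. Writing $T=\ang{s,t}$ with $s=e_{1,2}$ and $t=e_{3,4}$, an explicit conjugation calculation gives $(s-1)e_{2,3}=e_{1,3}$, $(t-1)e_{2,3}=e_{2,4}$ and $(s-1)(t-1)e_{2,3}=e_{1,4}$, so that $e_{2,3},(s-1)e_{2,3},(t-1)e_{2,3},(s-1)(t-1)e_{2,3}$ is a basis of $M$. Hence $M\simeq\on{Ind}_1^T(\Z/2\Z)$ as a $T$-module, and twisting by the $T$-torsor $\pi$ attached to $(a,c)$ yields $M_\pi\simeq R_{a,c}(\Z/2\Z)$. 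By Faddeev--Shapiro's lemma $H^1(F,M_\pi)=H^1(F_{a,c},\Z/2\Z)=F_{a,c}^\times/F_{a,c}^{\times 2}$, and I would then argue as in \Cref{uu3}(a) that the natural map $F_{a,c}^\times/F_{a,c}^{\times 2}\to H^1(F,U_4)$ surjects onto the fiber of $H^1(F,U_4)\to H^1(F,T)$ over $[\pi]$. Moreover the functional $u_{2,3}\colon M\to\Z/2\Z$ is $T$-invariant and, under $M\simeq\F_2[T]$, is the augmentation; thus after twisting it becomes the corestriction, so the $b$-component of the induced $(\Z/2\Z)^3$-algebra equals $N_{a,c}(\epsilon)$. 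Imposing $K^{Q_4}=F_{a,b,c}$ therefore cuts out exactly those $\epsilon\in F_{a,c}^\times$ with $N_{a,c}(\epsilon)=b$ in $F^\times/F^{\times 2}$.

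It remains to match the class attached to $\epsilon$ with the explicit algebra $(F_{a,b,c})_{N_c(\epsilon),N_a(\epsilon),\epsilon}$ constructed before the statement. For this I would unwind the twisting cocycle through the two intermediate quotients of $M$ cut out by the functionals $u_{1,3}$ and $u_{2,4}$, which are invariant under $\ang{t}$ and $\ang{s}$ respectively and hence twist to the partial corestrictions $N_c\colon F_{a,c}^\times\to F_a^\times$ and $N_a\colon F_{a,c}^\times\to F_c^\times$; these produce precisely $\alpha=N_c(\epsilon)$ and $\gamma=N_a(\epsilon)$ together with the displayed action of $\sigma_a,\sigma_c$ on $\sqrt{\epsilon},\sqrt{\alpha},\sqrt{\gamma}$. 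This identifies the parametrizing surjection as $\epsilon\mapsto(F_{a,b,c})_{N_c(\epsilon),N_a(\epsilon),\epsilon}$ and yields the proposition. The step I expect to be the main obstacle is precisely this last matching: the Shapiro parametrization determines $K$ only up to a cocycle normalization, and checking that it reproduces the exact formulas rather than a twisted variant requires careful tracking of the $T$-action on $M$; in particular the sign ambiguity absorbed by \Cref{minus-x} at the $U_3$ level reappears and must be handled in the same way.
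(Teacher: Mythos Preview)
Your approach is correct and essentially identical to the paper's: the decomposition $U_4=M\rtimes T$ you use is exactly the paper's $U_4=N\rtimes S$ (with $N$ defined by $u_{12}=u_{34}=0$ and $S$ by $u_{13}=u_{14}=u_{23}=u_{24}=0$), and both proofs proceed by identifying $N_\pi$ with $\on{Ind}_{F_{a,c}}^F(\Z/2\Z)$, applying Faddeev--Shapiro, and reading off the partial norms $N_c,N_a$ from the two $U_3$ corners. Your write-up is in fact more explicit than the paper's on why $N$ is a free rank-one $\F_2[T]$-module and on the final matching step, including the sign issue handled by \Cref{minus-x}, which the paper leaves implicit.
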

 
	\begin{proof}
  The proof is similar to that of \Cref{uu3}(a). We have $U_4=N\rtimes S$, where
  \[N=\begin{bmatrix} 
  1 & 0 & * & * \\
  & 1 & * & * \\
  && 1 & 0\\
  &&& 1
  \end{bmatrix},\qquad 
  S=\begin{bmatrix} 
  1 & * & 0 & 0 \\
  & 1 & 0 & 0 \\
  && 1 & *\\
  &&& 1
  \end{bmatrix}.
  \]  
  The coordinate functions $u_{12}$ and $u_{34}$ determine a group isomorphism $S\simeq (\Z/2\Z)^2$.

    Let $\rho\colon \Gamma_F\to U_4$ be a homomorphism whose class in $H^1(F,U_4)$ is equal to the class of $K$, and let $\pi\coloneqq p\circ \rho$, so that the class of $\pi$ in $H^1(F,S)=H^1(F,(\Z/2\Z)^2)$ is equal to $(\chi_a,\chi_b)$. Write $N_{\pi}$ for the twist of $N$. The $S$-module $N$ has a permutation basis given by
  \[
   \begin{bmatrix} 
  1 & 0 & 0 & 0 \\
  & 1 & 1 & 0 \\
  && 1 & 0\\
  &&& 1
  \end{bmatrix},\quad 
     \begin{bmatrix} 
  1 & 0 & 1 & 0 \\
  & 1 & 1 & 0 \\
  && 1 & 0\\
  &&& 1
  \end{bmatrix},\quad 
     \begin{bmatrix} 
  1 & 0 & 0 & 0 \\
  & 1 & 1 & 1 \\
  && 1 & 0\\
  &&& 1
  \end{bmatrix},\quad 
     \begin{bmatrix} 
  1 & 0 & 1 & 1 \\
  & 1 & 1 & 1 \\
  && 1 & 0\\
  &&& 1
  \end{bmatrix}.
  \]
	This choice of basis gives a commutative square of continuous $\Gamma_F$-modules
 \begin{equation}\label{induced-u4}
    \begin{tikzcd}
    N_{\pi} \arrow[r,"\sim"] \arrow[d,"(u_{23})_{\pi}"] & \on{Ind}_{F_{a,c}}^F(\Z/2\Z) \arrow[d,"N_{a,c}"] \\
    \Z/2\Z \arrow[r,equal] & \Z/2\Z. 
    \end{tikzcd}
  \end{equation}
From Faddeev--Shapiro's lemma, we obtain an isomorphism \[\Phi\colon H^1(F,N_{\pi})\to H^1(F_{a,c},N)\xrightarrow{u_{14}}H^1(F_{a,c},\Z/2\Z)\] fitting into the commutative diagram
\begin{equation}\label{shapiro-u4}
\begin{tikzcd}
    H^1(F,N_{\pi}) \arrow[r,"\Phi"] \arrow[d,"(u_{23})_{\pi}"] & H^1(F_{a,c},\Z/2\Z) \arrow[r,equal]\arrow[d,"N_{a,c}"]  & F_{a,c}^\times/F_{a,c}^{\times 2} \arrow[d,"N_{a,c}"] \\
    H^1(F,\Z/2\Z) \arrow[r,equal] & H^1(F,\Z/2\Z) \arrow[r,equal] & F^{\times}/F^{\times 2}.  
\end{tikzcd}
\end{equation}

Let $E$ be a Galois $U_4$-algebra over $F$ and $\varphi\colon E^{Q_4}\to F_{a,b,c}$ be an isomorphism. Base change to $F_{a,c}$ yields an isomorphism $\varphi_{F_{a,c}}\colon (E^N)_{a,c}\to (F_{a,c})_{a,c}=\prod_{\sigma\in S}F_{a,c}$, and so we may write $E_{a,c}=\prod_{\sigma\in S}E_{\varphi,\sigma}$, where $E_{\varphi,\sigma}$ is the subalgebra of $E_{a,c}$ lying over the inverse image of the factor of $\prod_{\sigma\in S}F_{a,c}$ corresponding to $\sigma$. 
Under the identification of (\ref{twist-galois-algebras}), the map $\Phi$ sends the pair $(E,\varphi)$ to the class of the Galois $\Z/2\Z$-algebra $E_{\varphi, 0}/F_{a,c}$, where $0\in S$ is the identity element. 

Since $K^{Q_4}=F_{a,b,c}$, the pair $(K,\on{id})$ defines an element of $H^1(F,N_{\pi})$. Let $\epsilon\in F_{a,c}^\times$ be such that $\Phi$ sends the class of $(K,\on{id})$ to $(F_{a,c})_{\epsilon}$. By (\ref{shapiro-u4}), we have $N_{a,c}(\epsilon)=bx^2$ for some $x\in F^{\times}$. Let $\alpha\coloneqq N_c(\epsilon)\in F_a^\times$ and $\gamma\coloneqq N_a(\epsilon)\in F_c^\times$, and consider on $(F_{a,b,c})_{\alpha,\gamma,\epsilon}$ the Galois $U_4$-algebra structure over $F$ given by (\ref{uu4}). Then $\Phi$ sends the class of the pair $((F_{a,b,c})_{\alpha,\delta,\epsilon},\on{id})$  to the class of the $\Z/2\Z$-algebra $(F_{a,c})_{\epsilon}/F_{a,c}$. Since $\Phi$ is injective, we deduce an isomorphism $K\cong (F_{a,b,c})_{\alpha,\gamma,\epsilon}$ of Galois $U_4$-algebras over $F$.
	\end{proof}

    The following theorem was proved by Hopkins and Wickelgren \cite{hopkins2015splitting} when $F$ is a number field, and by Min\'{a}\v{c} and T\^{a}n \cite{minac2015triple} when $F$ is arbitrary. We give an alternative proof.

	\begin{cor}\label{massey-triple}
	Suppose that $p=2$, and let $a,b,c\in F^\times$ be such that $(a,b)=(b,c)=0$ in $\on{Br}(F)$. Then the Massey product $\ang{a,b,c}$ is defined and vanishes.
\end{cor}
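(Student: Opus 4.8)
The plan is to deduce \Cref{massey-triple} directly from \Cref{dwyer-cor} and \Cref{uu4}, by exhibiting a Galois $U_4$-algebra $L/F$ with $L^{Q_4}=F_{a,b,c}$ and then showing that this same algebra can be refined to witness vanishing. The starting point is the hypothesis $(a,b)=(b,c)=0$ in $\on{Br}(F)$, which by \Cref{cup-norm} tells us that $b\in N_a$ and $b\in N_c$. The first step is therefore to produce the scalar $\epsilon$ needed to apply \Cref{uu4}: I want $\epsilon\in F_{a,c}^\times$ with $N_{a,c}(\epsilon)=b$ in $F^\times/F^{\times 2}$. Since $F_{a,c}/F_a$ and $F_{a,c}/F_c$ are quadratic, the norm $N_{a,c}$ factors as $N_a\circ N_c=N_c\circ N_a$, so it suffices to lift $b$ first into $N_a\subset F_a^\times$ (possible because $(a,b)=0$) and then check the element can be realized as a norm from $F_{a,c}$.

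The key observation making the existence of $\epsilon$ automatic is that $(a,b)=(b,c)=0$ over $F$ implies the relevant norm conditions ascend to the biquadratic extension. Concretely, I would first choose $\alpha\in F_a^\times$ with $N_a(\alpha)=b$ (using $(a,b)=0$ and \Cref{cup-norm}(ii)), then observe that over $F_a$ the quaternion class $(c,b)$ still vanishes, so $b$ is also a norm from $(F_a)_c=F_{a,c}$ down to $F_a$; combining these lets me build $\epsilon\in F_{a,c}^\times$ with the prescribed total norm $N_{a,c}(\epsilon)=b$ modulo squares. Once such an $\epsilon$ exists, \Cref{uu4} guarantees that the étale algebra $(F_{a,b,c})_{N_c(\epsilon),N_a(\epsilon),\epsilon}$ carries a genuine Galois $U_4$-algebra structure lying over $F_{a,b,c}$, and by \Cref{dwyer-cor} (in the $n=3$ case, where $\cl{U}_4$ is relevant for ``defined'') this immediately yields that $\ang{a,b,c}$ is \emph{defined}.

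To upgrade from ``defined'' to ``vanishes'' I would invoke \Cref{dwyer-cor} again, now needing a Galois $U_4$-algebra rather than merely a $\cl{U}_4$-algebra over $F_{a,b,c}$; but in the $n=3$ situation $U_4$ and $\cl{U}_4$ differ by the central $Z_4\simeq\Z/2\Z$, and what \Cref{uu4} constructs is already a full $U_4$-algebra. Thus the same $\epsilon$ that proves definedness also proves vanishing, because the $U_4$-algebra $(F_{a,b,c})_{N_c(\epsilon),N_a(\epsilon),\epsilon}$ restricts to $F_{a,b,c}$ on $Q_4$-invariants, which is exactly the vanishing criterion of \Cref{dwyer-cor}. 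The point is that for triple Massey products the obstruction to lifting from $(\Z/2\Z)^3$ all the way to $U_4$ is captured entirely by the single norm condition $N_{a,c}(\epsilon)=b$, and that condition is satisfied as soon as the two cup products vanish.

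The main obstacle I anticipate is verifying that $b$ lies in the image of the full norm $N_{a,c}\colon F_{a,c}^\times\to F^\times$ modulo squares, as opposed to merely being a norm from each of the two intermediate quadratic subextensions separately. The two hypotheses $(a,b)=0$ and $(b,c)=0$ give norm representations at the two ``bottom'' levels, and one must argue these can be glued into a single element of the biquadratic field with the correct total norm. I expect this to follow from a compatibility computation using the factorization $N_{a,c}=N_a\circ N_c$ together with the fact that $(b,c)=0$ persists after base change to $F_a$ (since corestriction-restriction and the projection formula keep the class zero); once that gluing is in place, the rest is a direct application of the already-established \Cref{uu4} and \Cref{dwyer-cor}.
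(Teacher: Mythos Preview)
Your overall strategy---produce $\epsilon\in F_{a,c}^\times$ with $N_{a,c}(\epsilon)\equiv b\pmod{F^{\times 2}}$, then invoke \Cref{uu4} and \Cref{dwyer-cor}---is exactly the paper's, and you are right that once $\epsilon$ exists the $U_4$-algebra $(F_{a,b,c})_{N_c(\epsilon),N_a(\epsilon),\epsilon}$ simultaneously witnesses definedness and vanishing.

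The gap is in the construction of $\epsilon$. Your proposed gluing does not close. If you take $\eta\in F_{a,c}^\times$ with $N_c(\eta)=b$ in $F_a$ (using that $(b,c)=0$ persists over $F_a$), then $N_{a,c}(\eta)=N_a(b)=b^2$, a square---not $b$ modulo squares. The alternative you gesture at, finding $\epsilon$ with $N_c(\epsilon)=\alpha$ so that $N_{a,c}(\epsilon)=N_a(\alpha)=b$, requires $(\alpha,c)=0$ in $\on{Br}(F_a)$; but from $(b,c)=0$ in $\on{Br}(F)$ you only get $N_a(\alpha,c)=(N_a(\alpha),c)=(b,c)=0$, which is the corestriction, not the class itself. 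You can modify $\alpha$ by any $f\in F^\times$ without changing $N_a(\alpha)$ modulo squares, but then you need $(\alpha,c)$ to lie in the image of $f\mapsto (f,c)$ from $F^\times$ to $\on{Br}(F_a)$, and nothing you have written guarantees this.

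The paper handles this step by invoking \Cref{comes-from-ac-cor}, which asserts $N_a\cap N_c=N_{a,c}F^{\times 2}$; its proof (via \Cref{comes-from-ac}) is the explicit trick: given $\rho\in F_a^\times$, $\mu\in F_c^\times$ with $N_a(\rho)=N_c(\mu)=b$, one takes $\epsilon=\rho+\mu\in F_{a,c}^\times$ and computes directly that $N_{a,c}(\rho+\mu)=b\,d^2$ with $d=\on{Tr}_a(\rho)+\on{Tr}_c(\mu)$. That is the missing ingredient; once you cite it, your argument and the paper's coincide.
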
 

\begin{proof}
   Since $(a,b)=(b,c)=0$, by \Cref{cup-norm} we have $b\in N_a\cap N_c$.  By \Cref{comes-from-ac-cor} there exists $\epsilon \in F_{a,c}^\times$ such that $N_{a,c}(\epsilon)=b$ in $F^\times/F^{\times 2}$. By the discussion preceding \Cref{uu4}, $K\coloneqq (F_{a,b,c})_{N_c(\epsilon),N_a(\epsilon),\epsilon}$ is a Galois $U_4$-algebra over $F$ such that $K^{Q_4}=F_{a,b,c}$. The conclusion follows from \Cref{dwyer-cor}. 
		\end{proof}
 
	\subsection{Galois \texorpdfstring{$\cl{U}_5$}{U5}-algebras}\label{uu5-sec}
	Let $a,b,c,d\in F^\times$. We write $(\Z/2\Z)^4=\ang{\sigma_a,\sigma_b,\sigma_c,\sigma_d}$ and regard $F_{a,b,c,d}$ as a Galois $(\Z/2\Z)^4$-algebra over $F$ as in \Cref{kummer-sub}.
 
	\begin{prop}\label{uu5}
		Let $a,b,c,d\in F^\times$. Then the Massey product $\langle a,b,c,d\rangle$ is defined if and only if
		there are $\epsilon\in F_{a,c}^\times$, $\nu\in F_{b,d}^\times$ and $\omega\in F_{b,c}^\times$ such that 
		\begin{enumerate}
			\item $N_{a,c}(\epsilon)=b$ in $F^\times/F^{\times 2}$;
			\item $N_{b,d}(\nu)=c$ in $F^\times/F^{\times 2}$;
			\item $N_a(\epsilon) N_d(\nu)=\omega^2$;
			\item $(\sigma_b-1)(\sigma_c-1)\omega=-1$.
		\end{enumerate}
	\end{prop}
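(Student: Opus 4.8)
The plan is to combine Dwyer's theorem (\Cref{dwyer-cor}) with the glueing result (\Cref{glueing}) and the explicit descriptions of Galois $U_4$- and $U_3$-algebras from Propositions \ref{uu4} and \ref{uu3}. By \Cref{dwyer-cor}, the Massey product $\ang{a,b,c,d}$ is defined if and only if there is a Galois $\cl{U}_5$-algebra $K/F$ with $K^{\cl{Q}_5}=F_{a,b,c,d}$. The cartesian square (\ref{phi-phi'}) with $n=4$ exhibits $\cl{U}_5$ as the fibre product of two copies of $U_4$ over $U_3$, where $\varphi_5$ restricts a matrix to its top-left $4\times 4$ block (coordinates $\sigma_a,\sigma_b,\sigma_c$) and $\varphi'_5$ to its bottom-right $4\times 4$ block (coordinates $\sigma_b,\sigma_c,\sigma_d$). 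Thus I would first translate the existence of $K$ into the existence of a compatible pair of Galois $U_4$-algebras: an algebra $K_1$ with $K_1^{Q_4}=F_{a,b,c}$ and an algebra $K_2$ with $K_2^{Q_4}=F_{b,c,d}$ whose images under $\varphi'_4$ and $\varphi_4$ (the two further restrictions to the common $U_3$ on coordinates $\sigma_b,\sigma_c$) agree. For the forward direction this is immediate by taking $K_i$ to be the pushforwards of $K$; for the converse it is exactly the surjectivity in \Cref{glueing}, after observing that the resulting $\cl{U}_5$-algebra still induces $F_{a,b,c,d}$, which holds because the map $\cl{U}_5\to(\Z/2\Z)^4$ factors through the fibre product and the pair $(K_1,K_2)$ already records $(a,b,c)$ and $(b,c,d)$, which overlap consistently in $(b,c)$.

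Next I would make the two $U_4$-algebras explicit. By \Cref{uu4}, $K_1=(F_{a,b,c})_{N_c(\epsilon),N_a(\epsilon),\epsilon}$ for some $\epsilon\in F_{a,c}^\times$ with $N_{a,c}(\epsilon)=b$ in $F^\times/F^{\times 2}$, which is condition (1). Applying \Cref{uu4} to the ordered triple $(b,c,d)$ in place of $(a,b,c)$, I get $K_2=(F_{b,c,d})_{N_d(\nu),N_b(\nu),\nu}$ for some $\nu\in F_{b,d}^\times$ with $N_{b,d}(\nu)=c$ in $F^\times/F^{\times 2}$, which is condition (2).

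It then remains to rewrite the compatibility of $K_1$ and $K_2$ over the common $U_3$. Reading off the structure constants in the discussion preceding \Cref{uu4}, the bottom-right $U_3$-block of $K_1$ is the Galois $U_3$-algebra over $F_{b,c}$ attached to $\gamma\coloneqq N_a(\epsilon)\in F_c^\times$ (of type (\ref{u3-algebra-2}), with $\sigma_c$ moving $\sqrt{\gamma}$ and $\sigma_b$ fixing it), while the top-left $U_3$-block of $K_2$ is attached to $\alpha\coloneqq N_d(\nu)\in F_b^\times$ (of type (\ref{u3-algebra}), with $\sigma_b$ moving $\sqrt{\alpha}$ and $\sigma_c$ fixing it); both induce $F_{b,c}$ and carry the correct norms $N_c(\gamma)=b$ and $N_b(\alpha)=c$ modulo squares. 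Invoking the isomorphism criterion \Cref{uu3}(c), with the pair $(\sigma_b,\sigma_c)$ in the role of $(\sigma_a,\sigma_b)$, these two $U_3$-algebras are isomorphic precisely when there exists $\omega\in F_{b,c}^\times$ with $\alpha\gamma=\omega^2$ and $(\sigma_b-1)(\sigma_c-1)\omega=-1$. Since $\alpha\gamma=N_a(\epsilon)N_d(\nu)$, these are exactly conditions (3) and (4), completing the equivalence.

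I expect the main obstacle to be the bookkeeping in this last step: correctly identifying which of the two $U_3$-algebra structures of \Cref{uu3} (type (\ref{u3-algebra}) versus type (\ref{u3-algebra-2})) each restriction produces, and checking that the radical generators and norms line up so that \Cref{uu3}(c) applies verbatim with the substitution $(\sigma_a,\sigma_b)\rightsquigarrow(\sigma_b,\sigma_c)$. One must also note that $\epsilon$ and $\nu$ are determined by $K_1$ and $K_2$ only modulo squares, but this is harmless: replacing $\epsilon$ or $\nu$ by a square multiple changes $N_a(\epsilon)N_d(\nu)$ only by a square in $F_{b,c}^\times$, which can be absorbed into $\omega$ without affecting condition (4).
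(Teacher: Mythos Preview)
Your proposal is correct and follows essentially the same approach as the paper: reduce via \Cref{dwyer-cor} and \Cref{glueing} to a compatible pair of Galois $U_4$-algebras, parametrize them by $\epsilon$ and $\nu$ using \Cref{uu4}, and then translate the $U_3$-compatibility via \Cref{uu3}(c) into conditions (3) and (4). The paper's proof is slightly terser but the logical structure is identical.
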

	\begin{proof}
	Denote by $U_4^+$ and $U_4^-$ the top-left and bottom-right $4\times 4$ corners of $U_5$, respectively, and let $S\coloneqq U_4^+\cap U_4^-$ be the middle subgroup $U_3$. Let $Q_4^+$ and $Q_4^-$ be the kernel of the maps $U_4^+\to (\Z/2\Z)^3$ and $U_4^-\to (\Z/2\Z)^3$, respectively, and $P_4^+$ and $P_4^-$ be the kernel of the maps $U_4^+\to U_3$ and $U_4^-\to U_3$, respectively.  By \Cref{glueing}, there exists a Galois $\cl{U}_5$-algebra $K/F$ such that $K^{\cl{Q}_5}=F_{a,b,c,d}$ if and only if there are a Galois $U_4^+$-algebra $K_1/F$ and a Galois $U_4^-$-algebra $K_2/F$ such that 
 (i) $K_1^{Q^+_4}=F_{a,b,c}$, $K_2^{Q^-_4}=F_{b,c,d}$ and (ii) the $U_3$-algebras $K_1^{P_4^+}$ and $K_2^{P_4^-}$ are isomorphic. By Proposition \ref{uu4}, to give $K_1$ and $K_2$ satisfying (i) is equivalent to giving two elements
		$\epsilon\in F_{a,c}^\times$ and $\nu\in F_{b,d}^\times$ such that $N_{a,c}(\epsilon)=b$ and $N_{b,d}(\nu)=c$ in $F^\times/F^{\times 2}$. 
		By \Cref{uu3}(c), $K_1$ and $K_2$ satisfy (ii) if and only if there is $\omega\in F_{b,c}^\times$ such that
		$N_a(\epsilon) N_d(\nu)=\omega^2$ and $(\sigma_b-1)(\sigma_c-1)\omega=-1$.
	\end{proof}

\begin{rmk}
    Let $\epsilon\in F_{a,c}^\times$, $\nu\in F_{b,d}^\times$, $e\in F^\times$ and $\omega\in F_{b,c}^\times$ such that $N_a(\epsilon)N_d(\nu)=e\omega^2$. Then $(\sigma_b-1)N_a(\epsilon)=1$ and $(\sigma_c-1)N_d(\nu)=1$, hence $(\sigma_b-1)(\sigma_c-1)\omega^2=1$. Therefore $(\sigma_b-1)(\sigma_c-1)\omega\in\set{\pm 1}$.
 \end{rmk}

  	Suppose now that $a,b,c,d\in F^\times$ satisfy $(a,b)=(b,c)=(c,d)=0$. By \Cref{comes-from-ac-cor} there exist $\epsilon\in F_{a,c}^\times$, $\nu\in F_{b,d}^\times$ such that $N_{a,c}(\epsilon)=b$ in $F^\times/F^{\times 2}$ and $N_{b,d}(\nu)=c$ in $F^\times/F^{\times 2}$. Even if $\ang{a,b,c,d}$ is defined, it is not true that one may find $\omega\in F_{b,c}^\times$ such that $(\epsilon,\nu,\omega)$ satisfies the equations of \Cref{uu5}: one might need to change $\epsilon$ and $\nu$. It will be useful to have a criterion for $\ang{a,b,c,d}$ to be defined in terms of any given $\epsilon$ and $\nu$. This is the content of the next proposition.

		\begin{prop}\label{u5-e}
  Let $a,b,c,d\in F^\times$ be such that $(a,b)=(b,c)=(c,d)=0$.
        Let $\epsilon \in F_{a,c}^\times$ and $\nu\in F_{b,d}^\times$ be such that $N_{a,c}(\epsilon)=b$ in $F^\times/F^{\times 2}$ and $N_{b,d}(\nu)=c$ in $F^\times/F^{\times 2}$.
  
		(a) There exist $e\in F^\times$ and $\omega \in F_{b,c}^\times$ such that
		\[N_a(\epsilon) N_d(\nu)=e\omega^2,\qquad (\sigma_b-1)(\sigma_c-1)\omega=-1.\]
		
		(b) Letting $\epsilon$ and $\nu$ vary, the corresponding $e$ form a $N_aN_{ac}N_dN_{bd}$-coset of $F^\times$. 
		
		(c) The Massey product $\ang{a,b,c,d}$ is defined if and only if $e\in N_aN_{ac}N_dN_{bd}$. 
	\end{prop}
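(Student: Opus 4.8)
The plan is to treat the three parts in order, building on the structural description of Galois $U_4$- and $U_3$-algebras established above. For (a), I would feed the given $\epsilon$ and $\nu$ through \Cref{uu4} to obtain Galois $U_4$-algebras $K_1,K_2$ over $F$ with $K_1^{Q_4}=F_{a,b,c}$ and $K_2^{Q_4}=F_{b,c,d}$, and then restrict both to the middle copy of $U_3$ attached to $(\sigma_b,\sigma_c)$. As in the proof of \Cref{uu5}, this yields the two Galois $U_3$-algebras $(F_{b,c})_\gamma$ and $(F_{b,c})_\mu$, where $\gamma:=N_a(\epsilon)\in F_c^\times$ and $\mu:=N_d(\nu)\in F_b^\times$ satisfy $N_c(\gamma)=b$ and $N_b(\mu)=c$ in $F^\times/F^{\times2}$. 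Both induce the same $(\Z/2\Z)^2$-algebra $F_{b,c}$, so \Cref{differ-by-scalar} shows they differ by a scalar, say $(F_{b,c})_\gamma\cong (F_{b,c})_{t\mu}$ for some $t\in F^\times$. Inserting this isomorphism into the criterion of \Cref{uu3}(c) produces $\omega\in F_{b,c}^\times$ with $t\,\gamma\mu=\omega^2$ and $(\sigma_b-1)(\sigma_c-1)\omega=-1$; setting $e:=t^{-1}$ gives the two identities of (a). I would stress that the sign $-1$ is not an additional constraint to be arranged by hand: once $\gamma\mu=e\omega^2$ holds, the computation $((\sigma_c-1)\omega)^2=(\sigma_c-1)(\gamma\mu)=(\sqrt b\,s/\gamma)^2$ (writing $N_c(\gamma)=bs^2$, and using $\mu\in F_b^\times$, $\gamma\in F_c^\times$) forces $(\sigma_b-1)(\sigma_c-1)\omega=-1$ no matter which square root is chosen, in agreement with the Remark following \Cref{uu5}.

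For (b), I would first fix $(\epsilon,\nu)$ and note that $e$ is then determined only modulo $F^\times\cap F_{b,c}^{\times2}=\langle b,c\rangle F^{\times2}$, which lies in $N_aN_d$ because $b\in N_a$ and $c\in N_d$ by \Cref{cup-norm}. Since $e\equiv\gamma\mu=N_a(\epsilon)N_d(\nu)$, I would then exhibit explicit moves sweeping out the coset: replacing $\epsilon$ by $\epsilon\kappa$ with $\kappa\in F_a^\times$ multiplies $e$ by $N_a(\kappa)$ (running over $N_a$), while replacing $\epsilon$ by $\epsilon\,\zeta\sigma_c(\zeta)$ with $\zeta\in F_{a,c}^\times$ keeps $N_{a,c}(\epsilon)$ fixed modulo squares and multiplies $\gamma$ by $N_a(\zeta\sigma_c(\zeta))=N_c(N_a(\zeta))=N_{a,c}(\zeta)$ (running over $N_{ac}$); symmetric moves in $\nu$ realize $N_d$ and $N_{bd}$. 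The reverse containment is the crux. Given an admissible change $\eta\in F_{a,c}^\times$ with $N_{a,c}(\eta)\in F^{\times2}$, I must show the resulting factor $w:=e'/e\in F^\times$ lies in $N_aN_{ac}$. Putting $\theta:=N_a(\eta)\in F_c^\times$, one has $\theta\equiv w\pmod{F_{b,c}^{\times2}}$ with $w\in F^\times$, hence $\theta\equiv w\pmod{\langle b\rangle F_c^{\times2}}$; since $\theta$ is a norm from $F_{a,c}/F_c$ and $(a,b)=0$, I can conclude $(a,w)_{F_c}=(a,\theta)_{F_c}=0$, i.e. the quaternion class $(a,w)_F$ is split by $F_c$.

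The main obstacle, and the point where I expect to invoke the appendix, is the biquadratic norm identity $\{x\in F^\times:(a,x)_F\text{ is split by }F_c\}=N_aN_{ac}$, together with its $(d,F_b)$-analogue $\{x\in F^\times:(d,x)_F\text{ is split by }F_b\}=N_dN_{bd}$ for the $\nu$-side (where the reduction uses $(c,d)=0$ in place of $(a,b)=0$). Granting these, the splitting condition above yields $w\in N_aN_{ac}$, and combining this with the explicit moves shows that the attainable $e$ form exactly one coset of $G:=N_aN_{ac}N_dN_{bd}$, which is (b). I would verify the one containment of the identity directly—$N_a,N_{ac}\subseteq\{x:(a,x)_F\text{ split by }F_c\}$ follows from $N_{a,c}=N_{F_c/F}\circ N_{F_{a,c}/F_c}$—and treat the equality as the substantive known input on biquadratic extensions.

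Finally, for (c) I would combine (b) with \Cref{uu5}. By \Cref{uu5}, $\ang{a,b,c,d}$ is defined precisely when some admissible $(\epsilon,\nu)$ admits an $\omega$ with $N_a(\epsilon)N_d(\nu)=\omega^2$ a genuine square and $(\sigma_b-1)(\sigma_c-1)\omega=-1$. By part (a) the sign is automatic and such a factorization exists exactly when the associated $e$ can be taken in $F^{\times2}$ (absorbing $\sqrt e\in F$ into $\omega$ preserves the sign). By (b) the attainable $e$ form the coset $eG$, so this occurs iff $eG\cap F^{\times2}\neq\emptyset$, iff $e\in G$ (using $F^{\times2}\subseteq N_a\subseteq G$). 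This is the assertion of (c), completing the proof.
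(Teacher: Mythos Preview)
Your arguments for (a) and (c) match the paper's. For (b) you take a different route from the paper, and your sketch has two gaps.

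First, the move $\epsilon\mapsto\epsilon\,\zeta\sigma_c(\zeta)$ multiplies $\gamma=N_a(\epsilon)$ by the \emph{full} norm $N_{a,c}(\zeta)$; this already lies in $N_a$ and does not sweep out $N_{ac}$. The correct move is $\epsilon\mapsto\epsilon\xi$ for $\xi\in F_{ac}^\times$: then $N_{a,c}(\xi)=N_{ac}(\xi)^2$ preserves admissibility, while $N_a(\xi)=\xi\cdot\sigma_a(\xi)=N_{ac}(\xi)$ runs over $N_{ac}$. Second, the implication ``$\theta/w\in F_c^\times\cap F_{b,c}^{\times 2}$, hence $\theta/w\in\langle b\rangle F_c^{\times 2}$'' fails when $F_c$ is not a field (the kernel of $F_c^\times/F_c^{\times2}\to F_{b,c}^\times/F_{b,c}^{\times2}$ is then larger than $\langle b\rangle$). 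What rescues you is precisely the sign condition you set aside: since both $\omega,\omega'$ have sign $-1$, the ratio $\rho:=\omega'/\omega$ satisfies $(\sigma_b-1)(\sigma_c-1)\rho=1$, so \Cref{product-a-b} gives $\rho=\rho_b\rho_c$ with $\rho_b\in F_b^\times$, $\rho_c\in F_c^\times$. Then $\theta/w=\rho_b^2\rho_c^2\in F_c^\times$ forces $\rho_b^2\in F_b\cap F_c=F$, whence $\rho_b^2\in\langle b\rangle F^{\times 2}$ and your claim follows.

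The paper sidesteps both issues by invoking \Cref{biquadratic-triple-norm}(1) directly: any $\eta\in F_{a,c}^\times$ with $N_{a,c}(\eta)\in F^{\times 2}$ factors as $\eta_a\eta_c\eta_{ac}$ with $\eta_a\in F_a^\times$, $\eta_c\in F_c^\times$, $\eta_{ac}\in F_{ac}^\times$, so that $N_a(\eta)=N_a(\eta_a)N_{ac}(\eta_{ac})\eta_c^2$. Absorbing $\eta_c$ (and the analogous $\nu_b$) into $\omega$ leaves $N_a(\eta_a)N_{ac}(\eta_{ac})N_d(\nu_d)N_{bd}(\nu_{bd})=e\omega_1^2$ with $\omega_1^2\in F^\times$; hence $\omega_1=f\sqrt{b}^i\sqrt{c}^j$, and one reads off $e\in N_aN_{ac}N_dN_{bd}$ from $b\in N_dN_{bd}$ and $c\in N_aN_{ac}$. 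This decomposition lemma is the clean substitute for your Brauer-group detour via the chain lemma; both ultimately rest on Hilbert~90, but the paper's route avoids the \'etale-algebra subtleties above.
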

	
	\begin{proof}
	(a) By \Cref{differ-by-scalar}, there exists $e\in F^\times$ such that $(F_{b,c})_{N_a(\epsilon)}\simeq (F_{b,c})_{eN_d(\nu)}$. Thus \Cref{uu3}(c) implies the existence of $\omega\in F_{b,c}^\times$ such that \[N_a(\epsilon) N_d(\nu)=e\omega^2,\qquad (\sigma_b-1)(\sigma_c-1)\omega=-1.\]
	
	(b)  We first show that any two values of $e$ differ by an element of $N_aN_{ac}N_dN_{bd}$. For this, we suppose given $\epsilon\in F_{a,c}^\times$, $\nu\in F_{b,d}^\times$, $x,y,e\in F^\times$ and $\omega\in F_{b,c}^\times$ such that $N_{a,c}(\epsilon)=x^2$, $N_{b,d}(\nu)=y^2$, $N_a(\epsilon)N_d(\nu)=e\omega^2$, and we prove that $e\in N_aN_{ac}N_dN_{bd}$. (We could also assume that $(\sigma_b-1)(\sigma_c-1)\omega=1$, but we will see that it follows from the rest.)
	
	By \Cref{biquadratic-triple-norm}(1), there exist $\epsilon_a\in F_a^\times$, $\epsilon_c\in F_c^\times$ and $\epsilon_{ac}\in F_{ac}^\times$ such that $\epsilon=\epsilon_a\epsilon_c\epsilon_{ac}$, as well as $\nu_b\in F_b^\times$, $\nu_d\in F_d^\times$ and $\nu_{bd}\in F_{bd}^\times$ such that $\nu=\nu_b\nu_d\nu_{bd}$. 
 
 We have 
 \[N_a(\epsilon)=N_a(\epsilon_a)N_{ac}(\epsilon_{ac})\epsilon_c^2,\qquad N_d(\nu)=N_d(\nu_d)N_{bd}(\nu_{bd})\nu_b^2.\]
 Define $\omega_1\coloneqq \omega/(\epsilon_c\nu_b)\in F_{b,c}^\times$. Then $N_a(\epsilon)N_d(\nu)=e\omega^2$ may be rewritten as
 \begin{equation}\label{norms-omega1}N_a(\epsilon_a)N_{ac}(\epsilon_{ac})N_d(\nu_d)N_{bd}(\nu_{bd})=e\omega_1^2.\end{equation}
 In particular, $\omega_1^2$ belongs to $F^\times$, hence $\omega_1$ belongs to at least one of $F_b^\times$, $F_c^\times$ and $F_{bc}^\times$. A simple computation now shows that $\omega_1=f\sqrt{b}^i\sqrt{c}^j$, where $f\in F^\times$ and $i,j\in \set{0,1}$. Since $b=(-d)\cdot(-bd)\cdot d^{-2}\in N_dN_{bd}$ and $c=(-a)\cdot(-ac)\cdot c^{-2}\in N_aN_{ac}$, we deduce that $\omega_1^2\in N_aN_{ac}N_dN_{bd}$. Now (\ref{norms-omega1}) implies that $e\in N_aN_{ac}N_dN_{bd}$, as desired.
 
	 For the converse, suppose that $e=N_a(\epsilon_a) N_{ac}(\epsilon_{ac}) N_d(\nu_d) N_{bd}(\nu_{bd})$, where $\epsilon_a\in F_a^\times$, $\epsilon_{ac}\in F_{ac}^\times$, $\nu_d\in F_d^\times$ and $\nu_{bd}\in F_{bd}^\times$. Set $\epsilon=\epsilon_a \epsilon_{ac}$, $\nu=\nu_d \nu_{bd}$ and $\omega=1$. Then $N_{a,c}(\epsilon)\in F^{\times 2}$, $N_{b,d}(\nu)\in F^{\times 2}$, $N_a(\epsilon)N_d(\nu)=e\omega^2$ and $(\sigma_b-1)(\sigma_c-1)\omega=1$, as desired. 
	
	(c) This follows from (b) and \Cref{uu5}.
	\end{proof}

\subsection{Splitting varieties}\label{splitting-varieties-sec}
We now interpret \Cref{uu5} in terms of splitting varieties. The material of this section is not needed for the proofs of Theorems \ref{massey-deg}, \ref{massey-double-deg} and \ref{mainthm-positselski}.

Let $n\geq 2$ be an integer, $a_1,\dots,a_n\in F^\times$, and $V$ be an $F$-variety. Consider the following property: For all field extensions $K/F$ we have
   \begin{equation}\label{splitting-eq}
   \text{$V(K)\neq \emptyset$ $\iff$ $\ang{a_1,\dots,a_n}$ vanishes over $K$.}
   \end{equation}
   In the literature, a variety $V$ satisfying (\ref{splitting-eq}) is sometimes called a \emph{splitting variety} for $\ang{a_1,\dots,a_n}$. 

   The geometry of splitting varieties becomes increasingly complicated as $n$ gets bigger. When $n=2$, a splitting variety for $\ang{a_1,a_2}$ is the $F$-conic corresponding to the symbol $(a_1,a_2)$. Hopkins and Wickelgren \cite{hopkins2015splitting} constructed a splitting variety for $n=3$: it is a torsor under a torus. When $n=4$, a splitting variety was obtained in \cite{guillot2018fourfold}. 
   P\'al and Schlank  \cite{pal2022brauer} constructed splitting varieties for all $n$: their examples are homogeneous spaces under $\on{SL}_n$ with finite supersolvable stabilizers. These varieties were exploited by \cite{harpaz2019massey} for the proof of \Cref{massey-conj} when $F$ is a number field. 

Let $a,b,c,d\in F^\times$, and consider the $F$-torus 
    \[S\coloneqq R_{a,c}(\G_{\on{m}})\times R_{b,d}(\G_{\on{m}})\times \G_{\on{m}}^2\times R_{b,c}(\G_{\on{m}}),\]
whose coordinates we denote by $(\epsilon,\nu,x,y,\omega)$.
Let $T\subset S$ be the $F$-subgroup defined by the equations
\begin{enumerate}
    \item $(\sigma_c-1)\omega=x/N_a(\epsilon)$;
    \item $(\sigma_b-1)\omega=y/N_d(\nu)$;
    \item $N_a(\epsilon)N_d(\nu)=\omega^2$.
\end{enumerate}
Lattice computations show that $T$ is a torus. Consider the $T$-torsor $X\subset S$ given by the equations
\begin{itemize}
    \item[(1')] $(\sigma_c-1)\omega=x\sqrt{b}/N_a(\epsilon)$;
    \item[(2')] $(\sigma_b-1)\omega=y\sqrt{c}/N_d(\nu)$;
    \item[(3')] $N_a(\epsilon)N_d(\nu)=\omega^2$.
\end{itemize}

We now show that $X$ satisfies a variant of (\ref{splitting-eq}), where ``vanishes'' is replaced by ``is defined.''

\begin{prop}\label{generic-var}
    For all field extensions $K/F$, we have $X(K)\neq \emptyset$ if and only if $\ang{a,b,c,d}$ is defined over $K$.
\end{prop}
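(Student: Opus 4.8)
The claim is that the torsor $X \subset S$ is a splitting variety for "$\ang{a,b,c,d}$ is defined." The plan is to match the $K$-points of $X$ directly against the criterion of \Cref{uu5}, which characterizes when the Massey product is defined in terms of the existence of $\epsilon \in F_{a,c}^\times$, $\nu \in F_{b,d}^\times$ and $\omega \in F_{b,c}^\times$ satisfying conditions (1)--(4) there. The defining equations (1'), (2'), (3') of $X$ have clearly been set up to encode exactly those conditions, so the proof should essentially be an unwinding and translation. First I would fix a field extension $K/F$ and spell out what a $K$-point of $X$ is: a tuple $(\epsilon,\nu,x,y,\omega)$ with $\epsilon \in (F_{a,c}\otimes_F K)^\times$, $\nu \in (F_{b,d}\otimes_F K)^\times$, $x,y \in K^\times$, and $\omega \in (F_{b,c}\otimes_F K)^\times$, subject to (1'), (2'), (3'). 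Throughout I would silently replace $F$ by $K$ (all the constructions are functorial in the base field), so it suffices to treat the case $K = F$.

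**The forward direction.**

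Assume $X(F) \neq \emptyset$, and pick a point $(\epsilon,\nu,x,y,\omega)$. I claim this produces data satisfying \Cref{uu5}. Equation (3') gives condition (3) of \Cref{uu5} on the nose. For condition (1), note that applying the norm $N_{a,c} = N_a \circ N_c$ (equivalently, taking the full norm down to $F$) to $\epsilon$ and using (1') should recover $N_{a,c}(\epsilon) = b \cdot (\text{square})$; concretely, equation (1') reads $(\sigma_c - 1)\omega = x\sqrt{b}/N_a(\epsilon)$, and since the left side is a ratio of $\sigma_c$-conjugates its norm $N_{a,c}$ (equivalently $N_a$ applied after noting $N_a(\epsilon) \in F_a^\times$) collapses, forcing $N_{a,c}(\epsilon) \equiv b \bmod F^{\times 2}$. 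Symmetrically (2') yields $N_{b,d}(\nu) \equiv c \bmod F^{\times 2}$, i.e.\ condition (2). Finally, condition (4), namely $(\sigma_b - 1)(\sigma_c - 1)\omega = -1$, should fall out of (1') and (2'): applying $(\sigma_b - 1)$ to (1') and using that $N_a(\epsilon) \in F_a^\times$ is $\sigma_b$-invariant while $\sqrt{b}$ is $\sigma_b$-anti-invariant produces the sign $-1$. So the main content of the forward direction is the bookkeeping that the presence of $\sqrt{b}$ and $\sqrt{c}$ in the primed equations is exactly what upgrades the torus relations to the "twisted" conditions (1), (2), (4) of \Cref{uu5}.

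**The reverse direction and the main obstacle.**

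Conversely, suppose $\ang{a,b,c,d}$ is defined. By \Cref{uu5} there exist $\epsilon \in F_{a,c}^\times$, $\nu \in F_{b,d}^\times$, $\omega \in F_{b,c}^\times$ with $N_{a,c}(\epsilon) = b$ and $N_{b,d}(\nu) = c$ in $F^\times/F^{\times 2}$, $N_a(\epsilon)N_d(\nu) = \omega^2$, and $(\sigma_b-1)(\sigma_c-1)\omega = -1$. I would then read off $x,y \in F^\times$ from (1') and (2'): set $x := N_a(\epsilon) \cdot (\sigma_c - 1)\omega / \sqrt{b}$ and $y := N_d(\nu)\cdot (\sigma_b - 1)\omega/\sqrt{c}$, and the point is to verify these expressions genuinely land in $F^\times$ rather than in the larger étale algebras. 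Here is where I expect the real work: one must check that $(\sigma_c - 1)\omega/\sqrt{b}$ is fixed by all of $\sigma_a,\sigma_b,\sigma_c,\sigma_d$, hence lies in $F^\times$. Invariance under $\sigma_b$ is precisely condition (4) (which supplies the compensating sign against $\sqrt{b}$), invariance under $\sigma_a,\sigma_d$ is clear since $\omega \in F_{b,c}^\times$, and invariance under $\sigma_c$ follows because $(\sigma_c - 1)\omega$ is a ratio of $\sigma_c$-conjugates combined with $\sqrt{b}$ being $\sigma_c$-fixed; one also needs $x \neq 0$, which holds as $\omega$ is a unit. The analogous check produces $y \in F^\times$. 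Equation (3') is then just the relation $N_a(\epsilon)N_d(\nu) = \omega^2$ renamed, and (1'), (2') hold by construction. The subtlety to watch is the interplay of the $\sqrt{b}$, $\sqrt{c}$ factors with the Galois action, and the fact that \Cref{uu5} only pins down $N_{a,c}(\epsilon)$ and $N_{b,d}(\nu)$ up to squares — so I should absorb those square ambiguities into $x$ and $y$, which is exactly what the coordinates $x, y$ are there to do.
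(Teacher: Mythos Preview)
Your overall strategy matches the paper's: reduce to $K=F$, then translate between $F$-points of $X$ and the data in \Cref{uu5}. The forward direction is essentially correct. There are, however, two issues in the reverse direction.

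First, a small slip: $N_a(\epsilon)$ lies in $F_c^\times$, not $F_a^\times$ (since $\epsilon\in F_{a,c}^\times$ and $N_a$ is the norm along $F_{a,c}/F_c$). This matters below.

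Second, and more seriously, your justification for $\sigma_c$-invariance of $x = N_a(\epsilon)\cdot(\sigma_c-1)\omega/\sqrt{b}$ is incorrect. A ratio of $\sigma_c$-conjugates such as $(\sigma_c-1)\omega=\sigma_c(\omega)/\omega$ is \emph{inverted} by $\sigma_c$, not fixed; and $N_a(\epsilon)\in F_c^\times$ is not $\sigma_c$-fixed either. So neither factor is individually invariant, and the invariance of their product is not a formality. What actually makes $x$ land in $F^\times$ is condition (3) of \Cref{uu5}: from $N_a(\epsilon)N_d(\nu)=\omega^2$ and the fact that $N_d(\nu)\in F_b^\times$ is $\sigma_c$-fixed, one gets
\[
(\sigma_c-1)\omega^2=(\sigma_c-1)N_a(\epsilon),
\]
which is exactly the relation needed for the $\sigma_c$-actions on $N_a(\epsilon)$ and on $(\sigma_c-1)\omega$ to cancel. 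The paper's proof carries out precisely this computation (with the roles of $x$ and $y$ swapped), using $(\sigma_b-1)N_d(\nu)=(\sigma_b-1)\omega^2$ for the analogous check. Your check of $\sigma_b$-invariance via condition (4) is fine, but the ``harder'' invariance on each of $x,y$ genuinely needs (3'), and your write-up omits this.
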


\begin{proof}
    Since the formation of $T$ and $X$ commutes with arbitrary field extensions, we may suppose that $K=F$. If $(\epsilon,\nu,x,y,\omega)$ belongs to $X(F)$, then $N_{a,c}(\epsilon)=bx^2$, $N_{b,d}(\nu)=cy^2$ and    
    $(\sigma_b-1)(\sigma_c-1)\omega=(\sigma_b-1)(x\sqrt{b}/N_a(\epsilon))=-1$, hence $\ang{a,b,c,d}$ is defined by \Cref{uu5}.

    Conversely, suppose that $\ang{a,b,c,d}$ is defined. By \Cref{uu5}, there exist $\epsilon\in F_{a,c}^\times$, $\nu\in F_{b,d}^\times$ and $\omega\in F_{b,c}^\times$ such that 
    \[N_a(\epsilon)N_d(\nu)=\omega^2,\qquad (\sigma_b-1)(\sigma_c-1)\omega=-1.\] Define $x,y\in F_{b,c}^\times$ by
    \[x\coloneqq \frac{(\sigma_c-1)\omega\cdot N_a(\epsilon)}{\sqrt{b}},\qquad y\coloneqq \frac{(\sigma_b-1)\omega\cdot N_d(\nu)}{\sqrt{c}}.\]
			Then 
   \[(\sigma_b-1)x=(\sigma_b-1)(\sigma_c-1)\omega\cdot(\sigma_b-1)(N_a(\epsilon))\cdot (1-\sigma_b)\sqrt{b}=(-1)\cdot 1\cdot (-1)=1.\]
	Moreover, since $N_d(\nu)\in F_b^\times$, we have $(\sigma_c-1)(N_a(\epsilon))=(\sigma_c-1)(\omega^2)$, therefore
 \[(\sigma_c-1)x=(\sigma_c-1)^2(\omega)\cdot (\sigma_c-1)(N_a(\epsilon))=(2-2\sigma_c)(\omega)\cdot(\sigma_c-1)(\omega^2)=1.\]
			It follows that $x$ belongs to $F^\times$. Similar calculations show that $y$ belongs to $F^\times$, and hence $(\epsilon,\nu,x,y,\omega)$ belongs to $X(F)$. This completes the proof.
\end{proof}

\begin{cor}\label{odd-defined}
    Let $a,b,c,d\in F^\times$. Then $\ang{a,b,c,d}$ is defined over $F$ if and only if there exists a finite field extension of odd degree $F'/F$ such that $\ang{a,b,c,d}$ is defined over $F'$.
\end{cor}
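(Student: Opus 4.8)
The plan is to deduce this from the splitting variety $X$ constructed in \Cref{generic-var}. Since $X$ is a torsor under the torus $T$, it is classified by a cohomology class $[X]\in H^1(F,T)$, and for every field extension $K/F$ one has $X(K)\neq\emptyset$ if and only if $\on{res}_{K/F}[X]=0$ in $H^1(K,T)$. By \Cref{generic-var}, the Massey product $\ang{a,b,c,d}$ is defined over $K$ exactly when $X(K)\neq\emptyset$, so the assertion reduces to the following purely cohomological statement: $[X]=0$ as soon as its restriction to some odd-degree extension $F'/F$ vanishes. One implication is trivial (take $F'=F$), so only the descent direction requires work.

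The crux, which I would establish first, is that $H^1(F,T)$ is a $2$-primary torsion group. Indeed $T$ is a closed subtorus of $S=R_{a,c}(\G_{\on{m}})\times R_{b,d}(\G_{\on{m}})\times \G_{\on{m}}^2\times R_{b,c}(\G_{\on{m}})$, which splits over $L\coloneqq F_{a,b,c,d}$. The closed immersion $T\hookrightarrow S$ induces a surjection of Galois modules on character lattices $X^{*}(S_L)\twoheadrightarrow X^{*}(T_L)$; since $X^{*}(S_L)$ is a trivial Galois module, so is its quotient $X^{*}(T_L)$, and hence $T$ splits over $L$. As $[L:F]$ divides $16$ and $H^1(L,T_L)=H^1(L,\G_{\on{m}}^r)=0$ by Hilbert~$90$, the identity $\on{cor}_{L/F}\circ\on{res}_{L/F}=[L:F]$ shows that $H^1(F,T)$ is annihilated by the $2$-power $[L:F]$.

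With this in hand the odd-degree descent is the standard restriction--corestriction argument. Suppose $\ang{a,b,c,d}$ is defined over a finite extension $F'/F$ of odd degree, equivalently $\on{res}_{F'/F}[X]=0$. Factoring $F'/F$ as a separable subextension followed by a purely inseparable one, and using that purely inseparable extensions leave the absolute Galois group, hence $H^1(-,T)$, unchanged, I may assume $F'/F$ is separable (its degree is still odd since $\on{char}(F)\neq 2$). Applying $\on{cor}_{F'/F}$ then gives $[F':F]\cdot[X]=\on{cor}_{F'/F}\on{res}_{F'/F}[X]=0$; as $[X]$ lies in the $2$-primary group $H^1(F,T)$ and $[F':F]$ is odd, multiplication by $[F':F]$ is injective on $H^1(F,T)$, so $[X]=0$ and $X(F)\neq\emptyset$. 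By \Cref{generic-var} this means $\ang{a,b,c,d}$ is defined over $F$. The only genuinely non-formal ingredient is the identification of $L=F_{a,b,c,d}$ as a $2$-power splitting field of $T$, which pins down the $2$-primary nature of $H^1(F,T)$; everything else is formal.
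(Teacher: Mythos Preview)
Your proof is correct and follows essentially the same line as the paper: reduce to showing $[X]\in H^1(F,T)$ has $2$-power order, then apply restriction--corestriction along the odd-degree extension. The only difference is cosmetic: the paper establishes the $2$-power order of the single class $[X]$ by noting that $\ang{a,b,c,d}$ vanishes over $F_{a,b,c,d}$ (since all entries become squares), hence $X(F_{a,b,c,d})\neq\emptyset$, whereas you prove the stronger fact that all of $H^1(F,T)$ is $2$-primary by showing that $T$ itself splits over $F_{a,b,c,d}$.
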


\begin{proof}
    The Massey product $\ang{a,b,c,d}$ vanishes if $a,b,c,d$ are all squares (this is immediate for example from \Cref{dwyer}(iv)), hence $\ang{a,b,c,d}$ vanishes over $F_{a,b,c,d}$. By \Cref{generic-var}, this implies that $X(F_{a,b,c,d})\neq\emptyset$,  that is, the $T$-torsor $X$ is split by $F_{a,b,c,d}/F$. Thus, by a restriction-corestriction argument, the order of $[X]\in H^1(F,T)$ is a power of $2$. Therefore, if $X$ is split by an extension of odd degree, the order of $[X]$ in $H^1(F,T)$ is odd and a power of two, hence $1$.
\end{proof}

\begin{rmk}
    The variety $X$ is a torsor under a torus. In contrast, all known splitting varieties for $n=4$ are quite involved. In particular, while \Cref{massey-conj} predicts that \Cref{odd-defined} should also be true if ``defined'' is replaced by ``vanishing,'' we do not know how to prove it.
\end{rmk}

\subsection{Galois \texorpdfstring{$U_5$}{U5}-algebras} 

Let $a,b,c,d\in F^\times$. In \cite[Theorem A]{guillot2018fourfold}, an equivalent condition for the vanishing of the Massey product $\ang{a,b,c,d}$ was given. In this section, we recover this result using our methods, and then we specialize to the case $a=d$. Our proof and that of \cite[Theorem A]{guillot2018fourfold} are closely related. In particular the short exact sequence (\ref{exactexact}) below has been used in the proof of \cite[Theorem A]{guillot2018fourfold}; see \cite[\S 2.4 and Proof of Theorem 3.3]{guillot2018fourfold}.

   \begin{prop}\label{u5}
  Let $a,b,c,d\in F^\times$. The Massey product $\langle a,b,c,d\rangle$ vanishes if and only if there exist $\alpha\in F_a^\times$ and $\delta\in F_d^\times$
such that
\begin{enumerate}
			\item $N_{a}(\alpha)=b$ in $F^\times/F^{\times 2}$;
			\item $N_{d}(\delta)=c$ in $F^\times/F^{\times 2}$;
			\item $(\alpha,\delta)=0$ in $\on{Br}(F_{a,d})$.
			\end{enumerate}
\end{prop}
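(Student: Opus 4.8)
The plan is to recast the vanishing of $\ang{a,b,c,d}$ as the existence of a Galois $U_5$-algebra via Dwyer's theorem, and then to analyze such an algebra through its two $U_3$-corners. By \Cref{dwyer-cor}, $\ang{a,b,c,d}$ vanishes if and only if there is a Galois $U_5$-algebra $L/F$ with $L^{Q_5}=F_{a,b,c,d}$. Restricting a matrix in $U_5$ to the principal submatrix on rows and columns $1,2,3$ (respectively $3,4,5$) defines surjective homomorphisms $U_5\to U_3$ exactly as in (\ref{phi-phi'}); composing with $L$ produces Galois $U_3$-algebras over $F_{a,b}$ and $F_{c,d}$, which by \Cref{uu3} are of the form $(F_{a,b})_\alpha$ and $(F_{c,d})_\delta$ for some $\alpha\in F_a^\times$, $\delta\in F_d^\times$ with $N_a(\alpha)=b$ and $N_d(\delta)=c$ in $F^\times/F^{\times 2}$. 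These are precisely conditions (1) and (2), so the content of the proposition is that the two corner algebras extend to a common Galois $U_5$-algebra if and only if $(\alpha,\delta)=0$ in $\on{Br}(F_{a,d})$.

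To produce condition (3) I would isolate the obstruction to assembling these corners into a full $U_5$-algebra as a class in $H^2(F,Z_5)=\on{Br}(F)[2]$, presenting the relevant quotient of $U_5$ as a central extension by $Z_5\simeq \mu_2$ via the short exact sequence (\ref{exactexact}). Since $\alpha$ lives over $F_a$ and is fed in through the $\sigma_a$-strand of the group while $\delta$ lives over $F_d$ and enters through the $\sigma_d$-strand, I would twist the coefficient module by the cocycle of $(\chi_a,\chi_d)$; Faddeev--Shapiro then identifies the piece of $H^2$ carrying the obstruction with $\on{Br}(F_{a,d})[2]$, and an explicit cocycle computation should show that the connecting map sends the pair $(\alpha,\delta)$ to the symbol $(\alpha,\delta)$ over $F_{a,d}$. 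Thus the lift to $U_5$ exists precisely when this symbol vanishes, i.e. under condition (3). The existential quantifier over $\alpha,\delta$ matches the freedom in the $U_3$-corners recorded by \Cref{uu3} together with the scalar ambiguity of \Cref{differ-by-scalar}.

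The two implications then follow formally. For the forward direction, a Galois $U_5$-algebra $L$ yields $\alpha,\delta$ as above, and the existence of the lift forces the central obstruction $(\alpha,\delta)$ to vanish. For the converse, given $\alpha,\delta$ satisfying (1)--(3), I would first observe that $(a,b)=(c,d)=0$ by \Cref{cup-norm} and that $(b,c)=0$ is automatic: corestricting $(\alpha,\delta)=0$ along $F_{a,d}/F_a$ gives $(\alpha,c)=0$ in $\on{Br}(F_a)$ by the projection formula (since $N_{F_{a,d}/F_a}(\delta)=N_d(\delta)=c$), and corestricting once more along $F_a/F$ gives $(N_a(\alpha),c)=(b,c)=0$. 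Hence the $U_4$-corners exist by \Cref{massey-triple} and \Cref{uu4}, and the vanishing of $(\alpha,\delta)$ supplies exactly the splitting needed to lift them (glued as in \Cref{glueing}) to a common $U_5$-algebra, so $\ang{a,b,c,d}$ vanishes.

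The hard part will be the explicit identification of the central obstruction with the single quaternion symbol $(\alpha,\delta)$ over $F_{a,d}$. Concretely, one must choose compatible cochain lifts of the off-diagonal coordinates $u_{14},u_{24},u_{25}$ of $U_5$ and compute the resulting $2$-cocycle valued in the central coordinate $u_{15}$; organizing the computation so that the defining relations of $U_5$ among $\sigma_a,\sigma_b,\sigma_c,\sigma_d$ collapse it to the one symbol $(\alpha,\delta)$ over $F_{a,d}$, rather than to a more complicated sum of symbols, is the delicate point, and it is here that the sequence (\ref{exactexact}) and the twisting by $(\chi_a,\chi_d)$ do the essential bookkeeping.
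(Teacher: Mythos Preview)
Your overall strategy matches the paper's: use the exact sequence (\ref{exactexact}), read off the two $U_3$-corners as $(\alpha,\delta)$, twist the kernel, and identify the lifting obstruction with the symbol $(\alpha,\delta)$ over $F_{a,d}$ via Shapiro. But there is a concrete error in how you describe the kernel. The sequence (\ref{exactexact}) is \emph{not} a central extension by $Z_5\simeq\mu_2$: its kernel $P$ is the abelian normal subgroup of $U_5$ defined by $u_{12}=u_{13}=u_{23}=u_{34}=u_{35}=u_{45}=0$, which is isomorphic to $(\Z/2\Z)^4$, and the $(U_3\times U_3)$-action on it is nontrivial. This is exactly what makes your Shapiro step work: the paper identifies $P\cong N\otimes N'$ as $(U_3\times U_3)$-modules, and after twisting by the pair $(h,h')\colon\Gamma_F\to U_3\times U_3$ one gets $P_{\text{tw}}=\on{Ind}_{F_{a,d}}^{F}(\Z/2\Z)$, whence $H^2(F,P_{\text{tw}})=H^2(F_{a,d},\Z/2\Z)$. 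If the kernel really were $Z_5$ with trivial action, twisting would do nothing and the obstruction would sit in $\on{Br}(F)[2]$, not $\on{Br}(F_{a,d})[2]$; your sentence ``twist the coefficient module by $(\chi_a,\chi_d)$; Faddeev--Shapiro then identifies \ldots'' is therefore incoherent as written. Relatedly, your last paragraph about ``cochain lifts of $u_{14},u_{24},u_{25}$'' with a $2$-cocycle valued in $u_{15}$ describes the lift $\cl{U}_5\to U_5$ (relevant to \emph{defined} versus \emph{vanishes}), not the lift $U_3\times U_3\to U_5$ you actually need.

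The paper also pins down the obstruction class conceptually, avoiding cochain bookkeeping: the extension class of (\ref{exactexact}) in $H^2(U_3\times U_3,P)$ equals the cup product $t\cup t'$ of the tautological $1$-cocycles $t\colon U_3\to N$ and $t'\colon U_3\to N'$, and under $(h,h')^*$ together with Shapiro these map to $[\alpha]$ and $[\delta]$, so the obstruction is literally $(\alpha,\delta)\in\on{Br}(F_{a,d})$. This replaces your ``explicit cocycle computation'' by a one-line naturality argument. Finally, for the converse you do not need $U_4$-corners or \Cref{glueing}: once $(\alpha,\delta)=0$, the homomorphism $(h,h')\colon\Gamma_F\to U_3\times U_3$ lifts directly to $U_5$ by vanishing of the pulled-back extension class, and \Cref{dwyer} finishes. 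Your detour through \Cref{glueing} would only produce a $\cl{U}_5$-algebra, leaving exactly the central obstruction you were trying to kill.
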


\begin{proof}
Write $P$ for the subgroup of $U_5$ defined by $u_{12}=u_{13}=u_{23}=u_{34}=u_{35}=u_{45}=0$.
This is an abelian normal subgroup of $U_5$. There is an exact sequence
\begin{equation}\label{exactexact}
1\to P \to U_5 \to U_3\times U_3 \to 1,
\end{equation}
so $P$ has a natural structure of a $(U_3\times U_3)$-module.

Let $N$ and $S$ be the subgroups of $U_3$ as in the proof of Proposition \ref{uu3}(a).
In particular, $N$ is an $S$-module (by conjugation). Let
$N'$ and $S'$ be the corresponding subgroups of $U_3$ as in the proof of Proposition \ref{uu3}(b).

The bilinear map
\[ N\times N'\to P \]
taking a pair of matrices
\[
		\begin{bmatrix}
			1 & 0 & f_1  \\
			& 1 & e_1  \\
			&    & 1
		\end{bmatrix},
		\ \ \ \begin{bmatrix}
			1 & e_2 & f_2  \\
			& 1 & 0  \\
			&    & 1
		\end{bmatrix}
		\]
to
\[
        \begin{bmatrix}
			1 & 0 & 0 & f_1 e_2 & f_1 f_2 \\
			& 1 & 0 & e_1 e_2 & e_1 f_2 \\
			&   & 1 & 0 & 0 \\
            &   &   & 1 & 0 \\
            &   &   &   & 1
		\end{bmatrix}
		\]
yields an isomorphism of $(U_3\times U_3)$-modules
\[
N\otimes N'\xrightarrow{\sim} P.
\]

The natural projections $t:U_3\to N$ and $t':U_3\to N'$ are  $1$-cocycles.
A direct calculation shows that the class in $H^2(U_3\times U_3,P)\simeq H^2(U_3\times U_3,N\otimes N')$ of the exact sequence (\ref{exactexact})
is equal to the cup-product $t\cup t'$.

Let $\alpha\in F_a^\times$ be such that $N_a(\alpha)=b\in F^\times/F^{\times 2}$ and let $h:\Gamma_F\to U_3$ be a
group homomorphism corresponding to the Galois $U_3$-algebra $(F_{a,b})_\alpha$ via (\ref{galois-alg}). Similarly, let 
$\delta\in F_d^\times$ be such that $N_d(\delta)=c\in F^\times/F^{\times 2}$ and let $h':\Gamma_F\to U_3$ be a group homomorphism corresponding to $(F_{c,d})_{\delta}$ via (\ref{galois-alg}). 

As in the proof of Proposition \ref{uu3}, the $\Gamma_F$-module $N$, where $\Gamma_F$ acts via $h$, is the induced module $\on{Ind}_{F_a}^{F}(\Z/2\Z)$. It follows that the image of $t$ 
under the composition
\[
H^1(U_3,N)\xrightarrow{h^*} H^1(F,N)=H^1(F_a,\Z/2\Z)=F_a^\times/F_a^{\times 2}
\]
is equal to the class of $\alpha$. Similarly, the image of $t'$ 
under the composition
\[
H^1(U_3,N')\xrightarrow{(h')^*} H^1(F,N')=H^1(F_d,\Z/2\Z)=F_d^\times/F_d^{\times 2}
\]
is equal to the class of $\delta$.

Note that
\[
P=N\otimes N'=\on{Ind}_{F_{a,d}}^{F}(\Z/2\Z),
\]
where we view $P$ as a $\Gamma_F$-module via $(h,h'):\Gamma_F\to U_3\times U_3$.

Consider the commutative diagram
\[
		\begin{tikzcd}
			H^1(U_3,N)\otimes H^1(U_3,N') \arrow[d,"h^*\otimes (h')^*"] \arrow[r,"\cup"] & H^2(U_3 \times U_3,P) \arrow[d,"\text{$(h,h')^*$}"] \\
			H^1(F,N)\otimes H^1(F,N')  \arrow[d,equal] \arrow[r,"\cup"] & H^2(F,P) \arrow[d,equal] \\	
            H^1(F_a,\Z/2\Z)\otimes H^1(F_d,\Z/2\Z)  \arrow[r,"\cup"] & H^2(F_{a,d},\Z/2\Z).
		\end{tikzcd}
		\]
It follows that the image of $t\otimes t'$ under the composition
\[H^2(U_3\times U_3, P)\xrightarrow{(h,h')^*} H^2(F,P)=H^2(F_{a,d},\Z/2\Z)\subset\on{Br}(F_{a,d})\]
is equal to the cup-product $(\alpha,\delta)$. The homomorphism $(h,h'):\Gamma_F\to U_3\times U_3$ lifts to a homomorphism $\Gamma_F\to U_5$ if and only if the pullback of the exact sequence (\ref{exactexact}) via $(h,h')$ is split, that is, if and only if the image of $t\otimes t'$
in $H^2(F,P)=H^2(F_{a,d},\Z/2\Z)$ is trivial. Since this image is $(\alpha,\delta)\in H^2(F_{a,d},\Z/2\Z)\subset\on{Br}(F_{a,d})$, this and \Cref{dwyer}(iv) imply the conclusion.
\end{proof}

The following result is a reformulation of \cite[Theorem A]{guillot2018fourfold}. 

\begin{cor}\label{u5-rephrase}
    Let $a,b,c,d\in F^\times$ be such that $(a,b)=(c,d)=0$ in $\on{Br}(F)$. Let $\alpha\in F_a^\times$ and $\delta\in F_d^\times$ be such that $N_{a}(\alpha)=b$ in $F^\times/F^{\times 2}$ and $N_{d}(\delta)=c$ in $F^\times/F^{\times 2}$. The Massey product $\langle a,b,c,d\rangle$ vanishes if and only if there exist $x,y\in F^\times$ such that $(\alpha x,\delta y)=0$ in $\on{Br}(F_{a,d})$.
\end{cor}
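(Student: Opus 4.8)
The plan is to deduce \Cref{u5-rephrase} from \Cref{u5} by absorbing the extra flexibility in the choice of $\alpha$ and $\delta$ into the scalars $x,y$. The key observation is that \Cref{u5} characterizes vanishing of $\ang{a,b,c,d}$ by the \emph{existence} of $\alpha'\in F_a^\times$ and $\delta'\in F_d^\times$ satisfying $N_a(\alpha')=b$, $N_d(\delta')=c$ in $F^\times/F^{\times 2}$ and $(\alpha',\delta')=0$ in $\on{Br}(F_{a,d})$, whereas \Cref{u5-rephrase} fixes one particular pair $\alpha,\delta$ and instead quantifies over $x,y\in F^\times$. So the whole content is that ranging over all admissible $\alpha',\delta'$ is the same as scaling a fixed admissible pair by elements of $F^\times$.

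First I would verify that the hypotheses $(a,b)=(c,d)=0$ guarantee that admissible $\alpha,\delta$ exist: by \Cref{cup-norm}, $(a,b)=0$ gives $b\in N_a$, so there is $\alpha\in F_a^\times$ with $N_a(\alpha)=b$, and similarly $\delta\in F_d^\times$ with $N_d(\delta)=c$; this shows the statement is not vacuous and fixes the reference pair. Next I would show that if $\alpha'\in F_a^\times$ is any element with $N_a(\alpha')=b$ in $F^\times/F^{\times 2}$, then $\alpha'$ and $\alpha$ differ by an element of $F^\times$ times a norm-one element, and in fact (using that two elements with the same norm modulo squares can be compared) $\alpha' = \alpha x \cdot (\sigma_a w / w)$ for some $x\in F^\times$ and $w\in F_a^\times$; the crucial point is that the class $(\alpha',\delta)$ in $\on{Br}(F_{a,d})$ is insensitive to multiplying $\alpha'$ by a norm from $F_{a,d}$ to $F_d$, because such a norm becomes a square (hence trivial in the quaternion symbol) after restriction, and $\sigma_a$-twists of $\alpha$ become norms over $F_{a,d}$. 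Thus $(\alpha',\delta')=(\alpha x,\delta y)$ in $\on{Br}(F_{a,d})$ for suitable $x,y\in F^\times$, and conversely every $(\alpha x,\delta y)$ arises from an admissible pair $(\alpha',\delta')=(\alpha x,\delta y)$, since $N_a(\alpha x)=bx^2\equiv b$ and $N_d(\delta y)=cy^2\equiv c$ modulo squares.

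The main obstacle I anticipate is the bookkeeping in the previous step: showing precisely that the set $\{(\alpha',\delta') : \alpha',\delta' \text{ admissible}\}$ maps, under the symbol $(-,-)$ in $\on{Br}(F_{a,d})$, onto the set $\{(\alpha x,\delta y) : x,y\in F^\times\}$. The subtlety is that ``admissible'' only fixes $N_a(\alpha')$ modulo squares, so $\alpha'$ can differ from $\alpha$ by both an $F^\times$-scalar and a genuinely $F_a$-valued norm-one factor; I must check that the latter factors do not enlarge the set of achievable Brauer classes, i.e.\ that multiplying $\alpha'$ by $\sigma_a(w)/w$ (with $w\in F_a^\times$) leaves $(\alpha',\delta)$ unchanged in $\on{Br}(F_{a,d})$. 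This should follow from the projection formula together with the fact that $\sigma_a(w)/w$ lies in the kernel of $N_a$, hence becomes a norm from $F_{a,d}/F_d$ and pairs trivially with $\delta\in F_d^\times$ in the quaternion symbol over $F_{a,d}$.

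With that matching in hand, the equivalence is immediate: by \Cref{u5}, $\ang{a,b,c,d}$ vanishes iff some admissible pair $(\alpha',\delta')$ has $(\alpha',\delta')=0$ in $\on{Br}(F_{a,d})$, and by the correspondence just established this holds iff $(\alpha x,\delta y)=0$ for some $x,y\in F^\times$, which is exactly the stated condition. I would write the argument so that the ``only if'' direction extracts $x,y$ from a vanishing admissible pair and the ``if'' direction feeds $(\alpha x,\delta y)$ back into \Cref{u5} as a new admissible pair.
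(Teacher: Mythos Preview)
Your strategy matches the paper's and is correct in outline: the ``if'' direction feeds $(\alpha x,\delta y)$ directly into \Cref{u5}, and for ``only if'' one compares an admissible pair $(\alpha',\delta')$ coming from \Cref{u5} to the fixed $(\alpha,\delta)$ via Hilbert~90.

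The gap is in your justification of why the norm-one factor can be discarded. You assert that multiplying $\alpha'$ by $\sigma_a(w)/w$ leaves $(\alpha',\delta)$ \emph{unchanged} in $\on{Br}(F_{a,d})$, and propose to show this by arguing that $\sigma_a(w)/w$, having trivial $N_a$, ``becomes a norm from $F_{a,d}/F_d$'' and hence pairs trivially with $\delta\in F_d^\times$. This does not work: norms $N_{F_{a,d}/F_d}(\xi)$ lie in $F_d^\times$, whereas $\sigma_a(w)/w\in F_a^\times$, so the claim does not even type-check; and even if the first slot were such a norm, the projection formula only controls the corestriction of the symbol to $\on{Br}(F_d)$, not the symbol itself in $\on{Br}(F_{a,d})$. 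In fact $(\sigma_a(w)/w,\delta)_{F_{a,d}}$ is generally nonzero, so the symbol is \emph{not} insensitive to this factor.

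The fix is simpler than what you attempted, and is exactly what the paper does. One has
\[
\frac{\sigma_a(w)}{w}=\frac{w\,\sigma_a(w)}{w^{2}}=\frac{N_a(w)}{w^{2}},
\]
so $\sigma_a(w)/w$ equals the scalar $N_a(w)\in F^\times$ times a square in $F_a^\times$. Hence if $\alpha'=\alpha\cdot t\cdot\sigma_a(w)/w$ with $t\in F^\times$, then $\alpha'=\alpha x\, w^{-2}$ with $x\coloneqq tN_a(w)\in F^\times$, and therefore $(\alpha',\delta')=(\alpha x,\delta y)$ in $\on{Br}(F_{a,d})$. In other words, the norm-one factor does not leave the symbol unchanged; rather, its effect is already accounted for by varying $x$. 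With this correction your argument goes through and coincides with the paper's proof.
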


\begin{proof}
    Recall that $\alpha$ and $\delta$ exist by \Cref{cup-norm}. Suppose $(\alpha x, \delta y)=0$ in $\on{Br}(F_{a,d})$ for some $x,y\in F^\times$. Since $N_a(\alpha x)=b$ in $F^\times/F^{\times 2}$ and $N_d(\delta y)=c$ in $F^\times /F^{\times 2}$, the Massey product $\ang{a,b,c,d}$ vanishes by \Cref{u5}. 
    
    Conversely, if $\ang{a,b,c,d}$ vanishes, then \Cref{u5} gives $\alpha'\in F_a^\times$ and $\delta'\in F_d^\times$ such that $N_a(\alpha')=b$ in $F^\times/F^{\times 2}$, $N_d(\delta')=c$ in $F^\times/F^{\times 2}$ and $(\alpha',\delta')=0$ in $\on{Br}(F_{a,d})$. There exist $u_a\in F^\times$ and $u_d\in F^\times$ such that $N_a(\alpha')=N_a(\alpha)u_a^2=N_a(\alpha u_a)$ and $N_d(\delta')=N_d(\delta)u_d^2=N_d(\delta u_d)$. Now Hilbert's Theorem~90 implies the existence of $\eta_a\in F_a^\times$ such that $\alpha'=\alpha u_a (\sigma_a-1)\eta_a=\alpha u_aN_a(\eta_a)\eta_a^{-2}$. Similarly, there exists $\eta_d\in F_d^\times$ such that $\delta'=\delta u_d N_d(\eta_d)\eta_d^{-2}$. Set $x\coloneqq N_a(\eta_a)u_a\in F^\times$ and $y\coloneqq N_d(\eta_d)u_d\in F^\times$. Then
    \[0=(\alpha',\delta')=(\alpha x \eta_a^{-2},\delta y \eta_d^{-2})=(\alpha x,\delta y)\qquad \text{in $\on{Br}(F_{a,d})$.}\]
    as desired.
    \end{proof}

\begin{cor}\label{u5-cor}
			Let $a,b,c\in F^\times$ be such that $(a,b)=(c,a)=0$ in $\on{Br}(F)$. Let $\alpha,\delta\in F_a^\times$ be such that $N_a(\alpha)=b$ and $N_a(\delta)=c$. The Massey product $\ang{a,b,c,a}$ vanishes over $F$ if and only if there exist $x,y\in F^\times$ such that $(\alpha x, \delta y)=(\alpha x,c)=0$ in $\on{Br}(F_a)$.
\end{cor}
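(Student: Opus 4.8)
The plan is to reduce immediately to \Cref{u5-rephrase} by taking $d=a$, and then to unwind what the resulting Brauer-class condition means over the \emph{degenerate} algebra $F_{a,a}$. Since $(a,b)=(c,a)=0$ and $N_a(\alpha)=b$, $N_a(\delta)=c$ (in particular these hold modulo squares), \Cref{u5-rephrase} applies with $d=a$ and tells us that $\ang{a,b,c,a}$ vanishes if and only if there exist $x,y\in F^\times$ with $(\alpha x,\delta y)=0$ in $\on{Br}(F_{a,a})$, where $F_{a,a}=F[x_a,x_d]/(x_a^2-a,x_d^2-a)$. The entire content of the corollary is then the observation that $F_{a,a}$ is not a field, and a careful analysis of the symbol over its two components.

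First I would record the splitting $F_{a,a}\cong F_a\times F_a$, realized by the two $F$-algebra projections $p_\pm\colon F_{a,a}\to F_a$ determined by $x_a\mapsto \sqrt{a}$ and $x_d\mapsto \pm\sqrt{a}$ (both are well defined since $x_a^2-a$ and $x_d^2-a$ map to $0$). As $\Spec$ of a product of fields is a disjoint union, $\on{Br}(F_{a,a})=\on{Br}(F_a)\times\on{Br}(F_a)$, so a class vanishes in $\on{Br}(F_{a,a})$ if and only if both $(p_+)_*$ and $(p_-)_*$ of it vanish in $\on{Br}(F_a)$. The delicate bookkeeping here is to track the two distinct square roots $x_a,x_d$ of $a$: the element $\alpha\in F_a^\times$ sits in $F_{a,a}$ via $\sqrt{a}\mapsto x_a$, so $p_\pm(\alpha)=\alpha$, whereas $\delta\in F_d^\times=F_a^\times$ sits in via $\sqrt{d}\mapsto x_d$, so $p_+(\delta)=\delta$ but $p_-(\delta)=\sigma_a(\delta)$. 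Consequently $(\alpha x,\delta y)=0$ in $\on{Br}(F_{a,a})$ if and only if both $(\alpha x,\delta y)=0$ and $(\alpha x,\sigma_a(\delta)\,y)=0$ hold in $\on{Br}(F_a)$.

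Finally I would convert this pair of conditions into the single pair appearing in the statement using $\delta\cdot\sigma_a(\delta)=N_a(\delta)=c$. Adding the two symbols and using bimultiplicativity together with the fact that $y^2$ is a square gives
\[(\alpha x,\delta y)+(\alpha x,\sigma_a(\delta)\,y)=(\alpha x,\delta\,\sigma_a(\delta)\,y^2)=(\alpha x,c)\quad\text{in }\on{Br}(F_a).\]
Hence, once $(\alpha x,\delta y)=0$ is imposed, the condition $(\alpha x,\sigma_a(\delta)\,y)=0$ is equivalent to $(\alpha x,c)=0$. Thus the system $(\alpha x,\delta y)=(\alpha x,\sigma_a(\delta)\,y)=0$ is equivalent to $(\alpha x,\delta y)=(\alpha x,c)=0$, which is exactly the asserted criterion.

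I expect no genuinely hard step: the only point requiring care is the identification $p_-(\delta)=\sigma_a(\delta)$ in the second paragraph, i.e.\ correctly distinguishing the two embeddings $F_a,F_d\hookrightarrow F_{a,a}$ so that the $p_-$-component produces the conjugate of $\delta$ rather than $\delta$ itself. Everything after that is a one-line symbol manipulation in $\on{Br}(F_a)$.
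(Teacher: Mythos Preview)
Your proposal is correct and follows essentially the same approach as the paper: both reduce to \Cref{u5-rephrase} with $d=a$, split $F_{a,a}\cong F_a\times F_a$, observe that $\alpha$ maps to $(\alpha,\alpha)$ while $\delta$ maps to $(\delta,\sigma_a(\delta))$, and then use $\delta\cdot\sigma_a(\delta)=N_a(\delta)=c$ to rewrite the pair of vanishing conditions as $(\alpha x,\delta y)=(\alpha x,c)=0$. The only cosmetic difference is that the paper packages the two projections $p_\pm$ into a single explicit isomorphism $\phi\colon F_{a,a}\to F_a\times F_a$ sending $v_a\mapsto(u_a,u_a)$ and $w_a\mapsto(u_a,-u_a)$.
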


  \begin{proof}
			Write $F_a=F[u_a]/(u_a^2-a)$ and $F_{a,a}=F[v_a,w_a]/(v_a^2-a,w_a^2-a)$. We have an $F$-algebra isomorphism
			\[\phi\colon F_{a,a}\xrightarrow{\sim} F_a\times F_a,\qquad v_a\mapsto (u_a,u_a), \quad w_a\mapsto (u_a,-u_a).\]
		      If $\pi=\pi_1+\pi_2 v_a+\pi_3 w_a +\pi_4 v_aw_a\in F_{a,a}$, then
			\begin{equation}\label{phi-general-element}\phi(\pi)=(\pi_1+a\pi_4+(\pi_2+\pi_3)u_a,\pi_1-a\pi_4+(\pi_2-\pi_3)u_a).\end{equation}
   Let $x,y\in F^\times$. Since $\alpha$ is in the $F$-span of $1$ and $v_a$, and $\delta$ is in the $F$-span of $1$ and $w_a$, by (\ref{phi-general-element}) we have $\phi(\alpha x)=(\alpha x,\alpha x)$ and $\phi(\delta y)=(\delta y, \sigma_a(\delta)y)$. It follows that, letting \[\varphi_*\colon \on{Br}(F_{a,a})\xrightarrow{\sim} \on{Br}(F_a)\times \on{Br}(F_a)\]
   be the isomorphism induced by $\varphi$, we have 
   \[\varphi_*((\alpha x, \delta y))=((\alpha x, \delta y),(\alpha x, \sigma_a(\delta) y)).\] 
    In particular, $(\alpha x, \delta y)=0$ in  $\on{Br}(F_{a,a})$ if and only if $(\alpha x, \delta y)=(\alpha x, \sigma_a(\delta) y)=0$ in $\on{Br}(F_a)$. Since \[(\alpha x, \delta y)+(\alpha x, \sigma_a(\delta) y)=(\alpha x, c)\qquad \text{in $\on{Br}(F_a)$},\] we deduce that $(\alpha x,\delta y)=0$ in $\on{Br}(F_{a,a})$ if and only if $(\alpha x,\delta y)=(\alpha x,c)=0$ in $\on{Br}(F_a)$. The conclusion follows from \Cref{u5-rephrase}.
		\end{proof}

		\section{Proof of Theorem \ref{massey-deg}}\label{massey-deg-section}
		Let $a,b,c,d\in F^{\times}$, suppose that $b+c=1$ and let $v_1,v_2,u_1,u_2\in F$ be such that
		\begin{equation}\label{vi-ui-def}
			\begin{cases}
				v_1^2-bv_2^2=a,\\
				u_1^2-cu_2^2=d,\\
				v_1v_2u_1u_2(v_1+v_2)(u_1+u_2)(v_1+u_1)\neq 0.
			\end{cases}
		\end{equation}
  By \Cref{cup-norm}, this implies that $(a,b)=(b,c)=(c,d)=0$ in $\on{Br}(F)$.

		Define $r,s,t\in F^\times$ as follows:
		\begin{align*}
			r\coloneqq &2(v_1+v_2)(u_1+u_2)v_2u_2,\\
			s\coloneqq &2(v_1+u_1)(u_1+u_2),\\
			t\coloneqq &2(v_1+u_1)(v_1+v_2).
		\end{align*}	 
		
		As we will explain in \Cref{conclude-proof}, the proof of \Cref{massey-deg} will follow from the next two propositions.

		\begin{prop}\label{lambda-criterion}
			Suppose that $a=d$.
			
			(a) The Massey product $\ang{a,b,c,a}$ is defined over $F$ if and only if $r\in N_aN_{ab}N_{ac}$.
			
			(b) The Massey product $\ang{a,b,c,a}$ vanishes over $F$ if and only if $t\in N_cN_{ac}N_{bc}$.
		\end{prop}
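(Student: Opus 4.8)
The plan is to establish the two equivalences separately, in each case reducing to one of the criteria proved in \Cref{u5-bar-section} and then performing an explicit norm computation with the parameters $v_1,v_2,u_1,u_2$ (the genericity condition in (\ref{vi-ui-def}) guarantees that all the elements occurring below are nonzero and hence invertible).

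\emph{Part (a).} Since $a=d$, the norm map $N_d$ coincides with $N_a$ and $F_{bd}=F_{ab}$, so $N_{bd}=N_{ab}$ and therefore $N_aN_{ac}N_dN_{bd}=N_aN_{ab}N_{ac}$. By \Cref{u5-e}(c) the Massey product $\ang{a,b,c,a}$ is defined if and only if the associated scalar $e\in F^\times$ lies in $N_aN_{ab}N_{ac}$, and by \Cref{u5-e}(b) the class of $e$ in $F^\times/N_aN_{ab}N_{ac}$ is independent of the choices involved. It therefore suffices to exhibit one admissible pair $(\epsilon,\nu)\in F_{a,c}^\times\times F_{b,a}^\times$ with $N_{a,c}(\epsilon)=b$ and $N_{b,a}(\nu)=c$ in $F^\times/F^{\times2}$, to read off the corresponding $e$ and $\omega$ from the defining relation $N_a(\epsilon)N_d(\nu)=e\omega^2$ of \Cref{u5-e}(a), and to check that $e\equiv r$ in $F^\times/N_aN_{ab}N_{ac}$. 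The identities $v_1^2-bv_2^2=a$ and $u_1^2-cu_2^2=a$ furnish natural building blocks: for example $N_a(v_1+\sqrt a)=bv_2^2$ and $N_a(u_1+\sqrt a)=cu_2^2$, so one can assemble $\epsilon$ and $\nu$ out of such elements, solve for $\omega$, and simplify. The resulting formula $e=r\pmod{N_aN_{ab}N_{ac}}$ is the computational heart of this half, which I would isolate as a separate lemma.

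\emph{Part (b).} Here I would start from \Cref{u5-cor}: with $\alpha,\delta\in F_a^\times$ satisfying $N_a(\alpha)=b$ and $N_a(\delta)=c$ (for instance $\alpha=(v_1+\sqrt a)/v_2$ and $\delta=(u_1+\sqrt a)/u_2$), the product $\ang{a,b,c,a}$ vanishes if and only if there exist $x,y\in F^\times$ with $(\alpha x,\delta y)=0$ and $(\alpha x,c)=0$ in $\on{Br}(F_a)$. I would then decouple the two conditions on $x$. For the second, the condition $(\alpha x,c)=0$ says that $\alpha x$ is a norm from $F_{a,c}/F_a$, and a direct computation shows that its set of solutions $x$ is exactly the coset $tN_cN_{ac}$. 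For the first, the condition $(\alpha x,\delta y)=0$ is solvable in $y$ for a given $x$ if and only if $x\in N_cN_{bc}$; this is the step where I would invoke the theory of Albert forms attached to the relevant biquaternion algebra, rather than a naive manipulation of quaternion symbols. Combining the two, the system admits a common solution precisely when $tN_cN_{ac}\cap N_cN_{bc}\neq\emptyset$, and since these images of norm maps are subgroups with $N_cN_c=N_c$, this is equivalent to $t\in N_cN_{ac}N_{bc}$, as desired.

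The main obstacle in both parts is the same in spirit: the criteria \Cref{u5-e}(c) and \Cref{u5-cor} convert the theorem into explicit membership questions in products of norm subgroups, and the genuine work is the bookkeeping that pins down the relevant class. In (a) the delicate point is choosing $(\epsilon,\nu)$ so that $\omega$ can be written down cleanly and $e$ computed and matched with $r$; in (b) the harder of the two equations is $(\alpha x,\delta y)=0$, whose reduction to the single condition $x\in N_cN_{bc}$ requires the structure theory of Albert forms of biquaternion algebras. I expect these two explicit computations to be the only substantial difficulties, the rest being formal consequences of the results of \Cref{u5-bar-section}.
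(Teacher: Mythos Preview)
Your proposal is correct and follows essentially the same approach as the paper: part (a) is reduced via \Cref{u5-e} to the computation $e\equiv r$ (the paper's \Cref{lambda-formula}, carried out with the specific choices $\epsilon=\alpha+\gamma$, $\nu=\beta+\delta$ and $\omega=(1+\sqrt b+\sqrt c)/(v_2u_2)$), and part (b) is reduced via \Cref{u5-cor} to the two conditions on $x$, handled exactly as you describe by \Cref{albert-cor} (Albert forms) and \Cref{alpha-cup-c}. The only point to watch is that the paper normalizes $\alpha,\delta$ by the factor $l=v_1+u_1$ so that the coset coming out of the second equation is literally $tN_cN_{ac}$; with your normalization you would obtain a coset differing from this by a square, which is harmless.
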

		
		\begin{prop}\label{q-t-s}
			Suppose that $a=d$. Then
			\[r\in N_aN_{ab}N_{ac}\iff s\in N_bN_{ab}N_{bc} \iff t\in N_cN_{ac}N_{bc}.\]
		\end{prop}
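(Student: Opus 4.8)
The plan is to deduce all the equivalences from a single Brauer-group identity
\[(r,a)+(s,b)+(t,c)=0 \quad\text{in }\on{Br}(F),\]
together with the structure of the norm subgroups. First I would record a symmetry that cuts down the work: the substitution $b\leftrightarrow c$, $v_i\leftrightarrow u_i$ preserves the defining relations $v_1^2-bv_2^2=a$, $u_1^2-cu_2^2=a$ and $b+c=1$, fixes $a$ and $r$, and interchanges $s$ and $t$ while carrying $N_bN_{ab}N_{bc}$ to $N_cN_{ac}N_{bc}$. Hence it suffices to prove the single equivalence $r\in N_aN_{ab}N_{ac}\iff s\in N_bN_{ab}N_{bc}$; applying the symmetry then yields $r\in N_aN_{ab}N_{ac}\iff t\in N_cN_{ac}N_{bc}$, and the two together give the full chain.

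The first ingredient is the identity itself, which I would prove by a direct symbol computation. Expanding $(r,a)$, $(s,b)$, $(t,c)$ by bilinearity into symbols in the factors $2$, $v_1\pm v_2$, $u_1\pm u_2$, $v_1+u_1$, $v_2$, $u_2$, I would repeatedly feed in the two norm relations, written in Steinberg form as $\tfrac{v_1^2}{a}+\tfrac{-bv_2^2}{a}=1$ and $\tfrac{u_1^2}{a}+\tfrac{-cu_2^2}{a}=1$, together with $b+c=1$; each such relation kills a symbol via $(x,1-x)=0$, and the remaining terms cancel in pairs. This step is routine but lengthy, and is the computational backbone of the argument.

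For the formal deduction I would first translate each membership into a symbol condition. By \Cref{cup-norm} one has $z\in N_u\iff (u,z)=0$, and a short argument absorbing an $N_a$-factor gives the clean equivalence
\[r\in N_aN_{ab}N_{ac}\iff (a,r)\in (a,N_{ab})+(a,N_{ac}),\]
where $(a,N_{ab})\coloneqq\set{(a,w):w\in N_{ab}}$; likewise $s\in N_bN_{ab}N_{bc}\iff (b,s)\in (b,N_{ab})+(b,N_{bc})$. Since $w\in N_{ab}$ forces $(a,w)=(b,w)$, the group $(a,N_{ab})=(b,N_{ab})$ depends only on the pair $\set{a,b}$. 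Substituting the identity in the form $(b,s)=(a,r)+(c,t)$ and using $(b,c)=0$ (which holds because $b+c=1$), I would move the symbols between the slots $a$, $b$, $c$, invoking \Cref{biquadratic-triple-norm} to re-express the relevant elements of the biquadratic algebras through their quadratic subfields.

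The main obstacle is precisely this last transfer. The coarse invariant, namely the class of $(a,r)$ in $\on{Br}(F_b/F)+\on{Br}(F_c/F)$, is automatically controlled by the identity and therefore carries no information; the real content lies in the finer subgroups $(a,N_{ab})$ and $(a,N_{ac})$, whose elements are symbols that simultaneously live in two slots. The delicate point is to show that, after rewriting $(b,s)=(a,r)+(c,t)$, the extra symbol produced actually lies in $(b,N_{bc})$ rather than merely in $\on{Br}(F_c/F)$: this is where $b+c=1$ is indispensable, since $(b,c)=0$ is exactly what permits switching the splitting field from $F_c$ to $F_b$. Establishing this re-slotting carefully, and verifying that it is symmetric enough to run in both directions, is the crux of the proof.
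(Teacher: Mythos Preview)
Your overall plan coincides with the paper's: first establish $(r,a)+(s,b)+(t,c)=0$ in $\on{Br}(F)$ (this is \Cref{sc+tb=pa}), then deduce the equivalences formally (this is \Cref{formal}). For the identity the paper does not grind through Steinberg relations; it instead exhibits explicit rational points on conics to show that each of $(r,a)$, $(s,b)$, $(t,c)$ equals the corresponding symbol against $v_2^2+u_2^2$ (when this is nonzero; the complementary case is immediate), then uses $rst=2v_2u_2\cdot(\text{square})$ and one further conic point to kill $(2v_2u_2,v_2^2+u_2^2)$. Your symbolic approach may also work, but you give no details, and the paper's route is short and concrete.

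The real gap is in the formal step. You describe the re-slotting as ``the crux'' and ``delicate'', and assert that $(b,c)=0$ is indispensable there. Both claims are mistaken: the paper's \Cref{formal} applies to \emph{any} $a',b',c'\in F^\times$ with $(a',a)+(b',b)+(c',c)=0$, with no hypothesis whatsoever on $(a,b)$, $(b,c)$, or $(a,c)$. The tool you are missing is not \Cref{biquadratic-triple-norm} but \Cref{chain-lemma} (the Common Slot Theorem). With it the argument is three lines: if $c'=n_cn_{ac}n_{bc}$ then $(c',c)=(n_{ac},c)+(n_{bc},c)=(n_{ac},a)+(n_{bc},b)$ since $(n_{ac},ac)=(n_{bc},bc)=0$; substituting into the identity gives $(a'n_{ac},a)=(b'n_{bc},b)$, and \Cref{chain-lemma} immediately yields $a'n_{ac}\in N_aN_{ab}$ and $b'n_{bc}\in N_bN_{ab}$. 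So the ``obstacle'' you flag dissolves once the right lemma is invoked; your symmetry reduction, while correct, is then also unnecessary, since the formal statement is already symmetric under permutations of $(a,a'),(b,b'),(c,c')$.
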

	
		\subsection{Proof of Proposition \ref{lambda-criterion}(a)}
		
		We maintain the notations and assumptions of the beginning of \Cref{massey-deg-section}. Since $(a,b)=(b,c)=(c,d)=0$ in $\on{Br}(F)$, by \Cref{massey-triple} the Massey products $\ang{a,b,c}$ and $\ang{b,c,d}$ vanish. Therefore, by \Cref{uu4}, there exist $\epsilon \in F_{a,c}^\times$ and $\nu\in F_{b,d}^\times$ such that $K_1=(F_{a,b,c})_{N_c(\epsilon),N_a(\epsilon),\epsilon}$ and $K_2=(F_{b,c,d})_{N_d(\nu),N_b(\nu),\nu}$ are Galois $U_4$-algebras such that $K_1^{Q_4}=F_{a,b,c}$ and $K_2^{Q_4}=F_{b,c,d}$. By \Cref{u5-e}(a), there exist $e \in F^\times$ and $\omega\in F_{b,c}^{\times}$ such that 
		\[N_a(\epsilon) N_d(\nu)=e\omega^2,\qquad (\sigma_b-1)(\sigma_c-1)\omega=-1.\]

		\begin{prop}\label{lambda-formula}
			We have $e = r$ in $F^\times/N_aN_{ac}N_dN_{bd}$. In particular, $\ang{a,b,c,d}$ is defined if and only if $r\in N_aN_{ac}N_dN_{bd}$.
		\end{prop}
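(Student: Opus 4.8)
The plan is to exploit the fact, established in \Cref{u5-e}(b), that the class of $e$ in $F^\times/N_aN_{ac}N_dN_{bd}$ does not depend on the choice of $\epsilon$ and $\nu$. This frees us to compute $e$ using a single convenient pair $(\epsilon,\nu)$ built directly out of the parameters $v_1,v_2,u_1,u_2$, and then to read off the answer. Concretely, I would take
\[\epsilon\coloneqq (v_1+v_2)+v_2\sqrt{c}+\sqrt{a}\in F_{a,c}^\times,\qquad \nu\coloneqq (u_1+u_2)+u_2\sqrt{b}+\sqrt{d}\in F_{b,d}^\times.\]
These are designed so that the norm $N_a(\epsilon)\in F_c$ becomes a scalar multiple of $1+\sqrt{c}$: writing $\epsilon=x+\sqrt{a}$ with $x=(v_1+v_2)+v_2\sqrt{c}\in F_c$, so that $N_a(\epsilon)=x^2-a$, and using $a=v_1^2-bv_2^2$ together with $b+c=1$, a direct expansion gives
\[N_a(\epsilon)=2(v_1+v_2)v_2\,(1+\sqrt{c}),\qquad N_d(\nu)=2(u_1+u_2)u_2\,(1+\sqrt{b}),\]
the second identity being the symmetric computation with $(a,b,v_i)$ replaced by $(d,c,u_i)$. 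Taking $N_c$ (resp. $N_b$) of these and using $N_c(1+\sqrt{c})=1-c=b$ (resp. $N_b(1+\sqrt{b})=1-b=c$) confirms $N_{a,c}(\epsilon)=b$ and $N_{b,d}(\nu)=c$ in $F^\times/F^{\times 2}$, so that $\epsilon$ and $\nu$ are legitimate choices.

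The heart of the argument is then the identity
\[(1+\sqrt{b})(1+\sqrt{c})=\tfrac12\,(1+\sqrt{b}+\sqrt{c})^2,\]
which holds precisely because $b+c=1$ (expand the right-hand side and use $b+c=1$). Multiplying the two norm formulas and substituting this identity yields
\[N_a(\epsilon)\,N_d(\nu)=4(v_1+v_2)(u_1+u_2)v_2u_2\,(1+\sqrt{b})(1+\sqrt{c})=r\,\omega^2,\qquad \omega\coloneqq 1+\sqrt{b}+\sqrt{c}\in F_{b,c}^\times.\]
Thus the scalar produced by \Cref{u5-e}(a) for this pair is exactly $e=r$. It remains to check that $\omega$ satisfies the normalization $(\sigma_b-1)(\sigma_c-1)\omega=-1$: applying $\sigma_c$ and then $\sigma_b$ to $\omega$ and simplifying gives $(\sigma_b-1)(\sigma_c-1)\omega=(-2\sqrt{bc})/(2\sqrt{bc})=-1$, again using $b+c=1$. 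Hence $(\epsilon,\nu,e,\omega)$ is a valid instance of \Cref{u5-e}(a) with $e=r$.

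Since \Cref{u5-e}(b) guarantees that any other admissible $\epsilon,\nu$ produce a value of $e$ lying in the same coset, we conclude $e=r$ in $F^\times/N_aN_{ac}N_dN_{bd}$. The final assertion is then immediate from \Cref{u5-e}(c): the Massey product $\ang{a,b,c,d}$ is defined if and only if $e\in N_aN_{ac}N_dN_{bd}$, which by the equality just proved is equivalent to $r\in N_aN_{ac}N_dN_{bd}$.

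The only genuinely delicate point is the choice of $\epsilon$ and $\nu$: one has to arrange that $N_a(\epsilon)$ and $N_d(\nu)$ are scalar multiples of $1+\sqrt{c}$ and $1+\sqrt{b}$ respectively, since it is only for such representatives that the relation $b+c=1$ can be leveraged through the quadratic identity above. I would find these representatives by solving $x^2-a=g(1+\sqrt{c})$ for $x\in F_c$ and $g\in F^\times$, where the solution $x=(v_1+v_2)+v_2\sqrt{c}$, $g=2(v_1+v_2)v_2$ emerges from matching the equation $(x_0-x_1)^2-bx_1^2=a$ against the defining relation $a=v_1^2-bv_2^2$. Everything else is a routine expansion, and the genericity hypotheses in \eqref{vi-ui-def} ensure that $\epsilon$ and $\nu$ are units.
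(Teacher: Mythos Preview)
Your proof is correct and essentially identical to the paper's: your $\epsilon$, $\nu$, and $\omega$ are precisely $v_2$, $u_2$, and $v_2u_2$ times the paper's choices, and both arguments hinge on the same identity $(1+\sqrt{b}+\sqrt{c})^2=2(1+\sqrt{b})(1+\sqrt{c})$. The only cosmetic difference is that you expand $N_a(\epsilon)=x^2-a$ directly, whereas the paper writes $\epsilon=\alpha+\gamma$ with $\alpha\in F_a^\times$, $\gamma\in F_c^\times$ and invokes \Cref{comes-from-ac}(1) to obtain the same formula.
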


		\begin{proof}
			(a) Define
			\begin{align*}
				\alpha&\coloneqq \frac{v_1}{v_2}+\frac{1}{v_2}\sqrt{a}\in F_a^{\times}\\
				\beta&\coloneqq 1+\sqrt{b}\in F_b^{\times}\\
				\gamma&\coloneqq 1+\sqrt{c}\in F_c^{\times}\\
				\delta&\coloneqq \frac{u_1}{u_2}+\frac{1}{u_2}\sqrt{d}\in F_d^{\times}
			\end{align*}
			Note that $N_a(\alpha)=b=N_c(\gamma)$ and $N_b(\beta)=c=N_d(\delta)$. Set \[\epsilon\coloneqq \alpha+\gamma\in F_{a,c}^{\times},\qquad \nu\coloneqq \beta+\delta\in F_{b,d}^{\times}.\]
			By \Cref{comes-from-ac}(1), we have
			\begin{equation*}
				N_a(\epsilon)=\gamma x,\qquad N_d(\nu)=\beta y,
			\end{equation*}
			where
			\begin{align*}
				x\coloneqq \on{Tr}_a(\alpha)+\on{Tr}_c(\gamma)=2\left(\frac{v_1}{v_2}+1\right)\in F^\times,\\
				y\coloneqq \on{Tr}_b(\beta)+\on{Tr}_d(\delta)=2\left(1+\frac{u_1}{u_2}\right)\in F^\times.
			\end{align*}
			In particular
			\begin{equation*}
				N_{a,c}(\epsilon)=bx^2,\qquad N_{b,d}(\nu)=cy^2. 
			\end{equation*}
			  Define
			\[\omega\coloneqq \frac{1+\sqrt{b}+\sqrt{c}}{v_2u_2}\in F_{b,c}^{\times}.\]
			Note that $\omega\neq 0$ because $1$, $\sqrt{b}$ and $\sqrt{c}$ are linearly independent over $F$. Moreover
        \begin{align*}
            (\sigma_b-1)(\sigma_c-1)\omega&=\frac{(\sigma_b\sigma_c+1)\omega}{(\sigma_b+\sigma_c)\omega}\\
            &=\frac{(1+\sqrt{b}+\sqrt{c})(1-\sqrt{b}-\sqrt{c})}{(1-\sqrt{b}+\sqrt{c})(1+\sqrt{b}-\sqrt{c})}\\
            &=\frac{-2\sqrt{bc}}{2\sqrt{bc}}\\
            &=-1.
        \end{align*}
We have $(1+\sqrt{b}+\sqrt{c})^2=2(1+\sqrt{b})(1+\sqrt{c})$, hence
\begin{align*}
    \frac{N_a(\epsilon)N_d(\nu)}{\omega^2}&=\frac{xy(1+\sqrt{b})(1+\sqrt{c})v_2^2u_2^2}{(1+\sqrt{b}+\sqrt{c})^2}\\
    &=\frac{xyv_2^2u_2^2}{2}\\
    &=2\left(\frac{v_1}{v_2}+1\right)\left(1+\frac{u_1}{u_2}\right)v_2^2u_2^2\\
    &=r.
\end{align*}
Thus $N_a(\epsilon)N_d(\nu)=r\omega^2$. We conclude from \Cref{u5-e}(b) that $e=r$ modulo $N_aN_{ac}N_dN_{bd}$.
		\end{proof}
		
		\begin{proof}[Proof of \Cref{lambda-criterion}(a)]
			Since $a=d$, we have $N_aN_{ac}N_dN_{bd}=N_aN_{ab}N_{ac}$. The conclusion follows from \Cref{lambda-formula} and \Cref{u5-e}. 
		\end{proof}
		
		\subsection{Proof of Proposition \ref{lambda-criterion}(b)}

		The next proposition is the key step for our proof of \Cref{lambda-criterion}(b). Its proof is the only place where we need to use quadratic form theory in this paper.	
		
		\begin{prop}\label{albert}
			Let $a\in F^\times$ and $\pi,\mu\in F_a^\times$ be such that $N_a(\pi,\mu)=0$ in $\on{Br}(F)$. Then there exists $z\in F^\times$ such that $(\pi,\mu z)=0$ in $\on{Br}(F_a)$.
		\end{prop}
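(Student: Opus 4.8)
The plan is to reduce the statement to the isotropy of an explicit quadratic form over $F$, and then to recognize that form as a subform of an Albert form of the corestriction $N_a(\pi,\mu)$. Throughout, write $L:=F_a$ with its nontrivial automorphism $\sigma_a$. If $\pi\in L^{\times 2}$ then $(\pi,\mu z)=0$ for every $z$ and we may take $z=1$; so I would assume $\pi\notin L^{\times 2}$, so that the binary form $\langle 1,-\pi\rangle$ is anisotropic over $L$. By \Cref{cup-norm} applied over $L$, we have $(\pi,\mu z)=0$ in $\on{Br}(L)$ if and only if $\mu z$ is represented by $\langle 1,-\pi\rangle$ over $L$. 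Hence it suffices to produce $U,V\in L$ with $U^2-\pi V^2\in\mu F^\times$; equivalently, setting $\Phi:=\mu^{-1}\langle 1,-\pi\rangle$ over $L$, it suffices to find a vector on which $\Phi$ takes a value in $F^\times$.

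To detect such a vector I would introduce the Scharlau transfer. Let $s\colon L\to F$ be the $F$-linear functional with $s(1)=0$ and $s(\sqrt a)=1$, and let $s_*\colon W(L)\to W(F)$ be the associated transfer. For $U,V\in L$ the element $s(\Phi(U,V))$ is exactly the value at $(U,V)\in L^2=F^4$ of the $4$-dimensional form $s_*\Phi$ over $F$, and $s(\Phi(U,V))=0$ says precisely that $\Phi(U,V)\in F$. Since $\Phi$ is anisotropic over $L$, any nontrivial zero of $s_*\Phi$ gives a pair $(U,V)\neq 0$ with $\Phi(U,V)\in F$ and $\Phi(U,V)\neq 0$, whence $z:=\Phi(U,V)\in F^\times$ does the job. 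Thus the entire problem reduces to showing that the $4$-dimensional form $s_*\Phi$ is isotropic over $F$.

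Now I would bring in the corestriction. Scaling entries by squares gives $\Phi\cong\langle\mu,-\pi\mu\rangle$, and the pure part $q_0:=\langle -\pi,-\mu,\pi\mu\rangle$ of the quaternion $L$-algebra $(\pi,\mu)$ satisfies $-q_0\cong\langle\mu,-\pi\mu\rangle\perp\langle\pi\rangle$. Applying $s_*$, the form $s_*\Phi\cong s_*\langle\mu,-\pi\mu\rangle$ is an orthogonal summand of the $6$-dimensional form $s_*(-q_0)=-s_*(q_0)$, the complementary summand being the binary $s_*\langle\pi\rangle$. The key input is the identification of $s_*(q_0)$ with an Albert form of the biquaternion algebra $N_a(\pi,\mu)=\on{cores}_{L/F}(\pi,\mu)$, via the standard dictionary between Scharlau transfers and Albert forms. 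Granting this, the hypothesis $N_a(\pi,\mu)=0$ says the biquaternion algebra is split, so its Albert form $s_*(q_0)$ is hyperbolic, of Witt index $3$. A hyperbolic $6$-dimensional form has every subform of dimension at least $4$ isotropic, so the $4$-dimensional form $s_*\Phi$ is isotropic, which completes the reduction.

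The crux, and the step I expect to be the main obstacle, is this transfer--corestriction dictionary: that $s_*(q_0)$ is (similar to) an Albert form of $\on{cores}_{L/F}(\pi,\mu)$, or equivalently that $N_a(\pi,\mu)=0$ forces $s_*(q_0)$ to be hyperbolic rather than merely isotropic. I would either cite this from the theory of Scharlau transfers and biquaternion algebras, or verify it directly by computing the invariants of $s_*(q_0)$: the transfer of $q_0$ is a $6$-dimensional form of trivial discriminant lying in $I^2F$, whose Clifford invariant equals $\on{cores}_{L/F}$ of that of $q_0$, namely $N_a(\pi,\mu)$. It is worth emphasizing that the full strength of the hypothesis is genuinely needed: mere isotropy of $s_*(q_0)$ would only bound the index of the corestriction and would not suffice to force the $4$-dimensional subform $s_*\Phi$ to be isotropic, whereas hyperbolicity (Witt index $3$) does.
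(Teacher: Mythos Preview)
Your proposal is correct and follows essentially the same approach as the paper: both use the Scharlau transfer $s_*$ along $F_a/F$ with $s(1)=0$, identify $s_*$ of the pure-quaternion form of $(\pi,\mu)$ with an Albert form of the corestriction $N_a(\pi,\mu)$ (the paper cites \cite[Propositions (16.23), (16.27)]{knus1998book} for this), deduce hyperbolicity of the $6$-dimensional transfer from the splitting hypothesis, and conclude that the $4$-dimensional subform $s_*\langle\mu,-\pi\mu\rangle$ is isotropic. The only cosmetic differences are that you dispose of the case $\pi\in F_a^{\times 2}$ at the outset rather than at the end, and your $q_0$ is the negative of the paper's squaring form $q$.
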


		\begin{proof}
			We use the theory of Albert forms attached to biquaternion algebras; see \cite[\S 16 A]{knus1998book}. As explained in \cite[Example (16.4)]{knus1998book}, given a biquaternion $F$-algebra $A\coloneqq (a_1,b_1)\otimes (a_2,b_2)$, the quadratic form $\ang{a_1,b_1,-a_1b_1,-a_2,-b_2,a_2b_2}$ is an Albert form of $A$. Given two presentations of $A$ as a tensor product of two quaternion algebras, the corresponding Albert forms are similar to each other; see \cite[Proposition (16.3)]{knus1998book}. 
			
			Let $K/F$ be an \'etale algebra of degree $2$, let $s\colon K\to F$ be a nonzero linear map such that $s(1)=0$, let $Q$ be a quaternion algebra over $K$, let $Q^0\subset Q$ be the subspace of pure quaternions, let $q:Q^0\to K$ be the quadratic form given by squaring, and let $s_*(q)$ be the transfer of $q$; see \cite[Chapter VII, \S 1]{lam2005introduction}. Then it follows from \cite[Propositions (16.23) and (16.27)]{knus1998book} that $s_*(q)$ is similar to an Albert form over $F$ of the biquaternion $F$-algebra given by the corestriction $N_{K/F}(Q)$; see the proof of \cite[Corollary (16.28)]{knus1998book}. Thus, by Albert's theorem \cite[Theorem 16.5]{knus1998book}:
			\begin{equation}\label{albert-theorem}
				\text{If $N_{K/F}(Q)$ is split then $s_*(q)$ is hyperbolic.}
			\end{equation}
			
			Now let $K= F_a$, let $s\colon F_a \to F$ be a non-zero $F$-linear map such that $s(1) = 0$, and let $Q= (\pi,\mu)$. Then $q=\ang{\pi,\mu,-\pi\mu}$. By assumption, $N_a(\pi, \mu)$ is split, hence by (\ref{albert-theorem}) the $6$-dimensional quadratic form $s_*(q)$ is hyperbolic. Since $4>6/2$, the $4$-dimensional subform $s_*\ang{\mu,-\pi\mu}$ of $s_*(q)$ is  isotropic. We deduce that the form $\ang{\mu,-\pi\mu}$ over $F_a$ represents an element of $F$. If $\ang{\mu,-\pi\mu}$ is isotropic, then $\pi\in F_a^{\times 2}$, hence $(\pi,\mu)=0$ in $\on{Br}(F_a)$ and we may take $z=1$. Otherwise $\ang{\mu,-\pi\mu}$ over $F_a$ represents an element $z\in F^\times$, then $\mu z$ is represented by $\ang{1, -\pi}$. By \Cref{cup-norm}, this implies that $(\pi,\mu z)=0$ in $\on{Br}(F_a)$ and completes the proof.
		\end{proof}

We maintain the notations and assumptions of the beginning of \Cref{massey-deg-section}. Suppose further that $a=d$. Let 
		\begin{equation}\label{define-l}l\coloneqq v_1+u_1,\qquad \alpha\coloneqq lv_1+l\sqrt{a}\in F_a^{\times},\qquad \delta\coloneqq lu_1+l\sqrt{a}\in F_a^{\times}.\end{equation}
		By (\ref{vi-ui-def}), we have
		\[N_{a}(\alpha)=b(lv_2)^2,\qquad N_{a}(\delta)=c(lu_2)^2.\]

		\begin{cor}\label{albert-cor}
			(a) If $N_a(\alpha x,\delta y)=0$ in $\on{Br}(F)$ for some $x,y\in F^{\times}$, then $x\in N_cN_{bc}$.
			
			(b) For every $x\in N_cN_{bc}$, there exists $y\in F^\times$ such that $(\alpha x, \delta y)=0$ in $\on{Br}(F_a)$.
		\end{cor}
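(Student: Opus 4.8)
The plan is to deduce both parts from a single identity for the corestriction $N_a(\alpha x,\delta y)\in\on{Br}(F)$, and then to apply the common slot lemma for (a) and \Cref{albert} for (b). Recall that $l=v_1+u_1$, that $\alpha=l(v_1+\sqrt{a})$ and $\delta=l(u_1+\sqrt{a})$ with $N_a(\alpha)=b(lv_2)^2$ and $N_a(\delta)=c(lu_2)^2$, and that $(b,c)=0$ because $b+c=1$.

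First I would prove that, for all $x,y\in F^\times$,
\[N_a(\alpha x,\delta y)=(c,x)+(b,y)\qquad\text{in }\on{Br}(F).\]
The decisive observation is that
\[\alpha+\sigma_a(\delta)=l(v_1+\sqrt a)+l(u_1-\sqrt a)=l(v_1+u_1)=l^2\]
is a square; dividing by $l^2$ and using the Steinberg relation gives $(\alpha,\sigma_a(\delta))=0$ in $\on{Br}(F_a)$, whence $(\alpha,\delta)=(\alpha,\delta\,\sigma_a(\delta))=(\alpha,N_a(\delta))=(\alpha,c)$. Expanding $(\alpha x,\delta y)$ by bilinearity and applying the projection formula for $N_a$ (together with $N_a\circ\mathrm{res}_a=0$ on $\on{Br}(F)[2]$) then yields $N_a(\alpha,c)=(b,c)=0$, $N_a(\alpha,y)=(b,y)$, $N_a(x,\delta)=(c,x)$ and $N_a(x,y)=0$, which gives the identity.

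For (a), the hypothesis and the identity give $(c,x)=(b,y)$ as quaternion algebras. By the common slot lemma there is $w\in F^\times$ with $(c,x)=(c,w)=(b,w)$; then $x/w\in N_c$ by \Cref{cup-norm}, while $(c,w)=(b,w)$ forces $(bc,w)=0$ and hence $w\in N_{bc}$, so $x\in N_cN_{bc}$. For (b), write $x=N_c(\xi)N_{bc}(\zeta)$ and set $y_0\coloneqq N_{bc}(\zeta)$. Then $(c,x)=(c,N_{bc}(\zeta))=(b,y_0)$ because $(bc,y_0)=0$, so the identity gives $N_a(\alpha x,\delta y_0)=0$. Applying \Cref{albert} to $\pi=\alpha x$ and $\mu=\delta y_0$ produces $z\in F^\times$ with $(\alpha x,\delta y_0 z)=0$ in $\on{Br}(F_a)$, and $y\coloneqq y_0 z$ works.

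The main obstacle is spotting and using the square identity $\alpha+\sigma_a(\delta)=l^2$—this is exactly why the auxiliary scalar $l=v_1+u_1$ was introduced—since it collapses the a priori complicated class $(\alpha,\delta)$ to the single symbol $(\alpha,c)$ and thereby makes the corestriction computable. The only other essential point is the passage in (b) from the vanishing of the corestriction to the vanishing of $(\alpha x,\delta y)$ over $F_a$ itself, which is precisely the role of the Albert-form result \Cref{albert}.
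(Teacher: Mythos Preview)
Your proof is correct and follows essentially the same approach as the paper: the identity $N_a(\alpha x,\delta y)=(c,x)+(b,y)$ via the observation $\alpha+\sigma_a(\delta)=l^2$, then the common slot argument for (a) (which the paper packages as \Cref{chain-lemma}) and \Cref{albert} with $y_0=n_{bc}$ for (b). The only cosmetic difference is that you inline the common-slot step in (a) rather than citing the lemma.
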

		
		\begin{proof}
    We first show that for all $x,y\in F^\times$:
			\begin{equation}\label{albert-eq1}N_a(\alpha x,\delta y)=(b,y)+(x,c)\qquad \text{in $\on{Br}(F)$.}\end{equation} Indeed, since $\alpha+\sigma_a(\delta)=l^2$, by \Cref{cup-norm} we have $(\alpha,\sigma_a(\delta))=0$ in $\on{Br}(F_a)$, hence 
			\[(\alpha,\delta)=(\alpha,\delta)+(\alpha,\sigma_a(\delta))=(\alpha,N_a(\delta))=(\alpha,c)\qquad \text{in $\on{Br}(F_a)$.}\]
			It follows that
			\[(\alpha x,\delta y)=(\alpha,\delta)+(\alpha,y)+(x,\delta)+(x,y)=(\alpha, c)+(\alpha,y)+(x,\delta)+(x,y)\quad \text{in $\on{Br}(F_a)$.}\]
			Now (\ref{albert-eq1}) follows by applying $N_a$ and using that $N_a(\alpha)=b$, $N_a(\delta)=c$, $(b,c)=0$ and $N_a(x,y)=2(x,y)=0$.
  
			(a) By (\ref{albert-eq1}) we have $(b,y)+(x,c)=0$. The conclusion follows from \Cref{chain-lemma}.
			
			(b) Write $x=n_cn_{bc}$, where $n_c\in N_c$  and $n_{bc}\in N_{bc}$. Then by (\ref{albert-eq1}) we have $N_a(\alpha x, \delta n_{bc})=0$.   By \Cref{albert} applied to $\pi=\alpha x$ and $\mu=\delta n_{bc}$, there exists $z\in F^\times$ such that $(\alpha x, \delta n_{bc} z)=0$ in $\on{Br}(F_a)$. Letting $y\coloneqq n_{bc} z$, we obtain $(\alpha x, \delta y)=0$ in $\on{Br}(F_a)$, as desired.
		\end{proof}
		
		\begin{rmk}
			We give a proof of \Cref{albert-cor}(b) that minimizes the use of quadratic form theory. Let $x\in F^\times$ and consider the quadratic form $q_x\coloneqq \ang{\delta,-\alpha x \delta}$ over $F_a$. We first show that:
   \begin{equation}\label{albert-cor-eq}
       \text{$(\alpha x,\delta y)=0$ for some $y\in F^\times$ $\iff$ $q_x$ represents an element of $F^\times$.}
   \end{equation}
   Indeed, let $y\in F^{\times}$. Then $(\alpha x,\delta y)=0$ in $\on{Br}(F_a)$ if and only the quadratic form $\ang{1,-\alpha x}$ over $F_a$ represents $\delta y$. This is in turn equivalent to $y$ being represented by $q_x$. This proves (\ref{albert-cor-eq}).
   
   Now let $s\colon F_a\to F$ be the $F$-linear map such that $s(1)=0$ and $s(\sqrt{a})=1$. The form $q_x$ over $F_a$ represents a value in $F$ if and only if the form $s_*(q_x)$ over $F$ is isotropic. 

    Suppose first that $q_x$ is isotropic. Then $\alpha x \in F_a^{\times 2}$, hence $(\alpha x, \delta)=0$ in $\on{Br}(F_a)$, that is, the conclusion of \Cref{albert-cor}(b) is true for $y=1$. 
   
   Suppose now that $q_x$ is anisotropic. A simple computation shows that
			\begin{equation}\label{s-q-x}s_*(q_x)=\ang{l,-lc,-lx,lbcx}.\end{equation}
   Since $x\in N_cN_{bc}$, there exist $w_1,w_2,w_3,w_4\in F$ such that \[w_1^2-cw_2^2=x(w_3^2-bcw_4^2)\neq 0.\] Multiplying both sides by $l$ and using (\ref{s-q-x}), we deduce that $s_*(q_x)$ is isotropic. It follows that $q_x$ represents a value in $F$. Since $q_x$ is anisotropic, it represents a value in $F^{\times}$, and so by (\ref{albert-cor-eq}) there exists $y\in F^\times$ such that $(\alpha x, \delta y)=0$ in $\on{Br}(F_a)$. This implies \Cref{albert-cor}(b) when $q_x$ is anisotropic, thus completing the proof.
		\end{rmk}

		\begin{lemma}\label{alpha-cup-c}
			Let $x\in F^\times$. Then $(\alpha x,c)=0$ in $\on{Br}(F_a)$ if and only if $x\in tN_cN_{ac}$.
		\end{lemma}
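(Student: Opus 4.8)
The plan is to reduce the statement to a single Brauer-class identity over $F_a$ together with a norm-group description. By bilinearity of the quaternion symbol, $(\alpha x,c)=(\alpha,c)+(x,c)$ in $\on{Br}(F_a)$, so it suffices to establish two facts. First, that $(\alpha,c)=(t,c)$ in $\on{Br}(F_a)$, so that $(\alpha x,c)=(tx,c)$. Second, that for $w\in F^\times$ one has $(w,c)=0$ in $\on{Br}(F_a)$ if and only if $w\in N_cN_{ac}$. Granting these, $(\alpha x,c)=0$ becomes equivalent to $tx\in N_cN_{ac}$, and since $t^2\in F^{\times 2}\subseteq N_c$ this is the same as $x\in tN_cN_{ac}$, which is the claim.

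For the first fact, I would use that $\alpha=l(v_1+\sqrt a)$ and $t=2l(v_1+v_2)$ with $l=v_1+u_1\in F^\times$, so that $\alpha t=l^2\cdot 2(v_1+v_2)(v_1+\sqrt a)$ and hence
\[
(\alpha,c)+(t,c)=(\alpha t,c)=\bigl(2(v_1+v_2)(v_1+\sqrt a),\,c\bigr)\quad\text{in }\on{Br}(F_a).
\]
I would prove this last class vanishes by exhibiting $2(v_1+v_2)(v_1+\sqrt a)$ as a norm from $F_{a,c}/F_a$ and invoking \Cref{cup-norm}. The key computation is the identity
\[
\bigl((v_1+v_2)+\sqrt a\bigr)^2-c\,v_2^2=2(v_1+v_2)(v_1+\sqrt a)\quad\text{in }F_a,
\]
whose left-hand side is exactly $N_{F_{a,c}/F_a}\bigl((v_1+v_2)+\sqrt a+v_2\sqrt c\bigr)$. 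I would verify the identity by expanding and using $v_1^2-a=bv_2^2$: the difference of the two sides collapses to $v_2^2(1-b-c)$, which vanishes precisely because $b+c=1$. Thus $2(v_1+v_2)(v_1+\sqrt a)\in N_{F_{a,c}/F_a}$, so its symbol with $c$ is trivial and $(\alpha,c)=(t,c)$.

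For the second fact, the inclusion $N_cN_{ac}\subseteq\{w\in F^\times:(w,c)_{F_a}=0\}$ is immediate: if $w=n_cn_{ac}$ with $n_c\in N_c$ and $n_{ac}\in N_{ac}$, then $(n_c,c)_F=0$ by \Cref{cup-norm}, while $(n_{ac},ac)_F=0$ gives $(n_{ac},c)_{F_a}=(n_{ac},a)_{F_a}=0$ since $a$ is a square in $F_a$. For the reverse inclusion, $(w,c)_{F_a}=0$ means that $(w,c)_F$ lies in the kernel of restriction along $F_a=F(\sqrt a)$, hence $(w,c)=(a,z)$ for some $z\in F^\times$; rewriting this as $(a,z)+(w,c)=0$ in $\on{Br}(F)$ and applying \Cref{chain-lemma} exactly as in the proof of \Cref{albert-cor}(a) yields $w\in N_cN_{ac}$.

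I expect the main obstacle to be producing the norm witness in the first fact: the whole argument hinges on finding an element of $F_{a,c}$ whose norm down to $F_a$ is $2(v_1+v_2)(v_1+\sqrt a)$, and the reason such an element exists is the special feature $b+c=1$, which turns $bv_2^2+cv_2^2$ into the square $v_2^2$ and makes the constant term of the norm match. Once that identity is found, the remainder is bilinearity together with the two inclusions, both of which are routine consequences of \Cref{cup-norm} and \Cref{chain-lemma}.
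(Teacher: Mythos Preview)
Your proof is correct and follows essentially the same route as the paper. The only difference is presentational: where the paper invokes \Cref{comes-from-ac}(3) with $\rho=lv_2(1+\sqrt{c})$ and $\mu=\alpha$ to obtain $(\alpha,c)=(t,c)$ in $\on{Br}(F_a)$, you inline that lemma by directly exhibiting the norm witness $(v_1+v_2)+\sqrt{a}+v_2\sqrt{c}\in F_{a,c}$ for $\alpha t/l^2$; this is precisely the element $\rho+\mu$ (divided by $l$) appearing in the proof of \Cref{comes-from-ac}(1), and your identity $((v_1+v_2)+\sqrt{a})^2-cv_2^2=2(v_1+v_2)(v_1+\sqrt{a})$ is the specialization of that lemma's computation. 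The second half of your argument, reducing $(tx,c)_{F_a}=0$ to $tx\in N_cN_{ac}$ via the kernel of restriction and \Cref{chain-lemma}, is identical to the paper's.
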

		
		\begin{proof}
			We have \[N_a(\alpha)=b(lv_2)^2=(1-c)(lv_2)^2=N_c(lv_2(1+\sqrt{c}))\] and \[\on{Tr}_a(\alpha)+\on{Tr}_c(lv_2(1+\sqrt{c}))=2lv_1+2lv_2=t.\] Now \Cref{comes-from-ac}(3) implies
			\[(\alpha,c)=(t,c)\qquad \text{in $\on{Br}(F_a)$},\]
			hence, adding $(x,c)$ to both sides,
			\[(\alpha x,c)=(tx,c)\qquad \text{in $\on{Br}(F_a)$}.\]
			Thus $(\alpha x,c)=0$ in $\on{Br}(F_a)$ if and only if $(tx,c)=0$ in $\on{Br}(F_a)$. By \cite[Chapter XIV, Proposition 2]{serre1979local}, this is in turn equivalent to the existence of $y\in F^\times$ such that $(tx,c)=(a,y)$ in $\on{Br}(F)$. By \Cref{chain-lemma}, this is equivalent to $tx\in N_cN_{ac}$, that is, $x\in tN_cN_{ac}$.
		\end{proof}
		
		\begin{proof}[Proof of \Cref{lambda-criterion}(b)]
   By \Cref{u5-cor}, the Massey product $\ang{a,b,c,a}$ vanishes over $F$ if and only if there exist $x,y\in F^\times$ such that \[(\alpha x,\delta y)=(\alpha x,c)=0 \qquad\text{in $\on{Br}(F_a)$.} \]
			By \Cref{albert-cor}(a) and \Cref{alpha-cup-c}, the two equations are satisfied if and only if $tN_cN_{ac}\cap N_cN_{bc}$ is non-empty, that is, $t\in N_cN_{ac}N_{bc}$.
		\end{proof}

		\subsection{Proof of Proposition \ref{q-t-s}}
		
		We maintain the notations and assumptions of the beginning of \Cref{massey-deg-section}.
		
		\begin{lemma}\label{sc+tb=pa}
			We have	$(r,a)+(s,b)+(t,c)=0$ in $\on{Br}(F)$. 
		\end{lemma}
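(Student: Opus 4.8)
The plan is to establish the identity by a direct computation with the quaternion symbol, using its bilinearity together with the relations that the defining equations impose. I work under the standing hypothesis $a=d$ of \Cref{q-t-s}; as I explain at the end, this hypothesis is genuinely needed. Under $a=d$ and $b+c=1$, the two equations $v_1^2-bv_2^2=a$ and $u_1^2-cu_2^2=a$ yield the three product relations
\[(v_1+v_2)(v_1-v_2)=a-cv_2^2,\qquad (u_1+u_2)(u_1-u_2)=a-bu_2^2,\qquad (v_1+u_1)(v_1-u_1)=bv_2^2-cu_2^2,\]
which I will use repeatedly. I will also use that $(a,b)=(b,c)=(c,a)=0$ in $\on{Br}(F)$ — the first because $a=v_1^2-bv_2^2$ is a norm from $F_b$, the second because $c=1-b$ (Steinberg), and the third because $d=u_1^2-cu_2^2$ is a norm from $F_c$ so that $(c,d)=0$, whence $(c,a)=0$ since $a=d$ — together with the elementary identities $(x,-x)=0$, $(x,1-x)=0$, and $(x,y)=(x+y,-xy)$ valid whenever $x+y\neq 0$.

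First I would factor $r,s,t$ into their linear factors $2,\ v_1+v_2,\ u_1+u_2,\ v_2,\ u_2,\ v_1+u_1$ and distribute the symbol. Collecting the resulting symbols by their second argument and using bilinearity collapses the left-hand side to
\[(r,a)+(s,b)+(t,c)=(2,abc)+(v_1+v_2,ac)+(u_1+u_2,ab)+(v_2u_2,a)+(v_1+u_1,bc).\]
None of these five terms is individually trivial (one checks this already on a numerical example), so the statement is a cancellation among the five terms rather than a term-by-term vanishing. I would then eliminate the elements $v_1+v_2$, $u_1+u_2$, $v_1+u_1$ from the first slots by feeding the corresponding product relation above into the representation principle for binary quadratic forms (if $m=p\xi^2+q\eta^2\neq 0$ then $(p,q)=(m,-pq)$), which trades each such symbol for symbols in $a,b,c$ and $-1$. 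The conjugate factors $v_1-v_2$, $u_1-u_2$, $v_1-u_1$ that this produces are handled via $(x,y)=(x+y,-xy)$ (which relates $(v_1+v_2,v_1-v_2)$ to $(2v_1,-(a-cv_2^2))$, and similarly for the others), after which the remaining symbols are annihilated by $(a,b)=(b,c)=(c,a)=0$ and the Steinberg relation attached to $b+c=1$, leaving $0$.

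The main obstacle is organizational: controlling the many cross terms generated when the product relations are applied, and in particular verifying that the contributions involving the conjugate factors $v_1-v_2$, $u_1-u_2$, $v_1-u_1$ all cancel in the end — this is the delicate point of the computation. Finally, the hypothesis $a=d$ cannot be dropped: the symmetry $(v_i,b)\leftrightarrow(u_i,c)$, $a\leftrightarrow d$ of the defining data fixes $r$ and interchanges $s$ and $t$, so it would transform the identity into $(r,d)+(t,c)+(s,b)=0$; subtracting this from the asserted identity would force $(r,a)=(r,d)$, which fails for general $a,d$. This confirms that the three product relations above, which rest on $a=d$, are exactly the input that makes the identity hold.
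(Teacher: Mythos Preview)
Your expansion of $(r,a)+(s,b)+(t,c)$ into $(2,abc)+(v_1+v_2,ac)+(u_1+u_2,ab)+(v_2u_2,a)+(v_1+u_1,bc)$ is correct, and your observation that the hypothesis $a=d$ is genuinely needed is right and matches the paper's own proof (which uses it silently, via $u_1^2-cu_2^2=v_1^2-bv_2^2$). However, the proposal stops short of being a proof: you identify the cancellation of the cross terms as ``the delicate point of the computation'' and do not carry it out. Moreover, the strategy you announce for handling the conjugate factors is not clearly connected to the symbols actually present after your reduction. Applying $(x,y)=(x+y,-xy)$ with $x=v_1+v_2$ and $y=v_1-v_2$ yields $(v_1+v_2,\,v_1-v_2)=(2v_1,\,-(a-cv_2^2))$, but what your step produces and must eliminate is $(v_1-v_2,\,ac)$; you do not explain how to get from one to the other. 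The conjugate factors also need not be nonzero under the standing hypotheses, so additional case distinctions would be required. The approach may be completable by brute force, but as written there is no argument, only an outline with its hardest step missing.

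The paper's proof follows a genuinely different and much cleaner idea. Rather than decomposing $r,s,t$ into linear factors, it discovers three explicit perfect-square identities
\[
(v_1+u_1+v_2)^2 = t + c(v_2^2+u_2^2),\qquad (v_1+u_1+u_2)^2 = s + b(v_2^2+u_2^2),
\]
\[
(v_2u_2+v_1u_2+v_2u_1)^2 = r + a(v_2^2+u_2^2),
\]
each of which (when $v_2^2+u_2^2\neq 0$) exhibits an $F$-point on the relevant conic and hence converts the corresponding symbol to one with the \emph{common} second slot $v_2^2+u_2^2$. The sum then collapses to $(rst,\,v_2^2+u_2^2)$; since $rst$ equals $2v_2u_2$ up to a square and $2v_2u_2 + (v_2^2+u_2^2) = (v_2+u_2)^2$, this symbol vanishes. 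The case $v_2^2+u_2^2=0$ falls out trivially from the same identities. The organizing principle---trade the three disparate second slots $a,b,c$ for a single common one---is exactly what your approach lacks, and is what makes the computation short.
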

		
		\begin{proof}
			We have
			\[u_1^2+v_2^2=(v_1^2-bv_2^2+cu_2^2)+v_2^2=v_1^2+c(v_2^2+u_2^2),\]
			hence
			\begin{align*}
				(v_1+u_1+v_2)^2&=v_1^2+(u_1^2+v_2^2)+2v_1u_1+2v_1v_2+2u_1v_2\\
				&=2v_1^2+c(v_2^2+u_2^2)+2v_1u_1+2v_1v_2+2u_1v_2\\
				&=t+c(v_2^2+u_2^2).
			\end{align*}
			In other terms, the conic of equation $tX^2+c(v_2^2+u_2^2)Y^2=Z^2$ (which is smooth if $v_2^2+u_2^2\neq 0$) has the $F$-point $(1:1:v_1+u_1+u_2)$. Thus by \Cref{cup-norm}
			\begin{equation}\label{t-cup-c}
				(t,c)=(t,v_2^2+u_2^2)\qquad \text{if $v_2^2+u_2^2\neq 0$},
			\end{equation}
			and $(t,c)=0$ if $v_2^2+u_2^2=0$. Similarly,
			\begin{equation}\label{s-cup-b}
				(s,b)=(s,v_2^2+u_2^2)\qquad \text{if $v_2^2+u_2^2\neq 0$},
			\end{equation}
			and $(s,b)=0$ if $v_2^2+u_2^2=0$. We also have
			\begin{align*}
    (v_2u_2+v_1u_2+v_2u_1)^2&=v_2^2u_2^2+v_1^2u_2^2+v_2^2u_1^2+ 2v_2u_2(v_1u_1+v_1u_2+v_2u_1)\\
				&=-v_2^2u_2^2+v_1^2u_2^2+v_2^2u_1^2+2v_2u_2(v_1u_1+v_1u_2+v_2u_1+v_2u_2)\\
				&=-v_2^2u_2^2+(a+bv_2^2)u_2^2+v_2^2(a+cu_2^2)+r\\
            &=a(v_2^2+u_2^2)+r.
			\end{align*}
			Now \Cref{cup-norm} implies
			\begin{equation}\label{p-cup-a}
				(r,a)=(r,v_2^2+u_2^2)\qquad \text{if $v_2^2+u_2^2\neq 0$},
			\end{equation}
			and $(r,a)=0$ if $v_2^2+u_2^2=0$. In particular, when $v_2^2+u_2^2=0$ we have $(r,a)=(s,b)=(t,c)=0$, which implies the conclusion in this case. Suppose now that $v_2^2+u_2^2\neq 0$. Note that
			\begin{equation}\label{note1}rst=2v_2u_2\cdot\left(\frac{rl}{v_2u_2}\right)^2.\end{equation} Finally, 
			\begin{equation}\label{note2} (2v_2u_2,v_2^2+u_2^2)=0\end{equation} since the smooth conic of equation $(2v_2u_2)X^2+(v_2^2+u_2^2)Y^2=Z^2$ has the $F$-point $(1:1:v_2+u_2)$. Putting 
			(\ref{t-cup-c})-(\ref{note2}) together, we conclude that
			\begin{align*}
				(r,a)+(s,b)+(t,c)&=(r,v_2^2+u_2^2)+(s,v_2^2+u_2^2)+(t,v_2^2+u_2^2)\\
                &=(rst, v_2^2+u_2^2)\\
				&=(2v_2u_2,v_2^2+u_2^2)\\
				&=0,
			\end{align*}
			which completes the proof.
		\end{proof}
		
		\begin{lemma}\label{formal}
			Let $a',b',c'\in F^\times$ be such that $(a',a)+(b',b)+(c',c)=0$ in $\on{Br}(F)$, then
			\begin{equation*}\label{a'b'c'}a'\in N_aN_{ab}N_{ac}\iff b'\in N_bN_{ab}N_{bc} \iff c'\in N_cN_{ac}N_{bc}.\end{equation*}
		\end{lemma}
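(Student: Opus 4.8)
The plan is to prove the single implication $a'\in N_aN_{ab}N_{ac}\Rightarrow b'\in N_bN_{ab}N_{bc}$; the remaining implications then follow by the cyclic symmetry $a\mapsto b\mapsto c\mapsto a$ (and $a'\mapsto b'\mapsto c'\mapsto a'$), under which the hypothesis $(a',a)+(b',b)+(c',c)=0$ is invariant and the three norm groups $N_aN_{ab}N_{ac}$, $N_bN_{ab}N_{bc}$, $N_cN_{ac}N_{bc}$ are permuted cyclically. Since these implications form a cycle, they yield the two asserted equivalences at once. The two ingredients I would use throughout are the bilinearity of the quaternion symbol together with $(u,xy)=(u,x)+(u,y)$, and \Cref{chain-lemma}, which I read as the assertion that for any $u,v\in F^\times$ one has $z\in N_vN_{uv}$ if and only if $(z,v)=(u,y)$ for some $y\in F^\times$.

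So suppose $a'=n_an_{ab}n_{ac}$ with $(a,n_a)=0$, $(ab,n_{ab})=0$ and $(ac,n_{ac})=0$. The heart of the argument is to compute the symbol $(a',a)$. By bilinearity, $(a',a)=(n_a,a)+(n_{ab},a)+(n_{ac},a)$. Here $(n_a,a)=0$; moreover $(ab,n_{ab})=0$ gives $(n_{ab},a)=(n_{ab},b)$, and $(ac,n_{ac})=0$ gives $(n_{ac},a)=(n_{ac},c)$ (using $2$-torsion). Hence
\[(a',a)=(n_{ab},b)+(n_{ac},c)\qquad\text{in }\on{Br}(F).\]

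Comparing this with the hypothesis $(a',a)=(b',b)+(c',c)$ and rearranging, I would obtain
\[(b'/n_{ab},\,b)=(c'n_{ac},\,c)\qquad\text{in }\on{Br}(F).\]
The right-hand side lies in the image of $(c,-)$, so setting $w\coloneqq b'/n_{ab}$ we have $(w,b)=(c,c'n_{ac})$. Applying \Cref{chain-lemma} with $(u,v)=(c,b)$ then yields $w\in N_bN_{bc}$, and therefore $b'=n_{ab}\cdot w\in N_{ab}N_bN_{bc}=N_bN_{ab}N_{bc}$, as desired.

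I do not expect a serious obstacle: the computation of $(a',a)$ is the only nontrivial step, and it is a short symbol manipulation. The main thing to get right is the bookkeeping in the chain lemma — matching the correct pair of quadratic subfields $N_b,N_{bc}$ (whose classes multiply to $c$) with the pairing slot $b$ and the image slot $c$ — and confirming that the cyclic relabelling genuinely permutes the three norm groups, which it does since on subscripts $ab\mapsto bc$, $ac\mapsto ab$, $bc\mapsto ac$. Note also that this argument uses none of the specific relations (such as $b+c=1$) from the beginning of the section, reflecting the purely formal character of the statement.
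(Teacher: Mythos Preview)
Your proof is correct and takes essentially the same approach as the paper: decompose the assumed norm product, compute the corresponding symbol by bilinearity, rearrange the hypothesis, and apply \Cref{chain-lemma}, then conclude by symmetry. The only cosmetic difference is that the paper starts from $c'\in N_cN_{ac}N_{bc}$ and extracts both $a'\in N_aN_{ab}N_{ac}$ and $b'\in N_bN_{ab}N_{bc}$ from a single application of the chain lemma, whereas you derive one implication and then invoke the cyclic relabelling.
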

		
		\begin{proof}
			Suppose that $c'=n_cn_{ac}n_{bc}$, where $n_c\in N_c$, $n_{ac}\in N_{ac}$ and $n_{bc}\in N_{bc}$. Then
			\begin{align*}
				0&=(c',c)+(b',b)+(a',a)\\
				&=(n_cn_{ac}n_{bc},c)+(b',b)+(a',a)\\
				&=(n_{ac},c)+(n_{bc},c)+(b',b)+(a',a)\\
				&=(n_{ac},a)+(n_{bc},b)+(b',b)+(a',a)\\
				&=(n_{ac}a',a)+(n_{bc}b',b).
			\end{align*}
			Thus \Cref{chain-lemma} implies that $n_{ac}a'\in N_aN_{ab}$ and $n_{bc}b'\in N_bN_{ab}$. Since the statement of \Cref{a'b'c'} is symmetric in $a,b,c$,  this completes the proof.
		\end{proof}
		
		\begin{proof}[Proof of \Cref{q-t-s}]
			Immediate from \Cref{sc+tb=pa} and \Cref{formal}.
		\end{proof}
		
		\subsection{Proof of Theorem \ref{massey-deg}}\label{conclude-proof}
		
		As anticipated,  \Cref{massey-deg} will follow from \Cref{lambda-criterion} and \Cref{q-t-s}.

		\begin{proof}[Proof of \Cref{massey-deg}]
			Suppose that the Massey product $\ang{a,b,c,a}$ is defined over $F$. By \Cref{cup-products-zero} we have $(a,b)=(b,c)=(c,a)=0$ in $\on{Br}(F)$. If $F$ is a finite field, $\ang{a,b,c,a}$ vanishes by \Cref{dwyer-pro-free}. We may thus assume that $F$ is infinite. The Massey product $\ang{a,b,c,a}$ depends only on the classes of $a,b,c$ in $F^\times/F^{\times 2}$. Since $(b,c)=0$, by \Cref{cup-norm} there exist $x,y\in F^\times$ such that $bx^2+cy^2=1$. Replacing $b$ by $bx^2$ and $c$ by $cy^2$, we may suppose that $b+c=1$.

			Consider $(\A^2_F)^2$ with coordinates $(v_1,v_2,u_1,u_2)$, and let $Y\subset (\A^2_F)^2$ be the locally-closed subvariety given by (\ref{vi-ui-def}).  Since $(a,b)=(a,c)=0$ in $\on{Br}(F)$ and $F$ is infinite, $Y$ is $F$-rational and so has an $F$-point, that is, (\ref{vi-ui-def}) has a solution. Now \Cref{lambda-criterion}(a) implies that $r\in N_{a}N_{ab}N_{ac}$. It follows from \Cref{q-t-s} that 
   $t\in N_{c}N_{ac}N_{bc}$. By \Cref{lambda-criterion}(b), the Massey product $\ang{a,b,c,a}$ vanishes, as desired.
		\end{proof}

  \begin{rmk}
The final part of the proof of \Cref{massey-deg} may be replaced by the following specialization argument. Suppose that $F$ is infinite, consider $(\A^2_F)^2$ with coordinates $(v_1,v_2,u_1,u_2)$, and let $Z\subset (\A^2_F)^2$ be the smooth variety given by the first two equalities of (\ref{vi-ui-def}).  Then the restrictions to $Z$ of the coordinate functions of $(\A^2_F)^2$ satisfy (\ref{vi-ui-def}) over $F(Z)$. Since $Z$ is smooth and has an $F$-point, by \Cref{specialize-variety} we may replace $F$ by $F(Z)$. The conclusion then follows from \Cref{lambda-criterion} and \Cref{q-t-s}. A similar argument could also be used for the proof of \Cref{massey-double-deg} in the next section.
  \end{rmk}

		\section{Proof of Theorem \ref{massey-double-deg}}\label{massey-double-deg-section}

  Let  $b,c\in F^\times$ such that $b+c=1$. In particular $(b,c)=0$ in $\on{Br}(F)$. Suppose further that $(bc,b)=(bc,c)=0$. This is equivalent to $(b,-1)=(c,-1)=0$, that is, by \Cref{cup-norm}, to $-1\in N_b\cap N_c$. 
		
		Before moving to the proof of \Cref{massey-double-deg}, we specialize some of the definitions of \Cref{massey-deg-section} to the case $a=d=bc$. Let $(v_1,v_2,u_1,u_2)\in (F^{\times})^4$ be a solution of (\ref{vi-ui-def}), where we set $a=d=bc$. Set
		\[v\coloneqq v_1+v_2\sqrt{b}\in F_b^{\times},\qquad u\coloneqq u_1+u_2\sqrt{c}\in F_c^{\times},\]
		so that $N_b(v)=N_{c}(u)=bc$. The definition of (\ref{define-l}) specializes to 
		\begin{equation}\label{define-l-special}l\coloneqq v_1+u_1,\qquad \alpha\coloneqq lv_1+l\sqrt{bc}\in F_{bc}^{\times},\qquad \delta\coloneqq lu_1+l\sqrt{bc}\in F_{bc}^{\times}.\end{equation}
  We have
		\[N_{bc}(\alpha)=b(lv_2)^2,\qquad N_{bc}(\delta)=c(lu_2)^2.\]
		We may also write \[N_{bc}(\alpha)=b(lv_2)^2=(1-c)(lv_2)^2=N_c(lv_2(1+\sqrt{c})).\]
		Following the definition given after (\ref{vi-ui-def}), we have \[t\coloneqq 2l(v_1+v_2) = \on{Tr}_{bc}(\alpha)+\on{Tr}_c(lv_2(1+\sqrt{c})).\]
		Since $v_1+v_2\neq 0$ by (\ref{vi-ui-def}), we have $t\neq 0$.
		\begin{lemma}\label{-d2}
			We have $-t^2\in N_{b,c}$.
		\end{lemma}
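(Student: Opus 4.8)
The plan is to exhibit $-f^2$ explicitly as a product and quotient of norms from $F_{b,c}$ of three concrete units, and then conclude by multiplicativity of $N_{b,c}$. The starting point is the degree-four norm formula: grouping the four conjugates of $w = p + q\sqrt b + r\sqrt c + s\sqrt{bc}\in F_{b,c}$ as $(w\,\sigma_b w)(\sigma_c w\,\sigma_b\sigma_c w)$ and using $\sigma_c(w\,\sigma_b w) = \sigma_c w\,\sigma_b\sigma_c w$ gives
\[ N_{b,c}(w) = A^2 - cB^2, \qquad A = p^2 + cr^2 - bq^2 - bcs^2, \quad B = 2(pr - bqs). \]
I would record this identity first; it is an elementary computation.

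The key step is to evaluate $N_{b,c}$ on three units of $F_{b,c}^\times$, repeatedly using the defining relations $v_1^2 - bv_2^2 = bc$, $u_1^2 - cu_2^2 = bc$ and $b + c = 1$. Writing $v + u = l + v_2\sqrt b + u_2\sqrt c$, one finds
\[ N_{b,c}(v+u) = 4bcl^2, \qquad N_{b,c}\big((v_1+v_2) + v_2\sqrt c + \sqrt{bc}\big) = 4bv_2^2(v_1+v_2)^2, \]
\[ N_{b,c}\big(v_1 + v_2\sqrt b + \sqrt{bc}\big) = -4b^2cv_2^2. \]
Each is a short application of the formula above: for $v+u$ one gets $A = 2u_1l$, $B = 2lu_2$, whence $A^2 - cB^2 = 4l^2(u_1^2 - cu_2^2) = 4bcl^2$; for the third element one even has $A = 0$. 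All three elements are units, since their norms are nonzero by the genericity conditions in (\ref{vi-ui-def}).

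Granting these, the conclusion is immediate. Since $f = 2l(v_1+v_2)$,
\[ \frac{N_{b,c}(v+u)\cdot N_{b,c}\big((v_1+v_2)+v_2\sqrt c+\sqrt{bc}\big)}{N_{b,c}\big(v_1 + v_2\sqrt b + \sqrt{bc}\big)} = \frac{4bcl^2\cdot 4bv_2^2(v_1+v_2)^2}{-4b^2cv_2^2} = -4l^2(v_1+v_2)^2 = -f^2, \]
so $-f^2 = N_{b,c}(W)$ with $W = (v+u)\big((v_1+v_2)+v_2\sqrt c+\sqrt{bc}\big)\big(v_1+v_2\sqrt b+\sqrt{bc}\big)^{-1} \in F_{b,c}^\times$.

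The main obstacle is not the verification but the discovery of the right three elements. One cannot reduce $-f^2 \in N_{b,c}$ to the condition $-1 \in N_{b,c}$, because a square need not lie in $N_{b,c}$, even though $N_{b,c} \subseteq N_b \cap N_c \cap N_{bc}$; an actual element must be produced. The troublesome factor $(v_1+v_2)^2$ can only arise from a genuinely biquadratic element (one mixing $\sqrt c$ and $\sqrt{bc}$), and the resulting stray factors $b$, $c$ and $v_2^2$ must then be cleared using norms such as $N_{b,c}(v+u)$ and $N_{b,c}(v_1+v_2\sqrt b+\sqrt{bc})$. The delicate point is to arrange a combination in which every extraneous factor is itself a norm, leaving no surviving square; the choice above is engineered so that the $b$, $c$ and $v_2$ contributions cancel exactly.
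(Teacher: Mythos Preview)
Your proof is correct. The approach is essentially the same as the paper's: exhibit $-f^2$ as a product and quotient of explicit norms from $F_{b,c}^\times$. The paper organizes the computation via \Cref{comes-from-ac}(2), which amounts to the identity $N_{a,b}(\rho+\mu) = N_b(\mu)\bigl(\on{Tr}_a(\rho)+\on{Tr}_b(\mu)\bigr)^2$, and applies it to the pairs $(\alpha,\, lv_2(1+\sqrt{c}))$, $(v,u)$, and $(v/\sqrt{b},\sqrt{c})$, together with $N_{b,c}(2\sqrt{c})=16c^2$; you instead compute $N_{b,c}$ directly on $v+u$, $(v_1+v_2)+v_2\sqrt{c}+\sqrt{bc}$, and $v+\sqrt{bc}$ via the formula $A^2-cB^2$. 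Your first two elements are exactly (up to the harmless scalar $l$) the elements $\rho+\mu$ arising from the paper's first two applications of \Cref{comes-from-ac}(2), and your third differs from the paper's $v/\sqrt{b}+\sqrt{c}$ only by a factor of $\sqrt{b}$. Your packaging is slightly more economical (three elements against four), while the paper's use of \Cref{comes-from-ac}(2) makes the \emph{discovery} of the elements more systematic, which addresses the concern you raise in your final paragraph.
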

		
		\begin{proof}
			By \Cref{comes-from-ac}(2) applied to $\rho=\alpha$ and $\mu=lv_2(1+\sqrt{c})$, we have that $b(lv_2)^2t^2\in N_{b,c}$.	Thus, in order to prove that $-t^2\in N_{b,c}$, it suffices to show that $-b(lv_2)^2\in N_{b,c}$. Since $N_b(v)=N_c(u)$, by \Cref{comes-from-ac}(2) applied to $\rho=v$ and $\mu=u$, we have $4bcl^2\in N_{b,c}$.
   
   We also have $N_b(v/\sqrt{b})=-c=N_c(\sqrt{c})$, hence by \Cref{comes-from-ac}(2) applied to $\rho=v/\sqrt{b}$ and $\mu=\sqrt{c}$ we obtain that $-4cv_2^2\in N_{b,c}$.
		Since $16c^2=N_{b,c}(2\sqrt{c})$, it follows that
			\[-b(lv_2)^2=(4bcl^2)\cdot (-4cv_2^2)\cdot (16c^2)^{-1} \in N_{b,c},\]
			as desired.
		\end{proof}
		
		\begin{proof}[Proof of \Cref{massey-double-deg}]
			If $F$ is a finite field, then $N_{b,c}=F^\times$ and by \Cref{dwyer-pro-free} every Massey product over $F$ vanishes, hence \Cref{massey-double-deg} is true in this case. From now on, we suppose that $F$ is infinite. Multiplying $a,b,c,d$ by non-zero squares does not alter the Massey product $\ang{a,b,c,d}$, hence we may suppose that $a=d=bc$ and $b+c=1$. By \Cref{massey-deg}, we know that (1) is equivalent to (2). 
			
			We now prove that (2) is equivalent (3). We have $(b,c)=0$, and we may suppose that $(bc,b)=(bc,c)=0$, as it is implied by either (2) or (3). Consider $(\A^2_F)^2$ with coordinates $(v_1,v_2,u_1,u_2)$, and let $Y\subset (\A^2_F)^2$ be the locally-closed subvariety given by (\ref{vi-ui-def}). The $F$-variety $Y$ is rational because $(bc,b)=(bc,c)=0$. Thus, since $F$ is infinite, $Y$ has an $F$-point, that is, (\ref{vi-ui-def}) has a solution. 
   
			Recall that  we defined  $\alpha\in F_a^\times$ and $\delta\in F_d^\times$ such that $N_{bc}(\alpha)=b$ in $F^\times/F^{\times 2}$ and $N_{bc}(\delta)=c$ in $F^\times/F^{\times 2}$ in (\ref{define-l-special}), as a special case of (\ref{define-l}). By \Cref{u5-cor}, the Massey product $\ang{bc,b,c,bc}$ vanishes if and only if there exist $x,y\in F^\times$ such that 
			$(\alpha x,c)=(\alpha x,\delta y)=0$ in $\on{Br}(F_{bc})$.
   By \Cref{albert-cor} and \Cref{alpha-cup-c}, these equalities are equivalent to $N_{c}N_{bc}\cap tN_{b}N_{c}\neq\emptyset$. The latter is equivalent to $t\in N_{b}N_{c}N_{bc}$, which by \Cref{biquadratic-triple-norm}(2) is equivalent to $t^2\in N_{b,c}$. By \Cref{-d2}, this is equivalent to $-1\in N_{b,c}$, as desired. This shows that (2) is equivalent to (3), as desired. 
   \end{proof}

As an application of \Cref{massey-double-deg}, we recover the Harpaz--Wittenberg example \cite[Example A.15]{guillot2018fourfold}. 

Let $K/F$ be a Galois extension of number fields, $v$ be a place of $F$ and $w$ be a place of $K$ above $v$. Let $F_v$ be the completion of $F$ at $v$ and $K_w$ be the completion of $K$ at $w$. By definition, the local degree of $K/F$ at $v$ is equal to $[K_w:F_v]$.
        
        Let $w_1,\dots,w_m$ be the places of $K$ above $v$. Since $\on{Gal}(K/F)$ acts transitively on the $w_i$, the local degree of $v$ does not depend on the choice of $w$. Moreover, the natural homomorphism of $F_v$-algebras $K\otimes_FF_v\to \prod_{i=1}^mK_{w_i}$ is an isomorphism (see e.g. \cite[Chapter VII, Proof of Proposition 1.2]{cassels1967algebraic}), hence the local degree of $v$ is a divisor of $[K:F]$, and it is equal to $[K:F]$ if and only if $m=1$, that is, if and only if $K\otimes_FF_v$ is a field.

    \begin{lemma}\label{-1-not-norm}
    Let $F=\Q$. Then $-1$ does not belong to $N_{2,17}$.
    \end{lemma}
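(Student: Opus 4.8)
The plan is to exploit the tower $\Q \subset k \subset K$ with $k \coloneqq \Q(\sqrt{17})$ and $K \coloneqq F_{2,17} = \Q(\sqrt 2,\sqrt{17})$, so that $N_{2,17} = N_{K/\Q}(K^\times)$. Since $N_{K/\Q} = N_{k/\Q}\circ N_{K/k}$, one has $-1 \in N_{2,17}$ if and only if there is $w \in k^\times$ with $N_{k/\Q}(w) = -1$ which is moreover a norm from the quadratic extension $K = k(\sqrt 2)$ of $k$; by \Cref{cup-norm} applied over $k$, the latter condition reads $(w,2) = 0$ in $\on{Br}(k)$. Thus I would reduce the statement to showing that no $w \in k^\times$ of norm $-1$ satisfies $(w,2)=0$ in $\on{Br}(k)$.

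Fixing the unit $w_0 \coloneqq 4+\sqrt{17}$ (of norm $16-17 = -1$), every norm-$(-1)$ element of $k$ is $w_0\,\overline\beta/\beta$ for some $\beta \in k^\times$ by Hilbert 90. Using $\overline\beta/\beta = N_{k/\Q}(\beta)\beta^{-2}$ and that $\on{Br}(k)[2]$ kills squares, this gives $(w,2) = (w_0,2) + \on{res}_{k/\Q}(x,2)$, where $x \coloneqq N_{k/\Q}(\beta)$ runs over $N_{17}$; so $-1 \in N_{2,17}$ iff $(w_0,2) = \on{res}_{k/\Q}(x,2)$ for some $x \in N_{17}$. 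I would pin down both sides by local invariants. As $w_0$ is a global unit and $2$ is a unit away from $2$, the class $(w_0,2)$ is unramified at every finite prime of odd residue characteristic and at the (real, split) infinite places, hence supported on the two primes $\mathfrak p_1,\mathfrak p_2$ of $k$ above $2$, where $k$ completes to $\Q_2$ (note $17$ is a square in $\Q_2$). Writing $s\in\Z_2^\times$ with $s^2 = 17$, the two completions carry $w_0$ to $4\pm s \equiv 5, 3 \pmod 8$, and since $(u,2)_2 \neq 0$ exactly for $u \equiv \pm 3 \pmod 8$, both invariants of $(w_0,2)$ equal $\tfrac12$. Because $\on{inv}$ of $\on{res}_{k/\Q}(x,2)$ at a prime above a place $v$ of $\Q$ is (local degree)$\cdot \on{inv}_v(x,2)$, the required equality forces (A) $(x,2)_2 = -1$ and (B) $(x,2)_\ell = 0$ for every odd $\ell \neq 17$ that splits in $k$; at inert primes and at $17,\infty$ the factor $2$ in the local degree kills the invariant automatically.

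The contradiction then comes from the global reciprocity law $\sum_v (x,2)_v = 0$ combined with $x \in N_{17}$. The membership $x \in N_{17}$ gives $\left(\tfrac{17}{\ell}\right)^{v_\ell(x)} = 1$ for all odd $\ell \neq 17$, so $v_\ell(x)$ is even at every prime $\ell$ inert in $k$, which kills the inert contributions to $\sum_v (x,2)_v$; the split contributions vanish by (B); the prime $17$ contributes $0$ since $\left(\tfrac{2}{17}\right) = 1$; and $\infty$ contributes $0$ since $2 > 0$. Hence $\sum_v (x,2)_v = (x,2)_2 = \tfrac12 \neq 0$, contradicting reciprocity, so no such $x$ exists and $-1 \notin N_{2,17}$.

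The main obstacle is that the obstruction is irreducibly global: $-1$ is in fact a local norm at every place (equivalently, every local degree of $K/\Q$ is at most $2$, so the Hasse norm principle is allowed to fail), and therefore no single local computation can produce it. The delicate steps are extracting the precise conditions (A) and (B) from the invariant bookkeeping, and observing that it is exactly membership of $x$ in $N_{17}$ which forces the inert-prime invariants to vanish, leaving the lone uncancelled invariant $\tfrac12$ at the prime $2$.
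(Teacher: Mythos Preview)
Your proof is correct, modulo one harmless slip: the infinite place of $\Q$ \emph{splits} in $k=\Q(\sqrt{17})$, so the local degree there is $1$, not $2$; the reason no condition arises at $\infty$ is simply that $2>0$ forces both $(x,2)_\infty$ and the real invariants of $(w_0,2)$ to vanish (which you do use correctly in the reciprocity step).

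Your route is genuinely different from the paper's. The paper invokes a criterion of Serre and Tate (Cassels--Fr\"ohlich, Exercise~5.2) giving an explicit obstruction character $\omega(c)=\prod_{v\in S_1}(17,c)_v$, with $S_1$ the set of places split in $\Q(\sqrt 2)$, that detects membership in $N_2N_{17}N_{34}$; one computes $\omega(3)=(17,3)_{17}=-1$, so $3\notin N_2N_{17}N_{34}$ and hence $9\notin N_{2,17}$ by \Cref{biquadratic-triple-norm}(2), and then one exhibits $-9\in N_{2,17}$ by an explicit norm identity to conclude. You instead work directly with the tower $\Q\subset k\subset K$, parametrize the norm-$(-1)$ elements of $k$ via Hilbert~90, translate the norm condition into $(w_0,2)=\on{res}_{k/\Q}(x,2)$ with $x\in N_{17}$, and rule this out by matching local invariants against Hilbert reciprocity for $(x,2)$ over $\Q$. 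Your argument is more self-contained (no black-box obstruction character, no need to discover the explicit norm expression for $-9$) and makes the global nature of the failure transparent; the paper's argument is shorter because the reciprocity bookkeeping is hidden inside the Serre--Tate criterion. Both ultimately rest on the same global input.
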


\begin{proof}
      Let $K\coloneqq \Q(\sqrt{2}, \sqrt{17})$. It is not difficult to prove that the local degree of $K/\Q$ at any place $v$ of $\Q$ is either $1$ or $2$. For all $c\in \Q^\times$, define
			\[\omega(c)\coloneqq \prod_{v\in S_1}(17,c)_v,\]
			where $S_1$ is the set of places $v$ of $\Q$ that split in $\Q(\sqrt{2})$, and $(17,c)_v$ denotes the symbol $(17,c)$ in $\on{Br}(\Q_v)$. By a result due to Serre and Tate, $c$ belongs to $N_2 N_{17} N_{34}$ if and only if $\omega(c)=1$; see \cite[Exercise 5.2]{cassels1967algebraic} or \cite[Lemma p. 114]{hurlimann1986h3}.  
   
   Note that $3$ does not belong to $S_1$ while $17$ belongs to $S_1$. Moreover, $3$ is not square modulo $17$, and hence $\omega(3) = (17,3)_{17} = -1$. By the aforementioned result of Serre and Tate, $3$ is not in $N_2 N_{17} N_{34}$, which by \Cref{biquadratic-triple-norm}(2) implies that $9$ does not belong to $N_{2,17}$. On the other hand, 
			\[-9=N_{2,17}\left(1-\frac{3}{2}\sqrt{2}-\frac{1}{2}\sqrt{34}\right),\] and hence $-1$ does not belong to $N_{2,17}$.
\end{proof}
  
		\begin{prop}[Harpaz--Wittenberg]\label{harpaz}
			Let $F=\Q$, $b=2$, $c=17$ and $a=d=bc=34$. Then $(a,b)=(b,c)=(c,d)=0$ but the Massey product $\ang{a,b,c,d}$ is not defined.
		\end{prop}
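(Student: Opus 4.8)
The plan is to deduce everything from \Cref{massey-double-deg}, whose hypotheses are met here: since $a=d=bc=34$, both $ad=34^2$ and $abc=a\cdot bc=a^2=34^2$ are squares in $\Q$. By that theorem, $\ang{a,b,c,d}$ is defined if and only if $(b,c)=0$ in $\on{Br}(\Q)$ and $-1\in N_{b,c}$. My goal is therefore to check that the first of these two conditions holds while the second fails, so that the Massey product is not defined.

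First I would verify the three Brauer-class identities $(a,b)=(b,c)=(c,d)=0$ by exhibiting explicit norms and invoking \Cref{cup-norm}. For $(b,c)=(2,17)$ it suffices to note $17=5^2-2\cdot 2^2$, so $17\in N_2$ and $(2,17)=0$. For $(a,b)=(34,2)$ I would expand by bilinearity as $(2,2)+(17,2)$: the summand $(17,2)=(2,17)$ is $0$, while $(2,2)=(2,-1)=0$ because $-1=1^2-2\cdot 1^2\in N_2$. For $(c,d)=(17,34)=(17,2)+(17,17)$, the first term vanishes and $(17,17)=(17,-1)=0$ because $-1=4^2-17\cdot 1^2\in N_{17}$. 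This establishes the vanishing asserted in the statement, and the computation $(2,17)=0$ simultaneously confirms the first half of condition (3) of \Cref{massey-double-deg}.

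The decisive step is to show that $-1\notin N_{b,c}=N_{2,17}$, which is precisely \Cref{-1-not-norm}. Granting it, condition (3) of \Cref{massey-double-deg} fails, and hence so does condition (1): the Massey product $\ang{a,b,c,d}$ is not defined, as claimed.

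The only substantial input is \Cref{-1-not-norm}, and that is where the real difficulty resides: deciding membership in the norm group of the biquadratic field $\Q(\sqrt 2,\sqrt{17})$ requires the reciprocity computation of Serre and Tate, which reduces the question to a finite product of local Hilbert symbols and evaluates it at the place $v=17$. Everything else is a routine assembly of norm identities together with the structural equivalence already packaged in \Cref{massey-double-deg}, so I expect the Proposition itself to be a short corollary once that lemma is in hand.
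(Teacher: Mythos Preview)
Your proposal is correct and follows essentially the same approach as the paper's own proof: verify the three quaternion symbols vanish via explicit norm identities, then invoke \Cref{massey-double-deg} together with \Cref{-1-not-norm}. The only cosmetic difference is in the particular witnesses chosen for $(2,17)=0$ (you use $17=5^2-2\cdot 2^2$, the paper uses $2\cdot 4^2+17=7^2$), which is immaterial.
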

		
		\begin{proof}
        We first show that $(34,2)=(2,17)=(17,34)=0$, or equivalently $(2,17)=(2,-1)=(17,-1)=0$. These follow from the identities
        \[2\cdot 4^2+17\cdot 1^2=7^2,\qquad 2\cdot 1^2-1\cdot 1^2=1,\qquad 17\cdot 1^2 -1\cdot 4^2=1^2.\]
			By \Cref{massey-double-deg}, to prove that $\ang{a,b,c,d}$ is not defined it suffices to show that $-1$ does not belong to $N_{2,17}$, which was proved in \Cref{-1-not-norm}.
		\end{proof}

		\section{Proof of Theorem \ref{mainthm-positselski}}\label{positselski-sec}

Recall that a field $E$ is said to be \emph{$2$-special} if the degree of every finite field extension of $E$ is a power of $2$.
		
		\begin{lemma}\label{sum-is-even}
			Suppose that $F$ is a $2$-special field, and let $a\in F^{\times}\setminus F^{\times 2}$. Let $X$ be a non-split smooth projective conic over $F$, let $\phi\colon X_{F_a}\to X$ be the projection map, and consider the proper pushforward homomorphism \[\phi_*\colon \on{Div}(X_{F_a})\to \on{Div}(X).\] Let $\sum m_xx\in \on{Div}(X)$ be a principal divisor in the image of $\phi_*$. Then the sum of the $m_x$ over all closed points $x\in X$ such that $F(x)\simeq F_a$ is even.
		\end{lemma}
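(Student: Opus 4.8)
The plan is to convert the claim into a statement about the divisibility of $\deg_F$ by $4$, and then to control this degree through the norm. Throughout write $\pi\colon X_{F_a}\to X$ for the canonical degree-$2$ morphism and $\kappa(x)=F(x)$ for residue fields. First I would sort the closed points of $X$ according to their fibres under $\pi$. Since $F$ is $2$-special, every $[\kappa(x):F]$ is a power of $2$, and since $X$ is non-split it has no $F$-point, so each such degree is at least $2$. The fibre $\pi^{-1}(x)=\Spec(\kappa(x)\otimes_FF_a)$ splits into two points of residue field $\kappa(x)$ exactly when $a\in\kappa(x)^{\times 2}$ (equivalently $F_a\subseteq\kappa(x)$), and is otherwise a single point of residue field $\kappa(x)(\sqrt a)$, of degree $2$ over $\kappa(x)$. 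Writing $D=N_a(E)$ with $E=\sum_y n_yy$, this gives $m_x=n_{y_1}+n_{y_2}$ at a split point and $m_x=2n_y$ at an inert point. Now $\kappa(x)\simeq F_a$ holds precisely for the split points of degree $2$; a split point of degree $>2$ has degree divisible by $4$; and at an inert point $m_x$ is even. Reducing $\deg_F(D)=\sum_xm_x[\kappa(x):F]$ modulo $4$, only the points with $\kappa(x)\simeq F_a$ survive, each contributing $2m_x$, whence
\[\sum_{\kappa(x)\simeq F_a}m_x\equiv\tfrac12\deg_F(D)\pmod 2.\]

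Next I would record the behaviour of $\deg_F$ under the norm, which lets me handle an \emph{arbitrary} $D$ in the image rather than only a principal one. For a closed point $y$ of $X_{F_a}$ lying over $x$ one has $N_a(y)=[\kappa(y):\kappa(x)]\,x$, hence $\deg_F(N_a(y))=[\kappa(y):F]=2[\kappa(y):F_a]=2\deg_{F_a}(y)$; by additivity $\deg_F(N_a(E))=2\deg_{F_a}(E)$ for every $E\in\on{Div}(X_{F_a})$. In particular $\tfrac12\deg_F(D)=\deg_{F_a}(E)$, so the whole statement reduces to proving that this integer is even.

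Finally I would argue that $\deg_{F_a}(E)$ is even, i.e.\ that every closed point of $X_{F_a}$ has even degree over $F_a$. This is where the hypotheses must be used decisively: a finite extension of a $2$-special field is again $2$-special, so $F_a$ is $2$-special; and since $X_{F_a}$ is a non-split conic over $F_a$ it has no $F_a$-rational point (\Cref{cup-norm}), so every closed point of $X_{F_a}$ has $2$-power degree at least $2$, hence even. Thus $\deg_{F_a}(E)\in 2\Z$, and combined with the first paragraph the sum $\sum_{\kappa(x)\simeq F_a}m_x$ is even. The main obstacle is exactly this last step: the parity in question is governed by $\deg_{F_a}(E)=\tfrac12\deg_F(D)$, which is otherwise unconstrained, and it is the non-splitness of $X_{F_a}$ over $F_a$ that forces all closed-point degrees — and hence $\deg_{F_a}(E)$ — to be even. (Consistently with this, non-splitness of $X_{F_a}$ also forces the split points of $X$ to have degree divisible by $4$, so the congruence of the first paragraph refines directly to $\deg_F(D)\equiv 0\pmod 4$.)
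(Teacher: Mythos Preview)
Your reduction in the first two paragraphs is correct and matches the paper's argument exactly: both arrive at the congruence
\[
\sum_{\kappa(x)\simeq F_a} m_x \;\equiv\; \tfrac12\deg_F(D)\;=\;\deg_{F_a}(E)\pmod 2.
\]
The gap is in your final step. You assert that $X_{F_a}$ is a non-split conic over $F_a$, but nothing in the hypotheses guarantees this, and in fact the opposite holds in the only non-vacuous case: if there exists \emph{any} closed point $x\in X$ with $F(x)\simeq F_a$, then the two points of $\pi^{-1}(x)$ are $F_a$-rational points of $X_{F_a}$, so $X_{F_a}$ is split. (Concretely, in the paper's application the conic $X$ corresponds to the class $(a,b)$, which dies over $F_a$.) Thus your extra hypothesis forces the sum in question to be empty, and the argument says nothing in the interesting case. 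Worse, without some extra hypothesis the lemma as stated is actually false: when $X_{F_a}$ is split, the norm of a single $F_a$-point is a point $x$ with $F(x)\simeq F_a$ and coefficient $1$.

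The paper closes the argument differently: rather than invoking non-splitness of $X_{F_a}$, it uses $\sum_x m_x\deg(x)=0$, i.e.\ $\deg_F(D)=0$. This too is not literally part of the stated hypotheses, but it holds in the only place the lemma is applied, where $D=\on{div}(n_a)$ is principal. With $\deg_F(D)=0$ your own congruence gives the conclusion immediately (equivalently, $\deg_{F_a}(E)=\tfrac12\deg_F(D)=0$ is trivially even). So the repair is simply to replace the appeal to non-splitness of $X_{F_a}$ by the observation that the divisors of interest are principal, hence of degree zero.
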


  		\begin{proof}
   Let $x\in X$ be a closed point. Note that $\on{deg}(x)\coloneqq [F(x):F]$ is a power of $2$, and it is different from $1$ since $X(F)=\emptyset$. Moreover, $\phi^{-1}(x)=\on{Spec}(F_a\otimes_FF(x))$ and $[F_a\otimes_FF(x):F(x)]=2$, hence either $F_a\otimes_FF(x)\simeq F_a\times F_a$ or it is a quadratic field extension of $F(x)$. In other words, either $\phi^{-1}(x)$ is the union of two closed points of degree $1$ over $x$, or it consists of a single closed point of degree $2$ over $x$. Therefore, there are three mutually exclusive possibilities for the closed point $x$:
   \begin{enumerate}[label=(\roman*)]
       \item $\on{deg}(x)$ is divisible by $4$,
       \item $\on{deg}(x)=2$ and $\phi^{-1}(x)$ consists of a single closed point of degree $2$ over $x$,
       \item $\on{deg}(x)=2$ and $\phi^{-1}(x)$ consists of two closed points of degree $1$ over $x$.
   \end{enumerate}
   Moreover, $x$ satisfies (iii) if and only if $\on{deg}(x)=2$ and $a\in F(x)^{\times 2}$, that is, if and only if $F(x)\simeq F_a$. Thus, the proof will be complete once we show that the sum of the $m_x$ over all closed points $x\in X$ satisfying (iii) is divisible by $4$.

   If $x$ satisfies (i), the number $m_x\on{deg}(x)$ is divisible by $4$. If $x$ satisfies (ii), then $m_x$ is divisible by $2$, hence $m_x\on{deg}(x)$ is divisible by $4$. Since the divisor $\sum m_xx$ is principal, its degree is equal to zero, that is, $\sum m_x\on{deg}(x)=0$. It follows that the sum of the $m_xx$ for those $x$ as in case (iii) is divisible by $4$, as desired.
		\end{proof}

		\begin{lemma}\label{example-not-defined}
			Let $E$ be a $2$-special field, and let $a,b\in E^{\times}$ be such that 
			\begin{enumerate}
				\item $a$, $b$ and $c\coloneqq 1-b$ are independent in $E^{\times}/E^{\times 2}$, and
				\item $(a,c)=0$ and $(a,b)\neq 0$ in $\on{Br}(E)$.
			\end{enumerate}
			Let $X$ be the smooth projective conic over $E$ corresponding to $(a,b)$, and set $F\coloneqq E(X)$.
			Then $(a,b)=(a,c)=(b,c)=0$ in $\on{Br}(F)$ but the Massey product $\ang{a,b,c,a}$ is not defined over $F$.
		\end{lemma}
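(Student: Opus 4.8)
The plan is to treat the three Brauer-group vanishings over $F$ directly, and then to prove that $\ang{a,b,c,a}$ is \emph{not} defined over $F$ by feeding the explicit criterion of \Cref{lambda-criterion}(a) into a parity count on the conic $X$, with \Cref{sum-is-even} as the key input. For the vanishings: over $F=E(X)$ the conic $X$ acquires a rational point (its generic point), so $(a,b)=0$ in $\on{Br}(F)$ by \Cref{cup-norm}; the class $(a,c)$ is already trivial over $E$ by hypothesis (2), hence over $F$; and $(b,c)=0$ over $E$ because $b+c=1$ exhibits the point $(1:1:1)$ on the conic $bX^2+cY^2=Z^2$, again by \Cref{cup-norm}. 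So all three symbols vanish over $F$, as required.

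For non-definability I would use the criterion with $d=a$. Write $X$ as the projective conic $a\xi^2+b\eta^2=\zeta^2$ and $F=E(X)=E(s,w)$ with $s=\eta/\xi$, $w=\zeta/\xi$, so that $w^2-bs^2=a$; this lets me take $v_1=w$ and $v_2=s$ in $F$. Since $(a,c)=0$ already over $E$, and $E$ is infinite (as $(a,b)\neq 0$ forces $\on{Br}(E)\neq 0$), I can choose $u_1,u_2\in E^\times$ with $u_1^2-cu_2^2=a$ and $u_1+u_2\neq 0$; the remaining inequalities in (\ref{vi-ui-def}) then hold automatically because $w,s\notin E$. By \Cref{lambda-criterion}(a), $\ang{a,b,c,a}$ is defined over $F$ if and only if $r\in N_aN_{ab}N_{ac}$, where $r=2(v_1+v_2)(u_1+u_2)v_2u_2=2(u_1+u_2)u_2\cdot(w+s)\,s$. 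As $2(u_1+u_2)u_2\in E^\times$ is a constant, on $X$ one has $\on{div}(r)=\on{div}(w+s)+\on{div}(s)$. Intersecting $X$ with the lines $\{\eta=0\}$, $\{\xi=0\}$ and $\{\zeta+\eta=0\}$ yields degree-$2$ closed points $P_a,P_b,P_{ac}$ with residue fields $E_a,E_b,E_{ac}$, whence $\on{div}(s)=P_a-P_b$, $\on{div}(w+s)=P_{ac}-P_b$, and therefore $\on{div}(r)=P_a+P_{ac}-2P_b$.

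Now suppose for contradiction that $r=n_a\,n_{ab}\,n_{ac}$ with $n_a\in N_a$, $n_{ab}\in N_{ab}$, $n_{ac}\in N_{ac}$, and track the parity of the total coefficient at closed points of $X$ whose residue field is isomorphic to $E_a$. Each $\on{div}(n_?)$ is the pushforward of a principal divisor along $X_{E_?}\to X$. At a closed point $x$ with $E(x)\simeq E_a$, the independence of $a,b,c$ in $E^\times/E^{\times 2}$ (hypothesis (1)) forces $ab$ and $ac$ to be non-squares in $E_a$, so $x$ stays inert in $X_{E_{ab}}$ and in $X_{E_{ac}}$; hence the coefficient of $x$ in $\on{div}(n_{ab})$ and in $\on{div}(n_{ac})$ is even. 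For $n_a$, \Cref{sum-is-even} gives that the \emph{sum} of the coefficients over all such $x$ is even. Adding these, the total coefficient at residue-field-$E_a$ points of $\on{div}(r)$ is even. But from $\on{div}(r)=P_a+P_{ac}-2P_b$ this total is $1$, since only $P_a$ has residue field $\simeq E_a$ (indeed $E_{ac}\not\simeq E_a$ and $E_b\not\simeq E_a$ by independence). This contradiction shows $r\notin N_aN_{ab}N_{ac}$, so $\ang{a,b,c,a}$ is not defined over $F$.

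The crux is the parity bookkeeping: although $a$ does become a norm over $F$, a product of norms from $E_a$, $E_{ab}$, $E_{ac}$ cannot account for the odd multiplicity of the single $E_a$-point $P_a$ in $\on{div}(r)$. The delicate point is to separate the three norm contributions at $E_a$-points — the linear-disjointness observations (that $ab$ and $ac$ are non-squares in $E_a$) kill the $N_{ab}$ and $N_{ac}$ contributions modulo $2$, while \Cref{sum-is-even} controls the $N_a$ contribution — and the independence hypothesis is exactly what guarantees that $\on{div}(r)$ carries a unique, odd-coefficient $E_a$-point.
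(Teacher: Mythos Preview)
Your proof is correct and follows essentially the same approach as the paper: apply \Cref{lambda-criterion}(a) with $v_1,v_2$ the tautological coordinates on the conic and $u_1,u_2\in E^\times$, then rule out $r\in N_aN_{ab}N_{ac}$ by a parity count of residue-field-$E_a$ points in $\on{div}(r)$, using \Cref{sum-is-even} for the $N_a$-contribution and linear disjointness for the $N_{ab}$- and $N_{ac}$-contributions. The only cosmetic difference is that the paper first divides $r$ by the square $v_2^2$ to work with $f=(v_1+v_2)/v_2$, obtaining $\on{div}(f)=P_{ac}-P_a$, whereas you compute $\on{div}(r)=P_a+P_{ac}-2P_b$ directly; these differ by the even divisor $2(P_a-P_b)$, so the parity argument is identical.
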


		\begin{proof}
			We have $(a,b)=(b,c)=(c,a)=0$ in $\on{Br}(F)$ because $b+c=1$ and every quaternion algebra splits over the function field of the corresponding conic.
			
			Consider the projective plane $\P^2_E$ with homogeneous coordinates $v_0,v_1,v_2$, and choose a model of $X\subset \P^2_E$ given by the equation \[v_1^2-bv_2^2=av_0^2.\]
			Define \[f\coloneqq \frac{v_1+v_2}{v_2}\in F^{\times}.\]
			Simple computations show that the equation $v_1+v_2=0$ cuts out a point $x_1\in X$ of degree $2$ with residue field $E(x_1)=E_{ac}$, and that the equation $v_2=0$ cuts out a point $x_2\in X$ of degree $2$ with $E(x_2)=E_a$. Thus
			\[\on{div}(f)=x_1-x_2.\]
			Since $(a,b)\neq 0$ in $\on{Br}(E)$, the field $E$ must be infinite. Thus, as $(a,c)=0$ in $\on{Br}(E)$, we may find $u_1,u_2\in E^\times$ such that $u_1+u_2\neq 0$ and $u_1^2-cu_2^2=a$. Suppose by contradiction that the Massey product $\ang{a,b,c,a}$ is defined. Then by \Cref{lambda-criterion}(a) we have
			\[2(u_1+u_2)u_2f=2(v_1+v_2)(u_1+u_2)v_2u_2v_2^{-2}\in N_aN_{ab}N_{ac}.\]
			Since $2(u_1+u_2)u_2\in E^{\times}$, we may write
			\[f=f_0n_an_{ab}n_{ac}\]
			for some $f_0\in E^{\times}$, $n_a\in N_a$, $n_{ab}\in N_{ab}$ and $n_{ac}\in N_{ac}$. Passing to divisors and using that $\on{div}(f_0)=0$, we conclude that
			\begin{equation}\label{div-f-norms}\on{div}(f)=\on{div}(n_a)+\on{div}(n_{ab})+\on{div}(n_{ac}).\end{equation}
			We write $\on{div}(n_a)=\sum m_x x$, $\on{div}(n_{ab})=\sum m'_x x$ and $\on{div}(n_{ac})=\sum m''_x x$. By \Cref{sum-is-even}, the sum of the $m_x$ over all closed points $x$ such that $E(x)\simeq E_a$ is even. By assumption $ab$ and $ac$ are not squares in $E$. It follows that the inverse image of $X_{ab}\to X$ and $X_{ac}\to X$ at every closed point of $x$ with residue field $E(x)\simeq E_a$ consists of a single closed point whose residue field has degree $2$ over $E(x)$. Therefore the sum of the $m'_x$ (resp. $m''_x$) over all closed points $x$ such that $E(x)\simeq E_a$ is also even. However, by (\ref{div-f-norms}) the sum of the $m_x+m'_x+m''_x$ over all closed points $x$ such that $E(x)\simeq E_a$ is equal to $1$, which is a contradiction. Therefore the Massey product $\ang{a,b,c,a}$ is not defined over $F$.
		\end{proof}
  
      Recall from the Introduction that the DGA $C^{\smallbullet}(F,\Z/2\Z)$ is formal if it is quasi-isomorphic to its cohomology algebra $H^{\smallbullet}(F,\Z/2\Z)$, viewed as a DGA with zero differential. If $C^{\smallbullet}(F,\Z/2\Z)$ is formal, then by \cite[Theorem 3.8]{pal2022real} for all $n\geq 3$ and all $a_1,\dots,a_n\in F^\times$ such that $a_1\cup a_2=a_2\cup a_3=\dots =a_{n-1}\cup a_n=0$, the Massey product $\ang{a_1,\dots,a_n}$ vanishes. Therefore the next theorem, which implies \Cref{mainthm-positselski}, answers Positselski's \Cref{formal-question} affirmatively.
		
		\begin{thm}\label{counter-quadrelli}
			Let $F_0$ be a field of characteristic different from $2$. 
			
			(a) There exist a field extension $F/F_0$ and elements $a,b,c\in F^\times$, independent in $F^\times/F^{\times 2}$, such that $(a,b)=(b,c)=(a,c)=0$ in $\on{Br}(F)$ but the Massey product $\ang{a,b,c,a}$ is not defined over $F$.
			
			(b) There exist a field extension $F/F_0$ and elements $a,b,c,d\in F^\times$, independent in $F^\times/F^{\times 2}$, such that $(a,b)=(b,c)=(a,d)=0$ in $\on{Br}(F)$ but the Massey product $\ang{a,b,c,d}$ is not defined over $F$.
		\end{thm}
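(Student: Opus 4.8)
The plan is to deduce both parts from \Cref{example-not-defined}, so that the only real task is to exhibit the required $2$-special base field together with suitable scalars. For part (a) I would first construct the data over a rational function field and then pass to a $2$-special closure. Concretely, set $E_1\coloneqq F_0(b,y)$ with $b,y$ algebraically independent, put $c\coloneqq 1-b$ and $a\coloneqq 1-(1-b)y^2$. Since $a=(1+y\sqrt{c})(1-y\sqrt{c})$ is a norm from $E_1(\sqrt{c})$, \Cref{cup-norm} gives $(a,c)=0$; on the other hand the tame symbol of $(a,b)$ at the $b$-adic valuation of $E_1$ is the class of $a|_{b=0}=1-y^2=(1-y)(1+y)$, which is a nonsquare in $F_0(y)$, so $(a,b)\neq 0$. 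Finally $a,b,c$ are associate to three pairwise non-associate linear polynomials in $b$ over $F_0(y)$, with distinct roots $\tfrac{y^2-1}{y^2},0,1$, hence they are independent in $E_1^\times/E_1^{\times 2}$.

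Next I would let $E$ be the fixed field of a Sylow pro-$2$ subgroup of $\Gamma_{E_1}$, so that $E$ is $2$-special and $E/E_1$ is a union of finite extensions of odd degree. A restriction--corestriction argument shows that odd-degree extensions are injective on $2$-torsion, both on $\on{Br}(\cdot)[2]$ and on square-class groups: thus $(a,b)$, being a nonzero quaternion class, remains nonzero over $E$, while $(a,c)=0$ and the independence of $a,b,c$ persist. Hence the hypotheses of \Cref{example-not-defined} hold over $E$, and with $X$ the conic attached to $(a,b)$ and $F\coloneqq E(X)$ we obtain $(a,b)=(b,c)=(a,c)=0$ in $\on{Br}(F)$ together with $\ang{a,b,c,a}$ undefined. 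Independence of $a,b,c$ over $F$ is then automatic, because $E$ is algebraically closed in the function field $F=E(X)$ of the geometrically integral curve $X$, so $E^\times/E^{\times 2}\hookrightarrow F^\times/F^{\times 2}$.

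For part (b) I would reduce to part (a) by specialization, after first arranging $\sqrt{-1}\in F$: apply part (a) to $F_0(\sqrt{-1})$ in place of $F_0$, obtaining $F\supseteq F_0(\sqrt{-1})$ with $a,b,c$ independent, $(a,b)=(b,c)=(a,c)=0$, and $\ang{a,b,c,a}$ undefined. Now put $K\coloneqq F(u)$ and $d\coloneqq u^2-a$. Since $d=(u-\sqrt{a})(u+\sqrt{a})$ is a norm from $K(\sqrt{a})$, \Cref{cup-norm} yields $(a,d)=0$, while $(a,b)$ and $(b,c)$ stay zero over $K\supseteq F$; moreover $d$ is a nonconstant irreducible polynomial, so $a,b,c,d$ are independent in $K^\times/K^{\times 2}$. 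Applying \Cref{specialize}(a) to the discrete valuation ring $F[u]_{(u)}$, whose residue field is $F$, the reductions are $a,b,c$ and $\bar d=-a$; because $-1$ is a square we have $\ang{a,b,c,-a}=\ang{a,b,c,a}$ by \Cref{massey-galois-ex}(b), and this is undefined over $F$, so $\ang{a,b,c,d}$ cannot be defined over $K$. Renaming $K$ as $F$ gives part (b).

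The main obstacle is the explicit input to part (a): over an arbitrary $F_0$ one must produce a single configuration $(a,b,c=1-b)$ for which $(a,b)$ is nonsplit, $(a,c)$ splits, and the three classes remain independent, and then ensure that nonsplitness survives the passage to a $2$-special field. The residue computation pinning down $(a,b)\neq 0$ and the restriction--corestriction stability of order-$2$ classes are the crux. By contrast, the reduction of part (b) to part (a) is formal once $\sqrt{-1}$ has been adjoined and the norm element $d=u^2-a$ is used to force $(a,d)=0$ without disturbing independence.
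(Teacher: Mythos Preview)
Your proof is correct and follows the paper's overall architecture: for (a), build explicit data over a purely transcendental extension, pass to a $2$-special closure via restriction--corestriction, and invoke \Cref{example-not-defined}; for (b), adjoin a new variable, define $d$ in terms of it, and specialize via \Cref{specialize}(a) to reduce to part (a). The explicit constructions, however, differ in both parts.

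In (a), the paper takes $a,b$ as independent transcendentals over $F_0$, sets $c=1-b$, and forces $(a,c)=0$ by passing to the function field of the conic attached to $(a,c)$ \emph{before} taking the $2$-closure; nonvanishing of $(a,b)$ is then checked using the known kernel of $\on{Br}(F_1)\to\on{Br}(F_1(C))$. You instead take $b,y$ as transcendentals and set $a=1-(1-b)y^2$, so that $a\in N_c$ and $(a,c)=0$ holds from the outset; this trades the extra function-field passage for a direct residue computation showing $(a,b)\neq 0$. Both routes are clean; yours is marginally shorter.

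In (b), the paper sets $d=ua$ over $L(u)$ and specializes at $u=1$, obtaining $\bar d=a$ directly without any need for $\sqrt{-1}$. You instead adjoin $\sqrt{-1}$ first, set $d=u^2-a$, and specialize at $u=0$ to get $\bar d=-a=a$ in square classes. Your detour through $\sqrt{-1}$ buys something concrete: since $d=u^2-a=N_a(u-\sqrt{a})$, you genuinely verify $(a,d)=0$ as the statement requires, whereas for the paper's choice $d=ua$ neither $(a,d)$ nor $(c,d)$ vanishes in $\on{Br}(L(u))$ (a residue at $u=0$, respectively at the prime $u$, shows this). So your argument for (b), while slightly longer, is more complete on this point.
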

		
		\begin{proof}
			(a) Let $a$ and $b$ be algebraically independent variables over $F_0$, set $F_1\coloneqq F_0(a,b)$, and define $c\coloneqq 1-b\in F_1$. Write $C$ for the smooth projective conic over $F_1$ corresponding to $(a,c)\in \on{Br}(F_1)$, let $F_2 \coloneqq F_1(C)$ and let $F_3$ be a $2$-closure of $F_2$, that is, the subfield of $(F_2)_{\on{sep}}$ fixed by a Sylow $2$-subgroup of $\Gamma_{F_2}$. The field $F_3$ is $2$-special. We have the inclusions
			\[F_0\subset F_1\subset F_2\subset F_3.\]
			In order to complete the proof, it suffices to show that assumptions (1) and (2) of \Cref{example-not-defined} are satisfied by $a,b,c$ over $E=F_3$.
			
			(1) Consider the group homomorphisms
			\[F_1^\times/F_1^{\times 2}\to F_2^\times/F_2^{\times 2}\to F_3^\times/F_3^{\times 2}.\]
			The homomorphism on the left is injective because $F_1$ is algebraically closed in $F_2$, and the homomorphism on the right is injective by a restriction-corestriction argument.
			It is clear that $a,b,c$ are independent in $F_1^\times/F_1^{\times 2}$, hence they are independent in  $F_3^\times/F_3^{\times 2}$.
			
			(2) We have $(a,c)=0$ in $\on{Br}(F_2)$ because $C$ is the conic corresponding to $(a,c)$, hence $(a,c)=0$ in $\on{Br}(F_3)$. Suppose that $(a,b)=0$ in $\on{Br}(F_3)$. Then there exists a finite extension $L/F_2$ of odd degree such that $(a,b)=0$ in $\on{Br}(L)$. Since $[L:F_2]$ is odd, a restriction-corestriction argument shows that the restriction map $\on{Br}(F_2)\to \on{Br}(L)$ is injective, hence $(a,b)=0$ in $\on{Br}(F_2)$. By \cite[Proposition 7.2.4(b)]{colliot2021brauer}, the kernel of the restriction map $\on{Br}(F_1)\to \on{Br}(F_2)$ is generated by $(a,c)$, hence either $(a,b)=0$ or $(a,b)=(a,c)$ (that is, $(a,b(1-b))=0$) in $\on{Br}(F_1)$. Taking residues with respect to the valuation determined by $a$, we see that neither of these equalities can be true, a contradiction. Therefore $(a,b)\neq 0$ in $\on{Br}(F_3)$, as desired.
			
			(b) By (a), there exist a field extension $L/F_0$ and $a,b,c\in L^\times$, independent in $L^\times/L^{\times 2}$, such that $(a,b)=(b,c)=(c,a)=0$ in $\on{Br}(L)$ and the Massey product $\ang{a,b,c,a}$ is not defined. Let $F\coloneqq L(u)$, where $u$ is a variable over $L$, let $R\coloneqq L[u]_{(u-1)}\subset F$, and define $d\coloneqq ua\in L^\times$. Then $a,b,c,d$ belong to $R^\times$, they are independent in $L^\times/L^{\times 2}$, and by \Cref{specialize} the Massey product $\ang{a,b,c,d}$ is not defined over $F$. 
		\end{proof}
 
		\appendix
		
		\section{Lemmas on biquadratic extensions}\label{appendix}
		
		We collect some known results on Galois $(\Z/2\Z)^2$-algebras that are needed for the proofs of Theorems \ref{massey-deg} and \ref{massey-double-deg}. They are all consequences of Hilbert's Theorem~90.
		
		Let $F$ be a field of characteristic different from $2$, let $a,b\in F^\times$, and let \[F_{a,b}\coloneqq F[x_a,x_b]/(x_a^2-a,x_b^2-b)\] be the corresponding \'etale $F$-algebra. We write $(\Z/2\Z)^2=\ang{\sigma_a,\sigma_b}$ and view $F_{a,b}$ as a Galois $(\Z/2\Z)^2$-algebra via 
		\[\sigma_a(x_a)=-x_a,\quad \sigma_a(x_b)=x_b,\qquad \sigma_b(x_a)=x_a,\quad \sigma_b(x_b)=-x_b.\]

		\begin{lemma}\label{chain-lemma}
			Let $u,v\in F^{\times}$. Then $(a,u)=(b,v)$ in $\on{Br}(F)$ if and only if there exist $n_a\in N_a$, $n_b\in N_b$ and $n_{ab}\in N_{ab}$ such that $u=n_an_{ab}$ and $v=n_bn_{ab}$.
		\end{lemma}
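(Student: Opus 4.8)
The plan is to treat the two implications separately, the forward one being a short symbol computation and the reverse one the substance. For the ``if'' direction, suppose $u = n_an_{ab}$ and $v = n_bn_{ab}$ with $n_a\in N_a$, $n_b\in N_b$, $n_{ab}\in N_{ab}$. Expanding by bilinearity of the quaternion symbol, $(a,u) = (a,n_a) + (a,n_{ab}) = (a,n_{ab})$ since $n_a\in N_a$, and likewise $(b,v) = (b,n_{ab})$. Because $n_{ab}\in N_{ab}$ we have $(ab,n_{ab}) = 0$, that is $(a,n_{ab}) = (b,n_{ab})$ using that $\on{Br}(F)[2]$ is $2$-torsion; hence $(a,u) = (b,v)$.

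For the ``only if'' direction I would first reduce the whole statement to producing a single element $n_{ab}\in N_{ab}$ with $u/n_{ab}\in N_a$: granting such an $n_{ab}$, set $n_a \coloneqq u/n_{ab}\in N_a$, and then the equalities $(b,v) = (a,u) = (a,n_{ab}) = (b,n_{ab})$ — where the middle one uses $u/n_{ab}\in N_a$ and the last uses $(ab,n_{ab})=0$ — force $v/n_{ab}\in N_b$ by \Cref{cup-norm}, so that $n_b \coloneqq v/n_{ab}$ completes the factorization with the \emph{same} $n_{ab}$. Thus everything comes down to the sub-claim that $(a,u) = (b,v)$ implies $u\in N_aN_{ab}$.

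To prove the sub-claim I would restrict to $F_b$: since $(b,v)_{F_b} = 0$ (as $b$ becomes a square), the hypothesis gives $(a,u)_{F_b} = 0$, so by \Cref{cup-norm} over $F_b$ we may write $u = N_{F_{a,b}/F_b}(\theta) = \theta\,\sigma_a(\theta)$ for some $\theta\in F_{a,b}^\times$. The key step is then a descent by Hilbert 90: because $u\in F^\times$ is fixed by $\sigma_b$, the element $\lambda \coloneqq \theta/\sigma_b(\theta)$ satisfies $\sigma_a(\lambda) = \sigma_b(\lambda) = \lambda^{-1}$, so $\lambda$ lies in $F_{ab}^\times$ and has norm $1$ over $F$; Hilbert 90 for $F_{ab}/F$ yields $\mu\in F_{ab}^\times$ with $\lambda = \mu/\sigma_a(\mu)$. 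A short manipulation (using that $\sigma_a$ and $\sigma_b$ agree on $F_{ab}$) shows $\psi \coloneqq \theta/\mu$ is fixed by $\sigma_b$, hence $\psi\in F_a^\times$, and substituting $\theta = \psi\mu$ gives $u = \theta\,\sigma_a(\theta) = N_a(\psi)\,N_{ab}(\mu)\in N_aN_{ab}$, as wanted.

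The main obstacle is this descent step: one must convert the a priori relation ``$u$ is a norm from $F_{a,b}$ down to $F_b$'' into the much more rigid factorization $u = N_a(\psi)N_{ab}(\mu)$ through the two genuine quadratic subextensions $F_a$ and $F_{ab}$, and the Hilbert 90 computation on the norm-one element $\lambda$ is exactly what supplies this. A secondary point requiring care is the bookkeeping that a single $n_{ab}$ can be used for both $u$ and $v$; this is guaranteed by the symbol identities above rather than by choosing $n_{ab}$ separately for each. Finally, I would note that the degenerate cases in which one of $a$, $b$, $ab$ is a square in $F$ (so the corresponding quadratic algebra splits) can be checked directly, since then the relevant norm group is all of $F^\times$.
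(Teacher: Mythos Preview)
Your argument is correct, but it takes a genuinely different route from the paper's proof. The paper simply invokes the Common Slot Theorem \cite[Chapter~III, Theorem~4.13]{lam2005introduction}: given $(a,u)=(b,v)$, there exists $w\in F^\times$ with $(a,u)=(a,w)=(b,w)=(b,v)$, and then one reads off $w\in N_{ab}$, $u/w\in N_a$, $v/w\in N_b$ directly from \Cref{cup-norm}. Your proof instead bypasses the Common Slot Theorem entirely: you restrict to $F_b$ to write $u=\theta\,\sigma_a(\theta)$ with $\theta\in F_{a,b}^\times$, and then run a Hilbert~90 descent on the norm-one element $\lambda=\theta/\sigma_b(\theta)\in F_{ab}^\times$ to factor $\theta=\psi\mu$ with $\psi\in F_a^\times$ and $\mu\in F_{ab}^\times$. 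This is essentially a direct proof of the special case of linkage needed here, using only Hilbert~90 for quadratic extensions; it is more self-contained than the paper's citation but correspondingly longer. Your reduction step --- showing that once $u=n_a n_{ab}$ is found, the \emph{same} $n_{ab}$ works for $v$ via the symbol chain $(b,v)=(a,u)=(a,n_{ab})=(b,n_{ab})$ --- is a nice observation that the paper does not need because the Common Slot Theorem hands over the common $w$ immediately. Your treatment of the degenerate cases (one of $a,b,ab$ a square) is also fine.
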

		
		\begin{proof}
			It follows from \Cref{cup-norm} that $(a,n_an_{ab})=(b,n_bn_{ab})$ for all $n_a\in N_a$, $n_b\in N_b$ and $n_{ab}\in N_{ab}$. Conversely, if $(a,u)=(b,v)$ then, by the Common Slot Theorem \cite[Chapter III, Theorem 4.13]{lam2005introduction}, there exists $w\in F^\times$ satisfying
			\[(a,u)=(a,w)=(b,w)=(b,v).\]
			It follows from \Cref{cup-norm} that $w\in N_{ab}$, $u/w\in N_a$ and $v/w\in N_b$, as desired.
		\end{proof}

		\begin{lemma}\label{product-a-b}
			For all $\omega\in F_{a,b}^\times$, we have $(\sigma_a-1)(\sigma_b-1)\omega=1$ if and only if $\omega=\omega_a\omega_b$ for some $\omega_a\in F_a^\times$ and $\omega_b\in F_b^\times$.
		\end{lemma}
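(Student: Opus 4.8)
The plan is to prove both implications directly, deriving the forward one from Hilbert 90, as the section heading anticipates. Throughout I keep the additive convention for the action of $\Gamma_F$, so that $(\sigma_a-1)\omega=\sigma_a(\omega)\omega^{-1}$ and $(\sigma_b-1)\omega=\sigma_b(\omega)\omega^{-1}$ for $\omega\in F_{a,b}^\times$. The first observation is that $F_a=F_{a,b}^{\sigma_b}$ and $F_b=F_{a,b}^{\sigma_a}$, since $\sigma_b$ fixes $\sqrt a$ while $\sigma_a$ fixes $\sqrt b$. In particular $(\sigma_b-1)\omega_a=1$ for every $\omega_a\in F_a^\times$, and $(\sigma_a-1)\omega_b=1$ for every $\omega_b\in F_b^\times$.

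For the reverse implication, suppose $\omega=\omega_a\omega_b$ with $\omega_a\in F_a^\times$ and $\omega_b\in F_b^\times$. Then $(\sigma_b-1)\omega=(\sigma_b-1)\omega_a\cdot(\sigma_b-1)\omega_b=(\sigma_b-1)\omega_b$, and this element lies in $F_b^\times$ because $F_b$ is $\sigma_b$-stable. Since $\sigma_a$ fixes $F_b$ pointwise, applying $(\sigma_a-1)$ yields $(\sigma_a-1)(\sigma_b-1)\omega=1$, as desired.

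For the forward implication, the key move is to set $\theta\coloneqq(\sigma_b-1)\omega=\sigma_b(\omega)\omega^{-1}$. The hypothesis $(\sigma_a-1)\theta=1$ says precisely that $\theta$ is fixed by $\sigma_a$, so $\theta\in F_b^\times$. The essential computation is that $\theta$ has norm $1$ for $F_b/F$: indeed $\sigma_b(\theta)=\sigma_b^2(\omega)\,\sigma_b(\omega)^{-1}=\omega\,\sigma_b(\omega)^{-1}=\theta^{-1}$, whence $N_b(\theta)=\theta\,\sigma_b(\theta)=1$. Now Hilbert 90 applied to the quadratic Galois $F$-algebra $F_b$ produces $\eta\in F_b^\times$ with $\theta=\sigma_b(\eta)\eta^{-1}=(\sigma_b-1)\eta$. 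Consequently $(\sigma_b-1)(\omega\eta^{-1})=\theta\theta^{-1}=1$, so $\omega\eta^{-1}$ is fixed by $\sigma_b$ and therefore lies in $F_a^\times$. Taking $\omega_a\coloneqq\omega\eta^{-1}\in F_a^\times$ and $\omega_b\coloneqq\eta\in F_b^\times$ gives the factorization $\omega=\omega_a\omega_b$.

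The only genuine subtlety is the use of Hilbert 90 when $F_b$ is not a field, i.e.\ when $b$ is a square and $F_b\simeq F\times F$; but this is standard, amounting to the vanishing of $H^1$ of the relevant Galois $\Z/2\Z$-algebra, and in the split case the norm-one condition can even be solved explicitly. Everything else is a routine manipulation of the operators $(\sigma_a-1)$ and $(\sigma_b-1)$.
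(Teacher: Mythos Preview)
Your proof is correct. Both directions are handled cleanly, and the forward implication via $\theta=(\sigma_b-1)\omega$, the observation $N_b(\theta)=1$, and Hilbert~90 for the quadratic Galois $F$-algebra $F_b$ is entirely valid, including in the split case.

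The paper argues differently. It interprets the multiplication map $R_a(\G_{\on{m}})\times R_b(\G_{\on{m}})\to R_{a,b}(\G_{\on{m}})$ as a morphism of tori, identifies its image $T$ (via a character lattice computation) with the subtorus cut out by the equation $(\sigma_a-1)(\sigma_b-1)\omega=1$, and then uses the exact sequence $1\to\G_{\on{m}}\to R_a(\G_{\on{m}})\times R_b(\G_{\on{m}})\to T\to 1$ together with Hilbert~90 in the form $H^1(F,\G_{\on{m}})=0$ to conclude surjectivity on $F$-points. So both proofs rest on Hilbert~90, but the paper packages it cohomologically at the level of tori, while you run the cocycle argument by hand at the level of elements. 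Your route is more elementary and fully self-contained; the paper's route is more structural, makes the underlying exact sequence visible, and fits the torus-theoretic viewpoint used elsewhere in the paper (e.g.\ in \S\ref{splitting-varieties-sec}).
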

		
		\begin{proof}
			See \cite[Theorem 4]{dwilewicz2007hilbert}. Let $T\subset R_{a,b}(\G_{\on{m}})$ be the torus image of the multiplication map \[\mu\colon R_a(\G_{\on{m}})\times R_b(\G_{\on{m}})\to R_{a,b}(\G_{\on{m}}).\] A character lattice computation shows that the torus $T$ is defined by the equation $(\sigma_a-1)(\sigma_b-1)\omega=1$ inside $R_{a,b}(\G_{\on{m}})$. Therefore, for all $\omega\in F_{a,b}^\times$, we have $\omega\in T(F)$ if and only if $(\sigma_a-1)(\sigma_b-1)\omega=1$. We have a short exact sequence
			\[1\to \G_{\on{m}}\to R_a(\G_{\on{m}})\times R_b(\G_{\on{m}})\xrightarrow{\mu} T\to 1.\]
			Passing to $F$-points, we see that an element $\omega\in F_{a,b}^\times$ belongs to $T(F)$ if and only if $\omega=\omega_a\omega_b$ for some $\omega_a\in F_a^\times$ and $\omega_b\in F_b^\times$.
		\end{proof}
		
		\begin{lemma}\label{biquadratic-triple-norm}
	(1) Let $\rho \in F_{a,b}^\times$. Then $N_{a,b}(\rho)\in F^{\times 2}$ if and only if $\rho\in F_a^\times F_b^\times F_{ab}^\times$.		
   
   (2) Let $u\in F^{\times}$. Then $u\in N_aN_{b}N_{ab}$ if and only if $u^2\in N_{a,b}$.
		\end{lemma}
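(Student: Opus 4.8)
The plan is to deduce (2) from (1) together with the elementary observation that $-1\in N_aN_bN_{ab}$, and to prove (1) by reducing its nontrivial implication to the two-factor statement of \Cref{product-a-b} via a single application of Hilbert 90 for the quadratic extension $F_{ab}/F$. Throughout I write $\tau\coloneqq\sigma_a\sigma_b$, the generator of $\on{Gal}(F_{a,b}/F_{ab})$, and I note the restriction of $\sigma_a$ to $F_{ab}$ is the nontrivial automorphism of $F_{ab}/F$.

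First I would record the basic identity that for $\omega\in F_a^\times$ one has $N_{a,b}(\omega)=N_a(\omega)^2$: indeed $\omega$ is fixed by $\sigma_b$, so $N_{a,b}(\omega)=(\omega\,\sigma_a(\omega))^2$; the analogous identities hold over $F_b$ and $F_{ab}$. This immediately gives the ``if'' direction of (1) (each factor of $\rho\in F_a^\times F_b^\times F_{ab}^\times$ has square norm) and the ``only if'' direction of (2) (if $u=N_a(\omega_a)N_b(\omega_b)N_{ab}(\omega_{ab})$ then $u^2=N_{a,b}(\omega_a\omega_b\omega_{ab})$). For the ``if'' direction of (2), granting (1), I write $u^2=N_{a,b}(\rho)$ with $\rho=\omega_a\omega_b\omega_{ab}$, so $u=\pm N_a(\omega_a)N_b(\omega_b)N_{ab}(\omega_{ab})$; both signs lie in $N_aN_bN_{ab}$ because $N_a(\sqrt a)=-a$, $N_b(\sqrt b)=-b$, $N_{ab}(\sqrt{ab})=-ab$ force $-1=(-a)(-b)(-ab)(ab)^{-2}\in N_aN_bN_{ab}$. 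Thus everything reduces to the ``only if'' direction of (1).

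For that implication, starting from $N_{a,b}(\rho)=f^2$ with $f\in F^\times$, the key device is the element $\theta\coloneqq(\sigma_a-1)(\sigma_b-1)\rho=\rho\,\tau(\rho)/(\sigma_a(\rho)\sigma_b(\rho))$. A direct check gives $\tau(\theta)=\theta$, so $\theta\in F_{ab}^\times$, together with the identity $N_{a,b}(\rho)=\theta\cdot(\sigma_a(\rho)\sigma_b(\rho))^2$. Setting $\zeta\coloneqq f/(\sigma_a(\rho)\sigma_b(\rho))$, one verifies $\tau(\zeta)=\zeta$, so $\zeta\in F_{ab}^\times$, while $N_{ab}(\zeta)=f^2/N_{a,b}(\rho)=1$ and $\theta=\zeta^2$. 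This is precisely where the square-norm hypothesis is used, and it is the step I expect to be the main obstacle to set up cleanly: the obstruction $\theta$ to factoring $\rho$ over $F_a$ and $F_b$ is not merely a norm-one element of $F_{ab}^\times$ but the \emph{square} of one.

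Finally I would trivialize $\theta$ by modifying $\rho$ by an $F_{ab}^\times$-factor. By Hilbert 90 for $F_{ab}/F$, the norm-one element $\zeta^{-1}$ can be written as $\zeta^{-1}=(\sigma_a-1)\omega_{ab}$ for some $\omega_{ab}\in F_{ab}^\times$. For elements of $F_{ab}^\times$ one has $\sigma_b(\omega_{ab})=\sigma_a(\omega_{ab})$, whence $(\sigma_a-1)(\sigma_b-1)\omega_{ab}=((\sigma_a-1)\omega_{ab})^{-2}=\zeta^{2}$. Since $(\sigma_a-1)(\sigma_b-1)$ is a homomorphism, it follows that $(\sigma_a-1)(\sigma_b-1)(\rho/\omega_{ab})=\theta/\zeta^{2}=1$. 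Now \Cref{product-a-b} yields $\rho/\omega_{ab}=\omega_a\omega_b$ with $\omega_a\in F_a^\times$ and $\omega_b\in F_b^\times$, so $\rho=\omega_a\omega_b\omega_{ab}\in F_a^\times F_b^\times F_{ab}^\times$, completing the proof of (1) and hence of the lemma.
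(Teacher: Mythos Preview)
Your proposal is correct and follows essentially the same route as the paper: both proofs reduce the ``only if'' direction of (1) to \Cref{product-a-b} by producing a norm-one element of $F_{ab}^\times$ from the square-norm hypothesis, applying Hilbert~90 for $F_{ab}/F$, and checking that dividing $\rho$ by the resulting $F_{ab}^\times$-factor kills $(\sigma_a-1)(\sigma_b-1)\rho$; part (2) is then deduced from (1) via $-1\in N_aN_bN_{ab}$ exactly as you do. Your $\zeta=f/(\sigma_a(\rho)\sigma_b(\rho))$ is in fact equal to the paper's $\omega=N_{F_{a,b}/F_{ab}}(\rho)/x$ (the product $\omega\zeta$ equals $\theta=\zeta^2$), so even the auxiliary element coincides; the only cosmetic difference is that you apply Hilbert~90 to $\zeta^{-1}$ rather than $\zeta$ and you frame $\theta$ explicitly as the obstruction whose being a \emph{square} of a norm-one element is the crux.
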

		
		\begin{proof}
    (1) Suppose first that $\rho=\rho_a\rho_b\rho_{ab}$, where $\rho_a\in F_a^\times$, $\rho_b\in F_b^\times$ and $\rho_{ab}\in F_{ab}^\times$. Then 
    \[N_{a,b}(\rho)=N_{a,b}(\rho_a\rho_b\rho_{ab})=N_a(\rho_a)^2N_b(\rho_b)^2N_{ab}(\rho_{ab})^2\in F^{\times 2}.\]
    Conversely, suppose that $N_{a,b}(\rho)=x^2$ for some $x\in F^\times$. Let \[\omega\coloneqq N_{F_{a,b}/F_{ab}}(\rho)/x \in F_{ab}^\times.\] Then $N_{ab}(\omega)=N_{a,b}(\rho)/x^2=1$, hence by Hilbert's Theorem~90 there is $\rho_{ab}\in F_{ab}^\times$ such that $(1-\sigma_a)\rho_{ab}=\omega$. Since $\sigma_a\sigma_b$ fixes $F_{ab}$, we have 
			\[(\sigma_a-1)(\sigma_b-1)\rho_{ab}=(2-2\sigma_a)\rho_{ab}=\omega^2,\]
   where the first equality follows from the fact that $\sigma_a=\sigma_b$ on $F_{ab}$. On the other hand,
			\[(\sigma_a-1)(\sigma_b-1)\rho=N_{F_{a,b}/F_{ab}}(\rho)^2/N_{a,b}(\rho)=\omega^2,\]
			and hence \[(\sigma_a-1)(\sigma_b-1)(\rho/\rho_{ab})=1.\]
			By \Cref{product-a-b}, we deduce that $\rho=\rho_{ab}\rho_a\rho_b$ for some $\rho_a\in F_a^\times$ and $\rho_b\in F_b^\times$.
  
	(2)		See \cite[Exercise 5.1]{cassels1967algebraic}. Suppose that $u=N_a(\rho_a)N_b(\rho_b)N_{ab}(\rho_{ab})$, where $\rho_a\in F_a^\times$, $\rho_b\in F_b^\times$ and $\rho_{ab}\in F_{ab}^\times$. Then 
			\[N_{a,b}(\rho_a\rho_b\rho_{ab})=N_a(\rho_a)^2N_b(\rho_b)^2N_{ab}(\rho_{ab})^2=u^2.\]
	Conversely, suppose that $u^2=N_{a,b}(\rho)$ for some $\rho\in F_{a,b}^\times$. By (1), there exist $\rho_a\in F_a^\times$, $\rho_b\in F_b^\times$ and $\rho_{ab}\in F_{ab}^\times$ such that $\rho=\rho_a\rho_b\rho_{ab}$. It follows that
    \[u^2=N_{a,b}(\rho)=N_a(\rho_a)^2N_b(\rho_b)^2N_{ab}(\rho_{ab})^2,\]
    hence either $u=N_a(\rho_a)N_b(\rho_b)N_{ab}(\rho_{ab})$ or $u=-N_a(\rho_a)N_b(\rho_b)N_{ab}(\rho_{ab})$. Since $-a\in N_a$, $-b\in N_b$ and $-ab\in N_{ab}$, we have $-1\in N_aN_bN_{ab}$, hence $u\in N_aN_bN_{ab}$ in either case.
		\end{proof}

		\begin{lemma}\label{comes-from-ac}
			Let $\rho\in F_a^\times$ and $\mu\in F^\times_b$ be such that $N_a(\rho)=N_b(\mu)$. Set $d\coloneqq \on{Tr}_a(\rho)+\on{Tr}_b(\mu)$. Suppose that $d\neq 0$. Then:
			
			(1) $\mu d = N_a(\rho+\mu)$,
			
			(2) $N_b(\mu)d^2\in N_{a,b}$, and
			
			(3) $(\mu,a)=(d,a)$ in $\on{Br}(F_b)$.
		\end{lemma}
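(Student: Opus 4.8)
The plan is to prove part (1) by a direct expansion and then obtain (2) and (3) as formal consequences. The one point that must be settled first is the reading of the symbol $N_a(\rho+\mu)$: since $\rho+\mu$ lies in $F_{a,b}$ rather than in $F_a$, here $N_a$ must denote the \emph{relative} norm $F_{a,b}\to F_b$ attached to $\sqrt{a}$, namely $\xi\mapsto \xi\,\sigma_a(\xi)$, where $\sigma_a$ fixes $\sqrt{b}$ and negates $\sqrt{a}$. This agrees with the absolute $N_a$ on the subalgebra $F_a$, so the notation is consistent; getting this interpretation right is the only genuine subtlety.

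For (1), I would use that $\sigma_a(\mu)=\mu$ because $\mu\in F_b$, so that expanding the relative norm gives
\[
N_a(\rho+\mu)=(\rho+\mu)(\sigma_a\rho+\mu)=\rho\,\sigma_a\rho+\mu(\rho+\sigma_a\rho)+\mu^2=N_a(\rho)+\mu\on{Tr}_a(\rho)+\mu^2.
\]
On the other hand, writing $\mu\,\sigma_b(\mu)=N_b(\mu)$ one has
\[
\mu d=\mu\on{Tr}_a(\rho)+\mu\on{Tr}_b(\mu)=\mu\on{Tr}_a(\rho)+\mu^2+N_b(\mu).
\]
The hypothesis $N_a(\rho)=N_b(\mu)$ makes the two right-hand sides coincide, yielding $\mu d=N_a(\rho+\mu)$.

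For (2), I would apply the absolute norm $N_b\colon F_b\to F$ to the identity of (1), so that the full norm $N_{a,b}=N_b\circ N_a$ of $\rho+\mu$ is computed as
\[
N_{a,b}(\rho+\mu)=N_b(\mu d)=N_b(\mu)\,d^2,
\]
using that $d\in F^\times$ forces $N_b(d)=d^2$. Since $\mu\in F_b^\times$ and $d\neq 0$, this value is nonzero, so $\rho+\mu\in F_{a,b}^\times$ and $N_b(\mu)d^2$ lies in the image $N_{a,b}$, as claimed. For (3), I would work over the field $F_b$, in which $a\in F^\times\subset F_b^\times$ and $(F_b)_a=F_{a,b}$: identity (1) exhibits $\mu d$ precisely as a norm from $F_{a,b}$ to $F_b$, so \Cref{cup-norm} (applied over $F_b$) gives $(\mu d,a)=0$ in $\on{Br}(F_b)$. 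As $(\mu d,a)=(\mu,a)+(d,a)$ and these classes are $2$-torsion, this is equivalent to $(\mu,a)=(d,a)$.

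I do not expect a real obstacle here; the entire argument is bookkeeping of the relative and absolute norm and trace maps. The main thing to be careful about is fixing the correct meaning of $N_a(\rho+\mu)$ as a relative norm to $F_b$, after which (1) is a one-line computation and (2) and (3) follow immediately.
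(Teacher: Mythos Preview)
Your proof is correct and follows essentially the same route as the paper: the same direct expansion for (1) (you compare the two sides separately rather than writing a single chain of equalities, but the substitution $N_a(\rho)=N_b(\mu)$ is used identically), then apply $N_b$ to get (2) and invoke \Cref{cup-norm} over $F_b$ for (3). Your explicit remark that $N_a$ here is the relative norm $F_{a,b}\to F_b$ and your check that $\rho+\mu\in F_{a,b}^\times$ are welcome clarifications the paper leaves implicit.
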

		
		\begin{proof}
			We have
        \begin{align*}
            N_a(\rho+\mu)&=(\rho+\mu)(\sigma_a(\rho)+\mu)\\
            &=\rho\sigma_a(\rho)+\rho\mu+\mu\sigma_a(\rho)+\mu^2\\
            &=\mu\sigma_b(\mu)+\rho\mu+\mu\sigma_a(\rho)+\mu^2\\
            &=\mu(\on{Tr}_a(\rho)+\on{Tr}_b(\mu))\\
            &=\mu d.
        \end{align*}
			This proves (1). Taking norms in (1) yields
			\[N_{a,b}(\rho+\mu)=N_b(\mu d)=N_b(\mu)d^2,\]
			which implies (2). Now \Cref{cup-norm} implies that $(\mu d,a)=0$, which is equivalent to (3). 
		\end{proof}

        \begin{lemma}\label{comes-from-ac-cor}
        We have $N_a\cap N_b=N_{a,b}F^{\times 2}$.
        \end{lemma}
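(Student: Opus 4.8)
The plan is to prove the two inclusions separately, the inclusion $N_{a,b}F^{\times 2}\subseteq N_a\cap N_b$ being the routine one. For it I would use transitivity of the norm in the tower $F\subseteq F_a\subseteq F_{a,b}$: since $F_{a,b}/F_a$ has degree $2$, every value $N_{a,b}(\rho)$ equals $N_a(N_{F_{a,b}/F_a}(\rho))$, so $N_{a,b}\subseteq N_a$, and symmetrically $N_{a,b}\subseteq N_b$. As $t^2=N_a(t)=N_b(t)$ for $t\in F^\times$, we have $F^{\times 2}\subseteq N_a\cap N_b$, and since $N_a$ and $N_b$ are subgroups of $F^\times$ this yields $N_{a,b}F^{\times 2}\subseteq N_a\cap N_b$.

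For the reverse inclusion, fix $u\in N_a\cap N_b$ and choose $\rho\in F_a^\times$, $\mu\in F_b^\times$ with $N_a(\rho)=N_b(\mu)=u$. The idea is to feed $\rho$ and $\mu$ into \Cref{comes-from-ac}: if $d\coloneqq\on{Tr}_a(\rho)+\on{Tr}_b(\mu)\neq 0$, then \Cref{comes-from-ac}(2) gives $ud^2=N_b(\mu)d^2\in N_{a,b}$, whence $u=(ud^2)(d^{-1})^2\in N_{a,b}F^{\times 2}$, as desired. The remaining task is to arrange $d\neq 0$. Replacing $\mu$ by $-\mu$ preserves $N_b(\mu)=u$ while flipping the sign of $\on{Tr}_b(\mu)$, so it turns $d$ into $\on{Tr}_a(\rho)-\on{Tr}_b(\mu)$. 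Hence I may assume $d\neq 0$ unless both $\on{Tr}_a(\rho)+\on{Tr}_b(\mu)$ and $\on{Tr}_a(\rho)-\on{Tr}_b(\mu)$ vanish, i.e. (using $\on{char}(F)\neq 2$) unless $\on{Tr}_a(\rho)=\on{Tr}_b(\mu)=0$. Writing $\rho=y\sqrt{a}$ and $\mu=z\sqrt{b}$ with $y,z\in F^\times$ in that situation gives $u=-ay^2=-bz^2$, forcing $ab=(bz/y)^2\in F^{\times 2}$.

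The main obstacle is therefore exactly this degenerate case $ab\in F^{\times 2}$, which I would dispatch directly rather than through \Cref{comes-from-ac}. When $ab$ is a square, $b$ becomes a square in $F_a$, so $F_{a,b}\cong F_a\times F_a$ as $F_a$-algebras; computing $N_{F_{a,b}/F}$ through this splitting shows that its image is $N_a\cdot N_a=N_a$, that is, $N_{a,b}=N_a$. Consequently $N_a\cap N_b\subseteq N_a=N_{a,b}\subseteq N_{a,b}F^{\times 2}$, and combining with the first paragraph gives the claimed equality. I expect the only real subtlety to be this bookkeeping around the vanishing of the trace $d$; every other step is a direct application of the Hilbert 90 lemmas already established in the appendix.
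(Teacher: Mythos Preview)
Your proof is correct. Both you and the paper establish the easy inclusion $N_{a,b}F^{\times 2}\subseteq N_a\cap N_b$ via transitivity of the norm, and both reduce the hard inclusion to \Cref{comes-from-ac}(2) once $d\neq 0$. The difference lies in how the obstruction $d=0$ is handled. The paper first disposes of finite $F$ (where all norm groups equal $F^\times$), and for infinite $F$ observes that the fibre $\{\rho\in F_a^\times:N_a(\rho)=u\}$ is the $F$-point set of a smooth affine conic, hence infinite, so one can pick $\rho$ with $\on{Tr}_a(\rho)\neq 0$ and then adjust the sign of $\rho$. You instead fix $\rho,\mu$ and flip only the sign of $\mu$; if this still leaves $d=0$ you deduce $\on{Tr}_a(\rho)=\on{Tr}_b(\mu)=0$, whence $ab\in F^{\times 2}$, and then treat that case by the splitting $F_{a,b}\cong F_a\times F_a$, which gives $N_{a,b}=N_a$ outright. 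Your route is a bit more elementary---it needs neither the finite/infinite dichotomy nor the conic argument---at the cost of an extra case analysis; the paper's route is more uniform once $F$ is infinite. Either way the argument is short and the key input is the same appendix lemma.
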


    \begin{proof}
    If $F$ is finite, then $N_a=N_b=N_{a,b}=F^\times$, which implies the conclusion. We may thus assume that $F$ is infinite. 
    
    Let $u\in N_{a,b}F^{\times 2}$, and let  $\epsilon\in F_{a,b}^\times$ and $v\in F^\times$ be such that $u=N_{a,b}(\epsilon)v^2$. Then $u=N_a(N_b(\epsilon)v)\in N_a$, and $u=N_b(N_a(\epsilon)v)\in N_b$, hence $u\in N_a\cap N_b$.

    Conversely, let $u\in N_a\cap N_b$. Let $\mu \in F_b^\times$ such that $N_b(\mu)=u$. The solutions $\rho\in F_a^\times$ to $N_a(\rho)=u$ form the set of $F$-points of a smooth affine $F$-conic. Thus, since $F$ is infinite, there exists $\rho \in F_a^\times$ such that $N_a(\rho)=u$ and $\on{Tr}_a(\rho)\neq 0$. Therefore, replacing $\rho$ by $-\rho$ if necessary, we may suppose that $d\coloneqq \on{Tr}_a(\rho)+\on{Tr}_b(\mu)$ is non-zero. By \Cref{comes-from-ac}(2), we have $ud^2\in N_{a,b}$, as desired.
    \end{proof}

\section*{Acknowledgements}
We are grateful to Olivier Wittenberg for many helpful comments on an earlier version of this paper. We thank the anonymous referees for carefully reading our paper and for providing a number of useful comments and suggestions.

\end{document}